\documentclass[preprint,12pt]{elsarticle}

\usepackage{amssymb,amsmath}
\usepackage[margin=1in]{geometry}
\usepackage{color}

\newcommand{\2}{\|_{H^2}}

\newcommand{\be}{\begin{equation}}
\newcommand{\ee}{\end{equation}}
\newcommand{\bee}{\begin{equation*}}
\newcommand{\eee}{\end{equation*}}
\newcommand{\bs}{\begin{split}}
\newcommand{\es}{\end{split}}
\newtheorem{thm}{Theorem}[section]

\newtheorem{lem}{Lemma}[section]

\newtheorem{rmk}{Remark}[section]

\numberwithin{equation}{section}
\journal{Journal des Math\'ematiques Pures et Appliqu\'ees}

\begin{document}

\begin{frontmatter}



\title{Hilbert Expansion of the coupled radiation-Euler in the equilibrium regime}

 \author[1]{Qiangchang Ju}
\address[1]{Institute of Applied Physics and Computational Mathematics, Beijing, P. R. China}
\ead{ju\_qiangchang@iapcm.ac.cn}
\author[2]{Lei Li}
\address[2]{School of Mathematics and Statistics, Fuyang Normal University, Fuyang, 236037, P. R. China}
\ead{202410001@fynu.edu.cn}
\author[3]{Zhengce Zhang \corref{*}}
\address[3]{School of Mathematics and Statistics, Xi'an Jiaotong University, Xi'an, 710049, P. R. China}
\ead{zhangzc@mail.xjtu.edu.cn}
\cortext[*]{Corresponding author.}

\begin{abstract}
 In this paper, we establish the validity of the Hilbert expansion for the coupled radiation-Euler model with non-relativistic source term for general initial data on the torus $\mathbb{T}^{3}$, which leads to the simplified equilibrium-diffusion limit and the initial layer corrections for the radiation intensity coupled with the temperature.
\end{abstract}

\begin{keyword}

Hilbert expansion\sep Radiation-Euler model; General initial data\sep Equilibrium-diffusion limit\sep Initial-layer corrections
\end{keyword}

\end{frontmatter}













\section{Introduction and Main Results}

{\subsection{The coupled radiation-Euler model and main results}
The Euler system coupled with a radiative transfer equation can describe the motion of a compressible inviscid fluid under the presence of the radiative field. Following \cite{bu-8} and \cite{bu-9}, we derive the  non-dimensional equations  as follows
\begin{equation}\label{research equations-x-1}\left\{
\begin{split}
&\partial_{t}\rho+\mathrm{div}_{x}(\rho\vec{u})=0,\\
&\partial_{t}(\rho\vec{u})+\mathrm{div}_{x}(\rho\vec{u}\otimes\vec{u})+\nabla_{x}P
=-\mathcal{P}\vec{S}_{F},\\
&\partial_{t}(\rho E)+\mathrm{div}_{x}(\rho E\vec{u}+P\vec{u})
=-\mathcal{P}\mathcal{C}S_{E},\\
&\frac{1}{\mathcal{C}}\partial_{t}f+\vec w\cdot\nabla_{x}f=S.\\
\end{split}\right.
\end{equation}
Here the unknowns $\rho, \vec u=(u^{1}, u^{2}, u^{3})$ and $E=e+\frac{1}{2}|\vec u|^{2}$ denote the density, velocity, and total energy, respectively. $f=f(t, \vec x, \nu, \vec w)$ is the radiative intensity which depends on time $t\geq 0$, the spatial variable $\vec x\in\mathbb{T}^{3}=(\mathbb{R}\setminus(2\pi\mathbb{Z}))^3$, the frequency $\nu\geq 0$ and direction $\vec w\in\mathbb{S}^{2}$ of photons with $\mathbb{S}^{2}\subset\mathbb{R}^{3}$ being the unit sphere. The pressure $P=P(\rho, \theta)$ and the internal energy $e=e(\rho, \theta)$ are smooth functions of $\rho$ and the temperature $\theta$. 

In \eqref{research equations-x-1}, the radiative source term $S$  is given by
\begin{equation}\label{source-term}
S=\mathcal{L}_{a}\frac{\nu_{0}}{\nu}\sigma_{a}(\nu_{0})\Big(\Big(\frac{\nu}{\nu_{0}}\Big)^{3}B(\nu_{0}, \theta)-f\Big)+\mathcal{L}_{s}\frac{\nu_{0}}{\nu}\sigma_{s}(\nu_{0})\Big(\Big(\frac{\nu}{\nu_{0}}\Big)^{3}\frac{1}{4\pi}\int_{\mathbb{S}^{2}}\frac{\nu_{0}}{\nu'}f(\nu', \vec w')\mathrm{d}\vec w'-f\Big),
\end{equation}
where 
\begin{equation}\nonumber
B(\nu_{0}, \theta)=\frac{15\nu_0^3}{4\pi^5}\Big(e^{\frac{\nu_{0}}{\theta}}-1\Big)^{-1},
\end{equation}
\begin{equation}\nonumber
\nu=\nu_{0}\gamma_{L}(1-\vec{w}\cdot\vec u/\mathcal{C}),\;\; \; \nu'=\nu\frac{1-\vec{w}\cdot\vec u/\mathcal{C}}{1-\vec{w'}\cdot\vec u/\mathcal{C}},\;\;\;\gamma_{L}=1/\sqrt{1-|\vec{u}|^{2}/\mathcal{C}^{2}}.
\end{equation}
Furthermore, the radiative flux $\vec{S}_{F}$ and the radiative energy source $S_{E}$ are defined by
\begin{equation}\nonumber
\vec{S}_{F}=\int_{0}^{\infty}\int_{\mathbb{S}^{2}}\vec{w}S\mathrm{d}\vec w\mathrm{d}\nu,\ \ S_{E}=\int_{0}^{\infty}\int_{\mathbb{S}^{2}}S\mathrm{d}\vec w\mathrm{d}\nu.
\end{equation}

In addition, $\sigma_{a}(\nu_0)$ and $\sigma_{s}(\nu_0)$ represent the absorption and the scattering coefficient, respectively. The non-dimensional parameters $\mathcal{L}_{a}$ and $\mathcal{L}_{s}$ describe the dominant effects of absorption and scattering, respectively. $\mathcal{C}$ is always large parameter for a flow non-relativistic. $\mathcal{P}$ measures the ratio of radiative energy over the internal energy.

In what follows, we take $\nu=\nu_{0}=\nu'$ to derive the non-relativistic source term
 \begin{equation}\nonumber
S=\mathcal{L}_{a}\sigma_{a}[B(\nu, \theta)-f(t, \vec x, \nu, \vec w)]+\mathcal{L}_{s}\sigma_{s}\Big(\frac{1}{4\pi}\int_{\mathbb{S}^{2}}f(t, \vec x, \nu, \vec w)\mathrm{d}\vec w-f(t, \vec x, \nu, \vec w)\Big).
\end{equation}
Moreover, we assume that $\sigma_a$ and $\sigma_s$ are independent of the frequency and make the so-called gray hypothesis.  If we denote $\int_{0}^{\infty}f \mathrm{d}\nu$ by $f$, then $\eqref{research equations-x-1}_{4}$ can be written as
\begin{equation}\nonumber
\begin{aligned}
{\mathcal{C}^{-1}}\partial_{t}f+\vec w\cdot\nabla_{x}f=\mathcal{L}_{a}(B(\theta)-f)
+\mathcal{L}_{s}\Big(\frac{1}{4\pi}\int_{\mathbb{S}^{2}}f\mathrm{d}\vec{w}-f\Big),
\end{aligned}
\end{equation}
where $B(\theta)=C\theta^4$ for some positive constant.  

In order to get the equilibrium-diffusion model, we suppose $$\mathcal{L}_{a}=\epsilon^{-1},\;\;\;\mathcal{L}_{s}=\epsilon,\;\;\;\mathcal{P}=1,\;\;\;\mathcal{C}=\epsilon^{-1},$$
where $\epsilon$ is the Knudsen number. We shall focus on the fluids obeying the ideal polytropic gas relations
$$e=C_V\theta\;\;\;\;P=R\rho\theta,$$
where the specific gas constant $R$ and the specific heat at constant volume $C_V$ are positive constants. We take $\sigma_a,\sigma_s, R$ and $C_V$ to be one for convenience. Therefore the rescaled unknowns $(\rho^\epsilon, \vec{u}^\epsilon, \theta^\epsilon,f^\epsilon)$ satisfy
\begin{equation}\label{research equations}\left\{
\begin{split}
&\partial_{t}\rho^{\epsilon}+\mathrm{div}_{x}(\rho^{\epsilon}\vec{u}^{\epsilon})=0,~~\mathrm{in} \ \ (0,T)\times\mathbb{T}^{3},\\
&\partial_{t}(\rho^{\epsilon}\vec{u}^{\epsilon})+\mathrm{div}_{x}(\rho^{\epsilon}\vec{u}^{\epsilon}\otimes\vec{u}^{\epsilon})+\nabla_{x}P^{\epsilon}
=\Big\langle\Big(\frac{1}{\epsilon}+\epsilon\Big)\vec{w}(f^{\epsilon}-\overline {f^{\epsilon}})\Big\rangle,~~\mathrm{in} \ \ (0,T)\times\mathbb{T}^{3},\\
&\partial_{t}(\rho^{\epsilon}E^{\epsilon})+\mathrm{div}_{x}(\rho^{\epsilon}E^{\epsilon}\vec{u}^{\epsilon}+P^{\epsilon}\vec{u}^{\epsilon})
=\frac{1}{\epsilon^{2}}(\overline {f^{\epsilon}}-B(\theta^{\epsilon})),~~\mathrm{in} \ \ (0,T)\times\mathbb{T}^{3}, \\
&\partial_{t}f^{\epsilon}+\frac{1}{\epsilon}\vec w\cdot \nabla_{x}f^{\epsilon}+f^{\epsilon}-\overline {f^{\epsilon}}=\frac{1}{\epsilon^{2}}(B(\theta^{\epsilon})-f^{\epsilon}),~~\mathrm{in} \ \ (0,T)\times\mathbb{T}^{3}\times \mathbb{S}^{2},\\
&\rho^{\epsilon}(0, \vec{x})=\rho^{0}(\vec x),\;\;\vec{u}^{\epsilon}(0, \vec{x})=\vec{u}^{0}(\vec x),\;\;\theta^{\epsilon}(0, \vec{x})=\theta^{0}(\vec{x})~~\;\mathrm{in}  \ \ \mathbb{T}^{3},\\
&f^{\epsilon}(0, \vec{x}, \vec{w})=h(\vec{x}, \vec{w})~~\;\mathrm{in}  \ \ \mathbb{T}^{3}\times\mathbb{S}^{2},\\
\end{split}\right.
\end{equation}
where $E^{\epsilon}=\theta^{\epsilon}+\frac{1}{2}|\vec{u}^{\epsilon}|^{2}$, $P^{\epsilon}=\rho^{\epsilon}\theta^{\epsilon}$ and $\overline {f^{\epsilon}}=\langle f^{\epsilon}\rangle=\frac{1}{4\pi}\int_{\mathbb{S}^{2}}f^{\epsilon}\mathrm{d}\vec w$.

The main objective of this paper is to study the diffusive limit $(\epsilon\rightarrow 0)$ for the general initial data. For the general initial data, the initial layers would arise. By the Hilbert expansion (see Section \ref{san}), we can derive the zeroth order interior system and initial layers as follows:
\begin{equation}\label{equations about f0 epsilon and theta-0}\left\{
\begin{split}
&f_{0}=\overline {f_{0}}=\theta_{0}^{4},\\
&\partial_{t}\rho_{0}+\mathrm{div}(\rho_{0}\vec u_{0})=0,~~\mathrm{in} \ \ (0,T)\times \mathbb{T}^{3},\\
&\partial_{t}(\rho_{0}\vec{u}_{0})+\mathrm{div}_{x}(\rho_{0}\vec{u}_{0}\otimes\vec u_{0})+\nabla_{x}(\rho_{0}\theta_{0})
=-\frac{1}{3}\nabla_{x}\theta_{0}^{4},~~\mathrm{in} \ \ (0,T)\times \mathbb{T}^{3},\\
&\partial_{t}(\rho_{0}E_{0}+\theta_{0}^{4})+\mathrm{div}_{x}(\rho_{0}E_{0}\vec{u}_{0}+\rho_{0}\theta_{0}\vec{u}_{0})
-\frac{1}{3}\Delta_{x}\theta_{0}^{4}=0,~~\mathrm{in} \ \ (0,T)\times\mathbb{T}^{3}, \\
&\rho_{0}(0, \vec{x})=\rho^{0}(\vec x),\;\;\vec{u}_{0}(0, \vec{x})=\vec{u}^{0}(\vec x)~~\mathrm{in}  \ \ \mathbb{T}^{3},\\
&\theta_{0}(0, \vec{x})=\theta_{0}^{0}(\vec{x}),\;\;\overline {f_{0}}(0, \vec{x})=(\theta_{0}^{0})^{4}~~\mathrm{in}  \ \ \mathbb{T}^{3},
\end{split}\right.
\end{equation}
and
\begin{equation}\label{equations about f I0,theta I0-jiajia}\left\{
\begin{split}
&f_{I, 0}=\hat{f}_{0}-(\theta_{0}^{0})^{4}, \theta_{I, 0}=\hat\theta_{0}-\theta_{0}^{0},\\
&\frac{\partial\hat{f}_{0}}{\partial\tau}+\hat{f}_{0}=B(\hat\theta_{0}),\\
&\frac{\partial\hat\theta_{0}}{\partial\tau}=\frac{1}{\rho^{0}}(\overline{\hat{f}_{0}}-B(\hat\theta_{0})),\\
&\hat{f}_{0}(0, \vec{x}, \vec{w})=h(\vec{x}, \vec{w}), \hat\theta_{0}(0, \vec{x})=\theta^{0}(\vec x),\\
&\lim_{\tau\rightarrow\infty}\hat{f}_{0}(\tau, \vec{x}, \vec{w})=(\theta_{0}^{0}(\vec x))^{4},\ \ \lim_{\tau\rightarrow\infty}\hat\theta_{0}(\tau, \vec{x}, \vec{w})=\theta_{0}^{0}(\vec x),
\end{split}\right.
\end{equation}
where $\tau=t/\epsilon^{2}$, $E_{0}=\theta_{0}+\frac{1}{2}|\vec{u}_{0}|^{2}$ and $\theta_{0}^{0}(\vec x)$ is the initial data of $\theta_{0}(t, \vec x)$ and will be determined by the given initial data $\rho^{0}, \theta^{0}$ and $h$ by studying \eqref{equations about f I0,theta I0-jiajia}. Furthermore, the zeroth order initial layers of $\rho^\epsilon$ and $\vec{u}^{\epsilon}$ won't appear, which can be seen in Section \ref{san} for detailed discussions.

The main theorem in the paper is as follows.

{\thm \label{mainthm}
Assume that $h(\vec{x}, \vec{w})\in L^{\infty}(\mathbb{S}^{2}; H^{2N+2}(\mathbb{T}^{3}))$ and $\rho^{0}(\vec x), \vec{u}^{0}(\vec x), \theta^{0}(\vec{x})\in H^{2N+2}\\(\mathbb{T}^{3})$ with $h(\vec{x}, \vec{w})>0, \rho^{0}(\vec x), \theta^{0}(\vec x)\geq a$ and $\|h-(\theta^{0})^{4}\|_{L^{\infty}(\mathbb{S}^{2}; H^{2N+2}(\mathbb{T}^{3}))}\leq \eta$, where $N\geq 17$, $a$ and $\eta$ are positive constants, and $\eta$ is suitably small and independent of $\epsilon$. There exists a $T>0$ independent of $\epsilon$, such that the problem (\ref{research equations}) has a unique solution $(f^{\epsilon}, \rho^{\epsilon}, \vec{u}^{\epsilon}, \theta^{\epsilon})\in C([0, T]; L^{2}(\mathbb{S}^{2}; H^{3}(\mathbb{T}^{3})))\cap L^{\infty}((0, T)\times\mathbb{T}^{3}\times\mathbb{S}^{2})\times C([0, T]; H^{3}(\mathbb{T}^{3}))\times C([0, T]; H^{3}(\mathbb{T}^{3}))\times C([0, T]; H^{3}(\mathbb{T}^{3}))$. Furthermore, the unique solution $(f^{\epsilon}, \rho^{\epsilon}, \vec{u}^{\epsilon}, \theta^{\epsilon})$ satisfies
\begin{equation}\label{jielunbudengshi 1}
\|f^{\epsilon}-\theta_{0}^{4}-f_{I, 0}\|_{L^{\infty}((0, T)\times\mathbb{T}^{3}\times\mathbb{S}^{2})}=O(\epsilon),
\end{equation} 
\begin{equation}\label{jielunbudengshi 2}
\|\theta^{\epsilon}-\theta_{0}-\theta_{I, 0}\|_{C([0, T]; H^{3}(\mathbb{T}^{3}))}=O(\epsilon),
\end{equation}
and
\begin{equation}\label{jielunbudengshi 3}
\|(\rho^{\epsilon}-\rho_{0}, \vec{u}^{\epsilon}-\vec{u}_{0})\|_{C([0, T]; H^{3}(\mathbb{T}^{3}))}
=O(\epsilon),
\end{equation}
where $(\rho_{0}, \vec{u}_{0}, \theta_{0})\in (C([0, T]; H^{2N+2}(\mathbb{T}^{3}))\times (C([0, T]; H^{2N+2}(\mathbb{T}^{3}))\times (C([0, T]; H^{2N+2}(\mathbb{T}^{3}))\cap L^{2}(0, T; H^{2N+3}(\mathbb{T}^{3})))$ satisfies \eqref{equations about f0 epsilon and theta-0} and the zeroth-order initial layer $(f_{I, 0}, \theta_{I, 0})\in (C^{1}([0, \infty); \\ L^{\infty}(\mathbb{S}^{2}; H^{2N+2}(\mathbb{T}^{3})))\cap L^{1}([0, \infty); L^{\infty}(\mathbb{S}^{2}; H^{2N+2}(\mathbb{T}^{3}))))\times (C^{1}([0, \infty); H^{2N+2}(\mathbb{T}^{3}))\cap L^{1}([0, \infty); \\H^{2N+2}(\mathbb{T}^{3})))$ satisfies \eqref{equations about f I0,theta I0-jiajia}.
}
{\rmk The formal derivation of the equilibrium-diffusion limit of the coupled radiative-Euler model with relativistic source terms \eqref{research equations-x-1} was provided in \cite{bu-8}. In this paper we indeed verify the limit for the simplified case of non-relativistic term. In this case, we miss the additional convection terms due to the relativistic source terms in the equilibrium-diffusion limit equation.}

\subsection{Related work} 
A classical theorem for the diffusive limit of the single neutron transport equation in \cite{lionssidawenzhang} states that the solution $f^{\epsilon}$ to the problem as
\begin{equation}\label{steady neutron transport equation in a unit disk}\left\{
\begin{split}
&\epsilon \vec w\cdot \nabla_{x}f^{\epsilon}+f^{\epsilon}-\overline {f^{\epsilon}}=0\ \ \mathrm{in} \ \ \Omega\times \mathbb{S}^{1},\\
&f^{\epsilon}(\vec{x}_{0}, \vec{w})=g(\vec{x}_{0}, \vec{w})\ \ \mathrm{for} \ \ \vec{w}\cdot\vec{n}<0~~\mathrm{and}~~\vec{x}_{0}\in\partial\Omega,\\
\end{split}\right.
\end{equation}
satisfies
\begin{equation}\label{jieguo1}
\|f^{\epsilon}-\overline{f_{0}}-f_{B, 0}\|_{L^{\infty}(\Omega\times \mathbb{S}^{1})}=O(\epsilon),
\end{equation}
where $\overline{f_{0}}$ is the solution of Laplacian equation and $f_{B, 0}$ is the boundary layer correction defined by the Milne problem. However, when the boundary is non-flat, the convergence stated in \eqref{jieguo1} fails, as demonstrated in \cite{geometric-CMP}. In \cite{geometric-CMP}, a geometric correction $\overline{f_{0}^{\epsilon}}$ and $f_{B, 0}^{\epsilon}$ was constructed to deal with the curvature effects. With this modification, $\|f^{\epsilon}-\overline{f_{0}^{\epsilon}}-f_{B, 0}^{\epsilon}\|_{L^{\infty}}$ is shown to converges to zero as $\epsilon\rightarrow 0$ in the 2D unit disk \cite{geometric-CMP}. Subsequent extensions of this result have been established in the annulus \cite{yuanhuanneutrontransport}, in general 2D convex domains \cite{convex-domain-123, unsteady-neutron-transport-convex-domain, in-flow-boundry-neutron-transport}, and in 3D convex domains \cite{3-d-diffusive-limit-convex-domain, 3-d-zuinanqingxing}.

Recently, there are many results \cite{bu-3, bu-2, bu-4, traceth, bu-1} about the following kind of equilibrium-regime coupled model:
\begin{equation}\label{research equations-dayu0-c1-equilibrium-jia}\left\{
\begin{split}
&{\epsilon}^{2}\partial_{t}f^{\epsilon}+\epsilon \vec w\cdot \nabla_{x}f^{\epsilon}+f^{\epsilon}=B(\theta^{\epsilon}),\ \ (t, \vec x, \vec w)\in \mathbb{R}^+\times \Omega\times \mathbb{S}^{d-1},\\
&\epsilon^{2}\partial_{t}\theta^{\epsilon}-\epsilon^{2}\Delta_{x} \theta^{\epsilon}
=\overline {f^{\epsilon}}-B(\theta^{\epsilon}),\ \ \ \ (t, \vec x)\in \mathbb{R}^+\times \Omega.\\
\end{split}\right.
\end{equation}
Klar and Schmeiser \cite{bu-1} proved the well-posedness of system (\ref{research equations-dayu0-c1-equilibrium-jia}) via fixed point theory. They also established a diffusive limit as $\epsilon\rightarrow 0$, with temperature converging weakly in $H^{1}((0, T)\times\Omega)$ and radiation intensity weak-$\ast$ in $L^{\infty}((0, T)\times\Omega\times\mathbb{S}^{2})$ where $\Omega$ is smooth and bounded. The nonlinear Milne problem for the boundary layer was also treated in \cite{bu-1}, yielding an existence result. They also prevent numerical results in different physical situations. Recently, Ghattassi, Huo and Masmoudi \cite{traceth} proved the global existence of weak solutions for (\ref{research equations-dayu0-c1-equilibrium-jia}) and the convergence of the weak solutions to a nonlinear diffusion model under the diffusive limit, which extended the work by Klar and Schmeiser \cite{bu-1}. Ghattassi, Huo and Masmoudi \cite{traceth-1} studied the diffusive limit of the steady state of (\ref{research equations-dayu0-c1-equilibrium-jia}) for non-homogeneous Dirichlet boundary conditions in a bounded domain with flat boundaries. Considering the arising of the boundary layers, a composite approximate solution is constructed using asymptotic analysis by resorting to the results of \cite{traceth-2} on the boundary layer problem to the steady state of (\ref{research equations-dayu0-c1-equilibrium-jia}). Furthermore, Ghattassi, Huo and Masmoudi \cite{traceth-3} studied the diffusive limit of the steady state of (\ref{research equations-dayu0-c1-equilibrium-jia}) in a unit disk with non-flat boundaries by considering the geometric corrections. The models (\ref{research equations-dayu0-c1-equilibrium-jia}) considered in \cite{traceth-1, traceth-2, traceth-3} are the steady case of equilibrium regime. There have been extensive research efforts to derive the diffusive limit of the non-equilibrium regime case of \eqref{research equations-dayu0-c1-equilibrium-jia}, see \cite{yuanmoxing-bu, yuanmoxing-bu-jia2} and the references cited therein. 

All of the above-mentioned works are only concerned on a simplified model of \eqref{research equations-x-1} for the given density and velocity. There are some works \cite{bu-6-jia0, yuanmoxing-bu-jia3jia, yuanmoxing-bu-jia3} about the diffusive limit of the non-equilibrium regime case of \eqref{research equations-dayu0-c1-equilibrium-jia} under the grey assumption or P1 hypothesis. One can also see the study of low Mach number limit of the radiation-hydrodynamic model in \cite{DN1, bujia-4, bujia-3, bujia-5, bujia-1, jia-jia-4, jia-jia-4-jia, bujia-6}. In this paper, we justify rigorously the equilibrium-diffusion limit of the compressible Euler model coupled with a radiative transfer equation arising in radiation hydrodynamics for the general initial data case.

\subsection{Major difficulties and methods.} The main difficulties arise from the $O(\epsilon^{-2})$ singular source terms which are the corresponding $O(1)$ terms in \cite{yuanmoxing-bu-jia3jia}. Motivated by \cite{traceth-1}, we expand $(f^{\epsilon}, \rho^{\epsilon}, \vec{u}^{\epsilon}, \theta^{\epsilon})$ to $N$-th order, $N\geq 17$, to overcome the singularities arising from $\epsilon$. In the following, we prove the well-posedness of the expansion of interior, initial layer and remainder parts, and get the estimates about $\epsilon$ by careful calculations. First, the close initial assumption $\|h-(\theta^{0})^{4}\|_{L^{\infty}(\mathbb{S}^{2}; H^{2N+2}(\mathbb{T}^{3}))}\leq \eta$, where $\eta$ is suitably small, plays an important role in the proof of the global existence and the exponential decay of the strong initial layers of the radiative intensity coupled with temperature. Second, the main step of getting the convergence rates of $(f^{\epsilon}, \rho^{\epsilon}, \vec{u}^{\epsilon}, \theta^{\epsilon})$ is estimating the remainders $(f_{r}, \rho_{r}, \vec{u}_{r}, \theta_{r})$ about $\epsilon$. In order to achieve this purpose, we have to get the following estimate
\begin{equation}\label{mubiaoestimate}
\begin{aligned}
&\|V_{r}\|_{L_{t}^{\infty}H_{\vec x}^{3}}^{2}+\|f_{r}\|_{L_{t}^{\infty}L_{\vec w}^{2}H_{\vec x}^{3}}^{2}+\frac{1}{\epsilon^{2}}\|f_{r}-\overline{f_{r}}\|_{L_{t}^{2}L_{\vec w}^{2}H_{\vec x}^{3}}^{2}+\frac{1}{\epsilon^{2}}\|4(\theta^{N})^{3}\theta_{r}-\overline{f_{r}}\|_{L_{t}^{2}H_{\vec x}^{3}}^{2}
\\\leq& C(t)\Big(\frac{1}{\epsilon^{16}}\|(R_{1}, R_{2}, R_{3}, R_{4}, R)\|_{L_{t}^{2}L_{\vec w}^{2}H_{\vec x}^{3}}^{2}\Big),
\end{aligned}
\end{equation}
in Lemma \ref{le4.2}, where $V_{r}=(\rho_{r}, \vec{u}_{r}, \theta_{r})$, and $R_{1}, R_{2}, R_{3}, R_{4}$ and $R$ can be seen as source terms. 

Our main idea is that we apply properties of the commutator operator and the Young's inequality to derive the following kind of inequality:
\begin{equation}\nonumber
\begin{aligned}
&\|D_{x}^{\gamma}V_{r}(t)\|_{L_{\vec x}^{2}}^{2}+\|D_{x}^{\gamma}f_{r}(t)\|_{L_{\vec w}^{2}L_{\vec x}^{2}}^{2}
+\frac{1}{\epsilon^{2}}\int_{0}^{t}\int_{\mathbb{T}^{3}}\int_{\mathbb{S}^{2}}|D_{x}^{\gamma}f_{r}-D_{x}^{\gamma}\overline{f_{r}}|^{2}
\mathrm{d}\vec w\mathrm{d}\vec x\mathrm{d}s\\&+\frac{1}{\epsilon^{2}}\int_{0}^{t}\int_{\mathbb{T}^{3}}|D_{x}^{\gamma}(4(\theta^{N})^{3}\theta_{r}-\overline{f_{r}})|^{2}\mathrm{d}\vec x\mathrm{d}s
\\\leq&C\Big(\frac{1}{\epsilon^{4}}\|(R_{1}, R_{2}, R_{3}, R_{4}, R)\|_{L_{t}^{2}L_{\vec w}^{2}H_{\vec x}^{3}}^{2}+\frac{1}{\epsilon^{2}}\|f_{r}-\overline{f_{r}}\|_{L_{t}^{2}L_{\vec x}^{2}}^{2}\Big)+C(\|V_{r}\|_{L_{t}^{2}L_{\vec x}^{2}}+\|D_{x}^{\gamma}V_{r}\|_{L_{t}^{2}L_{\vec x}^{2}}\\&+\|f_{r}\|_{L_{t}^{2}L_{\vec x}^{2}L_{\vec w}^{2}}^{2}+\|D_{x}^{\gamma}f_{r}\|_{L_{t}^{2}L_{\vec x}^{2}L_{\vec w}^{2}}^{2})+\frac{C}{\epsilon^{2}}(\|f_{r}-\overline{f_{r}}\|_{L_{t}^{2}L_{\vec w}^{2}L_{\vec x}^{2}}^{2}
+\|D_{x}^{\gamma-1}f_{r}-D_{x}^{\gamma-1}\overline{f_{r}}\|_{L_{t}^{2}L_{\vec w}^{2}L_{\vec x}^{2}}^{2})\\&+\frac{C}{\epsilon^{4}}(\|f_{r}\|_{L_{t}^{2}L_{\vec w}^{2}L_{\vec x}^{2}}^{2}+\|D_{x}^{\gamma-1}f_{r}\|_{L_{t}^{2}L_{\vec w}^{2}L_{\vec x}^{2}}^{2}+\|\theta_{r}\|_{L_{t}^{2}L_{\vec w}^{2}}^{2}+\|D_{x}^{\gamma-1}\theta_{r}\|_{L_{t}^{2}L_{\vec x}^{2}}^{2}).
\end{aligned}
\end{equation}
i.e. the $\gamma$-th order derivative estimates can be controlled by the $(\gamma-1)$-th order ones, where $0\leq\gamma\leq 3$. We note that the singular term $\frac{C}{\epsilon^{2}}\|f_{r}-\overline{f_{r}}\|_{L_{t}^{2}L_{\vec w}^{2}L_{\vec x}^{2}}^{2}$ won't appear when $\gamma=0$ in the calculation process. In Section \ref{sec4.2}, we construct the solutions $\{\rho_{r}^{k+1}, \vec{u}_{r}^{k+1}, \theta_{r}^{k+1}, f_{r}^{k+1}\}$, iteratively, and get the following estimate
\begin{equation}\nonumber
\begin{aligned}
&\|f_{r}^{k+1}\|_{L_{T}^{\infty}H^{3}_{\vec x}L_{\vec w}^{2}}+\|V_{r}^{k+1}\|_{L_{T}^{\infty}H_{\vec x}^{3}}+\frac{1}{\epsilon^{2}}\|f_{r}^{k+1}-4(\theta^{N})^{3}\theta_{r}^{k+1}\|_{L_{T}^{2}L_{\vec w}^{2}H_{\vec x}^{3}}+\frac{1}{\epsilon}\|f_{r}^{k+1}-\overline{f_{r}^{k+1}}\|_{L_{T}^{2}L_{\vec w}^{2}H_{\vec x}^{3}}
\\\leq& \frac{C(T)}{\epsilon^{8}}\|(R_{1}, R_{2}, R_{3}, R_{4}, R)\|_{L_{T}^{2}L_{\vec w}^{2}H_{\vec x}^{3}}
\\\leq& \frac{C(T)}{\epsilon^{8}}(\|\theta_{r}^{k}\|_{L_{T}^{\infty}H^{3}_{\vec x}}^{2}+\|\theta_{r}^{k}\|_{L_{T}^{\infty}H^{3}_{\vec x}}^{4}+\epsilon^{N+1})
\leq \frac{C(T)}{\epsilon^{8}}(\epsilon^{2q}+\epsilon^{N+1})
\leq C(T)(\epsilon^{2q-8}+\epsilon^{N-7}),
\end{aligned}
\end{equation}
where $V_{r}^{k}=(\rho_{r}^{k}, \vec{u}_{r}^{k}, \theta_{r}^{k})$, by assuming $\|f_{r}^{k}\|_{L^{\infty}_{T}L^{2}_{\vec w}H^{3}_{\vec x}}+\|V_{r}^{k}\|_{L^{\infty}_{T}H^{3}_{\vec x}}\leq \epsilon^{q}$ and careful calculations. In order to close the energy, we need that $2q-8>q$, $N-7>q$ and $q, N\in\mathbb{Z}$, i.e. $q\geq 9$ and $N>7+q$. So, $N\geq 17$ is required.
{\rmk In contrast to Ghattassi, Huo and Masmoudi \cite{traceth-1}, where 1D boundary points ensure infinite smoothness via elliptic theory and thus low regularity suffices, our problem requires higher initial regularity, which is depleted by the initial layer and further limited by interior regularity constraints. The expansion is of higher order due to stronger coupling, ruling out classical $L^{2}$-$L^{\infty}$ estimates; hence uniform higher-derivative energy estimates for the remainders are necessary for the final $L^\infty$ convergences.
}}

\vskip 3mm

{ Throughout this paper, $C$ denotes a certain positive constant and $C(\cdot)$ and $C_{1}(\cdot)$ are the positive constants depending on the quantity $"\cdot"$. $D_{x}$ and $D_{t}$ represent the operator $\partial_{x_{i}}, i=1, 2, 3$ and $\partial_{t}$, respectively. $H^{k}(\mathbb{T}^{3})=W^{k, 2}(\mathbb{T}^{3})$ denotes the usual Lebesgue space on $\mathbb{T}^{3}$ with norm $\|\cdot\|_{H^{k}(\mathbb{T}^{3})}$. We denote by $L^{p}(0, T; H^{k}(\mathbb{T}^{3}))$ $(1\leq p\leq\infty)$ the space of $L^{p}$ functions on $(0, T)$ with values in $H^{k}(\mathbb{T}^{3})$, and $C^{0}(I, H^{k}(\mathbb{T}^{3}))$ standards for the space of continuous functions on the interval $I$ with values in $H^{k}(\mathbb{T}^{3})$. For simplicity, we use $L_{T}^{p}, L_{\vec w}^{p}, H_{\vec x}^{k}$ to denote $L^{p}(0, T), L^{p}(\mathbb{S}^{2}), H^{k}(\mathbb{T}^{3})$, where $1\leq p\leq\infty$, respectively. For $k\geq 0$ and $k\in\mathbb{Z}$, $C^{k}([0, \infty)\times\mathbb{T}^{3})$ denotes the set satisfying $D_{x}^{l}D_{t}^{r}f\in C([0, \infty)\times\mathbb{T}^{3})$ where $0\leq l+r\leq k$.

The paper is organized as follows. In Section \ref{er}, we discuss the well-posedness of neutron transport equation. In Section \ref{san}, we give the Hilbert expansion of the system and construct the asymptotic expansion. In Section \ref{si}, we prove Theorem \ref{mainthm}. 
}

\section{Well-posedness of the single radiative transfer equation}\label{er}
In this section, we consider the well-posedness of the single radiative transfer equation, which can be written as
\begin{equation}\label{research equation}\left\{
\begin{split}
&{\epsilon}^{2}\frac{\partial I}{\partial t}+\epsilon \vec{w}\cdot \nabla_{x}I+{\epsilon}^{2}(I-\overline I)+I=F(t, \vec{x}, \vec{w})~~\mathrm{in} \ \ (0, \infty)\times\mathbb{T}^{3}\times \mathbb{S}^{2},\\
&I(0, \vec{x}, \vec{w})=h(\vec{x}, \vec{w})~~\mathrm{in} \ \ \mathbb{T}^{3}\times \mathbb{S}^{2}.\\
\end{split}\right.
\end{equation}
{\lem \label{main result}
Assume $F(t, \vec{x}, \vec{w})\in L^{\infty}((0, \infty)\times\mathbb{T}^{3}\times \mathbb{S}^{2}), h(\vec{x}, \vec{w})\in L^{\infty}(\mathbb{T}^{3}\times \mathbb{S}^{2})$ and $F, h\geq 0$. Then there exists a unique nonnegative solution $I(t, \vec{x}, \vec{w})\in L^{\infty}((0, \infty)\times\mathbb{T}^{3}\times \mathbb{S}^{2})$ of the initial value problem of \eqref{research equation} which satisfies
\begin{equation}\label{estimate result}
\|I\|_{L^{\infty}((0, \infty)\times\mathbb{T}^{3}\times \mathbb{S}^{2})}\leq C(\|h\|_{L^{\infty}(\mathbb{T}^{3}\times \mathbb{S}^{2})}+\|F\|_{L^{\infty}((0, \infty)\times \mathbb{T}^{3}\times \mathbb{S}^{2})}),
\end{equation}
where $C$ is independent of $\epsilon$.
}

{\lem \label{lianxujieguo}
For any $0<T_{0}<\infty$ and $m\geq 0$, if $h\in L^{2}(\mathbb{S}^{2}; H^{m}(\mathbb{T}^{3}))$ and $F\in L^{2}(0, T_{0}; L^{2}(\mathbb{S}^{2}; \\H^{m}(\mathbb{T}^{3})))$ the initial value problem of \eqref{research equation} has a unique solution $I(t, \vec{x}, \vec{w})\in C([0, T_{0}]; L^{2}(\mathbb{S}^{2}; \\H^{m}(\mathbb{T}^{3})))$.
}

The proof of the above lemmas is similar as in \cite{yuanmoxing-bu-jia3jia}.

\section{Hilbert Expansion}\label{san}

\subsection{Interior expansion}
We write the equivalent form of \eqref{research equations} as
\begin{equation}\label{research equations-0}\left\{
\begin{split}
&\partial_{t}\rho^{\epsilon}+\vec{u}^{\epsilon}\cdot\nabla_{x}\rho^{\epsilon}+\rho^{\epsilon}\mathrm{div}_{x}\vec{u}^{\epsilon}=0,~~\mathrm{in} \ \ (0,T)\times\mathbb{T}^{3},\\
&\rho^{\epsilon}\partial_{t}\vec{u}^{\epsilon}+\rho^{\epsilon}\vec{u}^{\epsilon}\cdot\nabla_{x}\vec{u}^{\epsilon}+\rho^{\epsilon}\nabla_{x}\theta^{\epsilon}
+\theta^{\epsilon}\nabla_{x}\rho^{\epsilon}
=\Big\langle\Big(\frac{1}{\epsilon}+\epsilon\Big)\vec{w}(f^{\epsilon}-\overline {f^{\epsilon}})\Big\rangle,~~\mathrm{in} \ \ (0,T)\times\mathbb{T}^{3},\\
&\rho^{\epsilon}\partial_{t}\theta^{\epsilon}+\rho^{\epsilon}\vec{u}^{\epsilon}\cdot\nabla_{x}\theta^{\epsilon}+\rho^{\epsilon}\theta^{\epsilon}\mathrm{div}_{x}\vec{u}^{\epsilon}
=\frac{1}{\epsilon^{2}}(\overline {f^{\epsilon}}-B(\theta^{\epsilon}))\\&\hspace{6.2cm}-\Big(\frac{1}{\epsilon}+\epsilon\Big)\langle\vec{w}(f^{\epsilon}-\overline {f^{\epsilon}})\rangle\cdot\vec{u}^{\epsilon},~~\mathrm{in} \ \ (0,T)\times\mathbb{T}^{3}, \\
&\partial_{t}f^{\epsilon}+\frac{1}{\epsilon}\vec w\cdot \nabla_{x}f^{\epsilon}+f^{\epsilon}-\overline {f^{\epsilon}}=\frac{1}{\epsilon^{2}}(B(\theta^{\epsilon})-f^{\epsilon}),~~\mathrm{in} \ \ (0,T)\times\mathbb{T}^{3}\times \mathbb{S}^{2},\\
&\rho^{\epsilon}(0, \vec{x})=\rho^{0}(\vec x),\;\;\vec{u}^{\epsilon}(0, \vec{x})=\vec{u}^{0}(\vec x),\;\;\theta^{\epsilon}(0, \vec{x})=\theta^{0}(\vec{x}),\;\;f^{\epsilon}(0, \vec{x}, \vec{w})=h(\vec{x}, \vec{w})~~\;\mathrm{in}  \ \ \mathbb{T}^{3}\times\mathbb{S}^{2}.\\
\end{split}\right.
\end{equation}

First, we give the interior expansion of $\theta^{\epsilon}$ as follows:
\begin{equation}\label{expansion of theta epsilon}
\rho^{\epsilon}\sim\sum_{k=0}^{N}\epsilon^{k}\rho_{k}, ~~\vec u^{\epsilon}\sim\sum_{k=0}^{N}\epsilon^{k}\vec{u}_{k}, ~~\theta^{\epsilon}\sim\sum_{k=0}^{N}\epsilon^{k}\theta_{k},~~f^{\epsilon}\sim \sum_{k=0}^{N}{\epsilon}^{k}f_{k},
\end{equation}
where $\rho_{k}=\rho_{k}(t, \vec x), \vec{u}_{k}=\vec{u}_{k}(t, \vec x), \theta_{k}=\theta_{k}(t, \vec x)$ and $f_{k}=f_{k}(t, \vec x, \vec w)$.
Then, we get
\begin{equation}\label{expansion of B(theta epsilon)}
\begin{split}
B(\theta^{\epsilon})&\sim(\theta_{0}+\epsilon\theta_{1}+\cdot\cdot\cdot)^{4}
\\&=B_{0}(\theta_{0})+\epsilon B_{1}(\theta_{0}; \theta_{1})+\epsilon^{2}B_{2}(\theta_{0}; \theta_{2})+\cdot\cdot\cdot
\end{split}
\end{equation}
where 
\begin{equation}\label{defBk}
B_{k}(\theta_{0}; \theta_{k}):=\sum_{\substack{
  i+j+l+m=k, \\
  i,j,l,m\geq 0
}}\theta_{i}\theta_{j}\theta_{l}\theta_{m}.
\end{equation}

Define
\begin{equation}\label{suanzi1}
\mathcal{L}_{1}=\mathcal{L}_{1}(\rho^{\epsilon}, \vec{u}^{\epsilon}):=\partial_{t}\rho^{\epsilon}+\vec{u}^{\epsilon}\cdot\nabla_{x}\rho^{\epsilon}+\rho^{\epsilon}\mathrm{div}_{x}\vec{u}^{\epsilon},
\end{equation}

\begin{equation}\label{suanzi2}
\begin{aligned}
\mathcal{L}_{2}=&\mathcal{L}_{2}(\rho^{\epsilon}, \vec{u}^{\epsilon}, \theta^{\epsilon}, f^{\epsilon})\\:=&\epsilon^{2}\rho^{\epsilon}\partial_{t}\vec{u}^{\epsilon}+\epsilon^{2}\rho^{\epsilon}\vec{u}^{\epsilon}\cdot\nabla_{x}\vec{u}^{\epsilon}+\epsilon^{2}\rho^{\epsilon}\nabla_{x}\theta^{\epsilon}
+\epsilon^{2}\theta^{\epsilon}\nabla_{x}\rho^{\epsilon}
-\Big\langle\Big(\epsilon+\epsilon^{3}\Big)\vec{w}(f^{\epsilon}-\overline {f^{\epsilon}})\Big\rangle\\=&\epsilon^{2}\rho^{\epsilon}\partial_{t}\vec{u}^{\epsilon}+\epsilon^{2}\rho^{\epsilon}\vec{u}^{\epsilon}\cdot\nabla_{x}\vec{u}^{\epsilon}+\epsilon^{2}\rho^{\epsilon}\nabla_{x}\theta^{\epsilon}
+\epsilon^{2}\theta^{\epsilon}\nabla_{x}\rho^{\epsilon}
-\Big\langle\Big(\epsilon+\epsilon^{3}\Big)\vec{w}f^{\epsilon}\Big\rangle,
\end{aligned}
\end{equation}

\begin{equation}\label{suanzi3}
\begin{aligned}
\mathcal{L}_{3}=&\mathcal{L}_{3}(\rho^{\epsilon}, \vec{u}^{\epsilon}, \theta^{\epsilon}, f^{\epsilon})\\:=&\epsilon^{2}\rho^{\epsilon}\partial_{t}\theta^{\epsilon}+\epsilon^{2}\rho^{\epsilon}\vec{u}^{\epsilon}\cdot\nabla_{x}\theta^{\epsilon}+\epsilon^{2}\rho^{\epsilon}\theta^{\epsilon}\mathrm{div}_{x}\vec{u}^{\epsilon}
-(\overline {f^{\epsilon}}-B(\theta^{\epsilon}))+\Big(\epsilon+\epsilon^{3}\Big)\langle\vec{w}(f^{\epsilon}-\overline {f^{\epsilon}})\rangle\cdot\vec{u}^{\epsilon}
\\=&\epsilon^{2}\rho^{\epsilon}\partial_{t}\theta^{\epsilon}+\epsilon^{2}\rho^{\epsilon}\vec{u}^{\epsilon}\cdot\nabla_{x}\theta^{\epsilon}+\epsilon^{2}\rho^{\epsilon}\theta^{\epsilon}\mathrm{div}_{x}\vec{u}^{\epsilon}
-(\overline {f^{\epsilon}}-B(\theta^{\epsilon}))+\Big(\epsilon+\epsilon^{3}\Big)\langle\vec{w}f^{\epsilon}\rangle\cdot\vec{u}^{\epsilon}
\end{aligned}
\end{equation}
and
\begin{equation}\label{suanzi4}
\mathcal{L}_{4}=\mathcal{L}_{4}(f^{\epsilon}, \theta^{\epsilon}):=\epsilon^{2}\partial_{t}f^{\epsilon}+\epsilon \vec w\cdot \nabla_{x}f^{\epsilon}+{\epsilon}^{2}(f^{\epsilon}-\overline {f^{\epsilon}})+f^{\epsilon}-B(\theta^{\epsilon}).
\end{equation}
Plugging \eqref{expansion of theta epsilon} into the above formulas gives
\begin{equation}\label{resi1}
\begin{aligned}
&\mathcal{L}_{1}\Big(\sum_{k=0}^{N}{\epsilon}^{k}\rho_{k}, \displaystyle\sum_{k=0}^{N}\epsilon^{k}\vec{u}_{k}\Big)=\displaystyle\sum_{k=0}^{N}{\epsilon}^{k}(\partial_{t}\rho_{k}+\vec{u}_{k}\cdot\nabla_{x}\rho_{0}+\vec{u}_{0}\cdot\nabla_{x}\rho_{k}+\rho_{0}\mathrm{div}\vec{u}_{k}
+\rho_{k}\mathrm{div}\vec{u}_{0}\\&+\sum_{\substack{
  i+j=k, \\
  i,j\geq 1
}}\vec{u}_{i}\cdot\nabla_{x}\rho_{j}
+\displaystyle\sum_{\substack{
  i+j=k, \\
  i,j\geq 1
}}\rho_{j}\mathrm{div}\vec{u}_{i})+\displaystyle\sum_{k=N+1}^{2N}\epsilon^{k}(\sum_{\substack{
  i+j=k, \\
  0\leq i,j\leq N
}}\vec{u}_{i}\cdot\nabla_{x}\rho_{j}
+\displaystyle\sum_{\substack{
  i+j=k, \\
  0\leq i,j\leq N
}}\rho_{j}\mathrm{div}\vec{u}_{i}),
\end{aligned}
\end{equation}

\begin{equation}\label{resi2}
\begin{aligned}
&\mathcal{L}_{2}\Big(\sum_{k=0}^{N}{\epsilon}^{k}\rho_{k}, \sum_{k=0}^{N}\epsilon^{k}\vec{u}_{k}, \sum_{k=0}^{N}\epsilon^{k}\theta_{k}, \sum_{k=0}^{N}{\epsilon}^{k}f_{k}\Big)\\=&\sum_{k=0}^{N}{\epsilon}^{k}(\rho_{0}\partial_{t}\vec{u}_{k-2}+\rho_{k-2}\partial_{t}\vec{u}_{0}+\rho_{0}\vec{u}_{0}\cdot\nabla_{x}\vec{u}_{k-2}
+\rho_{0}\vec{u}_{k-2}\cdot\nabla_{x}\vec{u}_{0}+\rho_{k-2}\vec{u}_{0}\cdot\nabla_{x}\vec{u}_{0}+\rho_{0}\nabla_{x}\theta_{k-2}
\\&+\rho_{k-2}\nabla_{x}\theta_{0}+\theta_{0}\nabla_{x}\rho_{k-2}+\theta_{k-2}\nabla_{x}\rho_{0}
-\langle\vec w(f_{k-1}+{f_{k-3}})\rangle+\sum_{\substack{
  i+j=k-2, \\
  i,j\geq 1
}}(\rho_{i}\partial_{t}\vec{u}_{j})\\&+\sum_{\substack{
  i+j+l=k-2, \\
  0\leq i,j,l\leq k-3
}}\rho_{i}\vec{u}_{j}\cdot\nabla_{x}\vec{u}_{l}+\sum_{\substack{
  i+j=k-2, \\
  0\leq i,j\leq k-3
}}(\rho_{i}\nabla_{x}\theta_{j}+\theta_{j}\nabla_{x}\rho_{i}))
+\sum_{k=N+1}^{2N+2}\epsilon^{k}\Big(\sum_{\substack{
  i+j=k-2, \\
  0\leq i,j\leq N
}}(\rho_{i}\partial_{t}\vec{u}_{j})\\&+\sum_{\substack{
  i+j+l=k-2, \\
  0\leq i,j, l\leq N
}}(\rho_{i}\vec{u}_{j}\cdot\nabla_{x}\vec{u}_{l})
+\sum_{\substack{
  i+j=k-2, \\
  0\leq i,j\leq N
}}(\rho_{i}\nabla_{x}\theta_{j}+\theta_{j}\nabla_{x}\rho_{i})\Big)-\epsilon^{N+1}\langle\vec w f_{N}\rangle-\epsilon^{N+1}\langle\vec w f_{N-2}\rangle\\&-\epsilon^{N+2}\langle\vec w f_{N-1}\rangle-\epsilon^{N+3}\langle\vec w f_{N}\rangle,
\end{aligned}
\end{equation}

\begin{equation}\label{resi3}
\begin{aligned}
&\mathcal{L}_{3}\Big(\sum_{k=0}^{N}{\epsilon}^{k}\rho_{k}, \sum_{k=0}^{N}\epsilon^{k}\vec{u}_{k}, \sum_{k=0}^{N}\epsilon^{k}\theta_{k}, \sum_{k=0}^{N}{\epsilon}^{k}f_{k}\Big)\\=&\sum_{k=0}^{N}{\epsilon}^{k}\Big(\rho_{0}\partial_{t}\theta_{k-2}+\rho_{k-2}\partial_{t}\theta_{0}+\rho_{0}\vec{u}_{0}\cdot\nabla_{x}\theta_{k-2}
+\rho_{0}\vec{u}_{k-2}\cdot\nabla_{x}\theta_{0}+\rho_{0}\theta_{0}\mathrm{div}_{x}\vec{u}_{k-2}+\rho_{k-2}\theta_{0}\mathrm{div}_{x}\vec{u}_{0}\\&+\rho_{0}\theta_{k-2}\mathrm{div}_{x}\vec{u}_{0}
-(\overline{f_{k}}-B_{k}(\theta_{0}; \theta_{k}))+\langle\vec w(f_{k-1}+{f_{k-3}})\rangle\cdot\vec{u}_{0}
+\sum_{\substack{
  i+j=k-1, \\
  j\geq 1
}}\langle \vec w f_{i}\rangle\vec{u}_{j}+\langle \vec w f_{0}\rangle\cdot\vec{u}_{k-1}
\end{aligned}
\end{equation}
\begin{equation}\nonumber
\begin{aligned}
\\&+\sum_{\substack{
  i+j=k-3, \\
  j\geq 1
}}\langle\vec w f_{i}\rangle\cdot\vec{u}_{j}\Big)
+\sum_{k=N+1}^{2N+2}\epsilon^{k}\sum_{\substack{
  i+j=k-2, \\
  i, j\geq 1
}}(\rho_{i}\partial_{t}\theta_{j}+(\partial_{t}\rho_{i})\theta_{j})+\sum_{k=N+1}^{3N}\epsilon^{k}\sum_{\substack{
  i+j+l=k-2, \\
 0\leq i,j,l\leq k-3
}}(\rho_{i}\vec{u}_{j}\cdot\nabla_{x}\theta_{l}\\&+\rho_{i}\theta_{l}\mathrm{div}_{x}\vec{u}_{j})+\sum_{k=N+1}^{2N+1}\epsilon^{k}\sum_{\substack{
  i+j=k-1, \\
  0\leq i, j\geq N
}}\langle\vec{w}f_{i}\rangle\cdot\vec{u}_{j}
+\sum_{k=N+1}^{2N+3}\epsilon^{k}\sum_{\substack{
  i+j=k-3, \\
  0\leq i, j\geq N
}}\langle\vec{w}f_{i}\rangle\cdot\vec{u}_{j}
\end{aligned}
\end{equation}
and
\begin{equation}\label{resi4}
\begin{aligned}
&\mathcal{L}_{4}\Big(\sum_{k=0}^{N}{\epsilon}^{k}f_{k}, \sum_{k=0}^{N}\epsilon^{k}\theta_{k}\Big)=\sum_{k=0}^{N}{\epsilon}^{k}(\partial_{t}f_{k-2}+\vec{w}\cdot\nabla_{x}f_{k-1}
+f_{k-2}-\overline{f_{k-2}}+f_{k}-B_{k}
\\&+\epsilon^{N+1}\partial_{t}f_{N-1}+\epsilon^{N+2}\partial_{t}f_{N}+\epsilon^{N+1}\vec{w}\cdot\nabla_{x}f_{N}
-\sum_{k=N+1}^{4N}\epsilon^{k}\sum_{\substack{
  i+j+l+m=k, \\
  0\leq i,j,l,m\leq N
}}\theta_{i}\theta_{j}\theta_{l}\theta_{m},
\end{aligned}
\end{equation}
where $(\rho_{k}, \vec{u}_{k}, \theta_{k}, f_{k})$, $k<0$ are taken to be zero.

Upon collecting terms of the same order, we arrive at
\begin{equation}\label{rhok}
\begin{aligned}
&\partial_{t}\rho_{k}+\vec{u}_{k}\cdot\nabla_{x}\rho_{0}+\vec{u}_{0}\cdot\nabla_{x}\rho_{k}+\rho_{0}\mathrm{div}\vec{u}_{k}
+\rho_{k}\mathrm{div}\vec{u}_{0}+\sum_{\substack{
  i+j=k, \\
  i,j\geq 1
}}(\vec{u}_{i}\cdot\nabla_{x}\rho_{j}
+\rho_{j}\mathrm{div}\vec{u}_{i})=0,
\end{aligned}
\end{equation}

\begin{equation}\label{uk}
\begin{aligned}
&\rho_{0}\partial_{t}\vec{u}_{k-2}+\rho_{k-2}\partial_{t}\vec{u}_{0}+\rho_{0}\vec{u}_{0}\cdot\nabla_{x}\vec{u}_{k-2}
+\rho_{0}\vec{u}_{k-2}\cdot\nabla_{x}\vec{u}_{0}+\rho_{0}\nabla_{x}\theta_{k-2}+\rho_{k-2}\nabla_{x}\theta_{0}\\&+\theta_{0}\nabla_{x}\rho_{k-2}+\theta_{k-2}\nabla_{x}\rho_{0}
-\langle\vec w(f_{k-1}+{f_{k-3}})\rangle+\sum_{\substack{
  i+j=k-2, \\
  i,j\geq 1
}}(\rho_{i}\partial_{t}\vec{u}_{j})+\sum_{\substack{
  i+j+l=k-2, \\
 0\leq i,j,l\leq k-3
}}\rho_{i}\vec{u}_{j}\cdot\nabla_{x}\vec{u}_{l}\\&+\sum_{\substack{
  i+j=k-2, \\
  1\leq i,j\leq k-3
}}(\theta_{i}\nabla_{x}\rho_{j}+\rho_{j}\nabla_{x}\theta_{i})=0,
\end{aligned}
\end{equation}

\begin{equation}\label{thetak}
\begin{aligned}
&\rho_{0}\partial_{t}\theta_{k-2}+\rho_{k-2}\partial_{t}\theta_{0}+\rho_{0}\vec{u}_{0}\cdot\nabla_{x}\theta_{k-2}
+\rho_{0}\vec{u}_{k-2}\cdot\nabla_{x}\theta_{0}+\rho_{k-2}\vec{u}_{0}\cdot\nabla_{x}\theta_{0}+\rho_{0}\theta_{0}\mathrm{div}_{x}\vec{u}_{k-2}\\&+\rho_{k-2}\theta_{0}\mathrm{div}_{x}\vec{u}_{0}+\rho_{0}\theta_{k-2}\mathrm{div}_{x}\vec{u}_{0}
-(\overline{f_{k}}-B_{k}(\theta_{0}; \theta_{k}))+\langle\vec w(f_{k-1}+{f_{k-3}})\rangle\cdot\vec{u}_{0}+\langle \vec w f_{1}\rangle\cdot\vec{u}_{k-2}
\\&+\sum_{\substack{
  i+j=k-1, \\
  1\leq j\leq k-2
}}\langle \vec w f_{i}\rangle\cdot\vec{u}_{j}+\langle \vec w f_{0}\rangle\cdot\vec{u}_{k-1}-\sum_{\substack{
  i+j=k-3, \\
  j\geq 1
}}\langle\vec w f_{i}\rangle\cdot\vec{u}_{j}=0
\end{aligned}
\end{equation}
and
\begin{equation}\label{fk}
\begin{aligned}
\partial_{t}f_{k-2}+\vec{w}\cdot\nabla_{x}f_{k-1}
+f_{k-2}-\overline{f_{k-2}}+f_{k}-B_{k}(\theta_{0}; \theta_{k})=0.
\end{aligned}
\end{equation}
For $k\geq 1$, we can rewrite \eqref{rhok} as 
\begin{equation}\label{rhok1}
\begin{aligned}
&\partial_{t}\rho_{k}+\vec{u}_{k}\cdot\nabla_{x}\rho_{0}+\vec{u}_{0}\cdot\nabla_{x}\rho_{k}+\rho_{0}\mathrm{div}\vec{u}_{k}
+\rho_{k}\mathrm{div}\vec{u}_{0}=F_{k-1}^{1},
\end{aligned}
\end{equation}
where
\begin{equation}\label{Fk1}
F_{k-1}^{1}=-\sum_{\substack{
  i+j=k, \\
  i,j\geq 1
}}(\vec{u}_{i}\cdot\nabla_{x}\rho_{j}
+\rho_{j}\mathrm{div}\vec{u}_{i}).
\end{equation}
Combining \eqref{uk} and \eqref{fk}, for $k\geq 1$, we rewrite \eqref{uk} as
\begin{equation}\label{uk-1}
\begin{aligned}
&\rho_{0}\partial_{t}\vec{u}_{k}+\rho_{0}\vec{u}_{0}\cdot\nabla_{x}\vec{u}_{k}
+\rho_{0}\vec{u}_{k}\cdot\nabla_{x}\vec{u}_{0}+\Big(\rho_{0}+\frac{1}{3}\theta_{0}^{3}\Big)\nabla_{x}\theta_{k}+\rho_{k}\nabla_{x}\theta_{0}\\&+\theta_{0}\nabla_{x}\rho_{k}
+\theta_{k}\Big(\nabla_{x}\rho_{0}+\frac{1}{3}\nabla_{x}\theta_{0}^{3}\Big)
+\rho_{k}\partial_{t}\vec{u}_{0}=F_{k-1}^{2},
\end{aligned}
\end{equation}
where
\begin{equation}\nonumber
\begin{aligned}
F_{k-1}^{2}=&-\frac{1}{3}\nabla_{x}\overline{f_{k-2}}+\langle(\vec{w}\otimes\vec{w})\cdot\nabla_{x}(f_{k-2}+\partial_{t}f_{k-2}+\vec{w}\cdot\nabla_{x}f_{k-1})\rangle
+\langle\vec{w}\cdot\nabla_{x}f_{k-1}\rangle\\&-\frac{1}{3}\nabla_{x}\sum_{\substack{
  i+j+l+m=k, \\
  0\leq i,j,l,m\leq k-1
}}(\theta_{i}\theta_{j}\theta_{l}\theta_{m})-\sum_{\substack{
  i+j=k, \\
  i,j\geq 1
}}(\rho_{i}\partial_{t}\vec{u}_{j})-\sum_{\substack{
  i+j+l=k, \\
  0\leq i,j,l\leq k-1
}}\rho_{i}\vec{u}_{j}\cdot\nabla_{x}\vec{u}_{l}\\&-\sum_{\substack{
  i+j=k, \\
 i,j\geq 1
}}(\theta_{i}\nabla_{x}\rho_{j}+\rho_{j}\nabla_{x}\theta_{i}).
\end{aligned}
\end{equation}
Collecting \eqref{thetak} and \eqref{fk}, for $k\geq 1$, we can rewrite \eqref{thetak} as follows
\begin{equation}\label{thetak-1}
\begin{aligned}
&(\rho_{0}+\theta_{0}^{3})\partial_{t}\theta_{k}+\theta_{k}\partial_{t}(\theta_{0}^{3})+\rho_{k}\partial_{t}\theta_{0}+\rho_{0}\vec{u}_{0}\cdot\nabla_{x}\theta_{k}
+\rho_{0}\vec{u}_{k}\cdot\nabla_{x}\theta_{0}+\rho_{k}\vec{u}_{0}\cdot\nabla_{x}\theta_{0}+\rho_{0}\theta_{0}\mathrm{div}_{x}\vec{u}_{k}\\&+\rho_{k}\theta_{0}\mathrm{div}_{x}\vec{u}_{0}+\rho_{0}\theta_{k}\mathrm{div}_{x}\vec{u}_{0}
-4\theta_{0}^{3}\Delta_{x}\theta_{k}-12\theta_{0}^{2}\nabla_{x}\theta_{0}\cdot\nabla_{x}\theta_{k}-4\theta_{k}\Delta_{x}\theta_{0}^{3}-\frac{(\theta_{0})^{3}}{3}\nabla_{x}\theta_{k}\cdot\vec{u}_{0}
\\&-\theta_{k}\theta_{0}^{2}\nabla_{x}\theta_{0}\cdot\vec{u}_{0}+\frac{4\theta_{0}^{3}}{3}\nabla_{x}\theta_{0}\cdot\vec{u}_{k}=F_{k-1}^{3},
\end{aligned}
\end{equation}
where
\begin{equation}\label{Fk-1biaoda}
\begin{aligned}
F_{k-1}^{3}=&-\sum_{\substack{
  i+j+l+m=k, \\
  0\leq i,j,l,m\leq k-1
}}\partial_{t}(\theta_{i}\theta_{j}\theta_{l}\theta_{m})-\partial_{t}^{2}\overline{f_{k-2}}-\langle\vec{w}\cdot\nabla_{x}\partial_{t}f_{k-1}\rangle-\langle(\vec w\cdot\nabla_{x})^{2}(\partial_{t}f_{k-2}\\&+(\vec w\cdot\nabla_{x})f_{k-1}-\overline{f_{k-2}}+f_{k-2})\rangle+\sum_{\substack{
  i+j+l+m=k, \\
  0\leq i,j,l,m\leq k-1
}}\frac{1}{3}\Delta_{x}(\theta_{i}\theta_{j}\theta_{l}\theta_{m})
\\&+\sum_{\substack{
  i+j=k+1, \\
  j\geq 1
}}\langle\vec w (\partial_{t}f_{i-2}+\vec w\cdot\nabla_{x}f_{i-1}-\overline {f_{i-2}}+f_{i-2})\rangle\cdot\vec{u}_{j}+\sum_{\substack{
  i+j=k-1, \\
  j\geq 1
}}\langle\vec w f_{i}\rangle\cdot\vec{u}_{j}
\\&+\langle\vec w\partial_{t}f_{k-1}\rangle\cdot\vec{u}_{0}+\langle\vec w f_{k-1}\rangle\cdot\vec{u}_{0}-\Big(-\frac{1}{3}\nabla_{x}\overline{f_{k-2}}+\langle(\vec{w}\otimes\vec{w})\cdot\nabla_{x}(f_{k-2}+\partial_{t}f_{k-2}
\\&+\vec{w}\cdot\nabla_{x}f_{k-1})\rangle
+\langle\vec{w}\cdot\nabla_{x}f_{k-1}\rangle-\frac{1}{3}\nabla_{x}\sum_{\substack{
  i+j+l+m=k, \\
  0\leq i,j,l,m\leq k-1
}}(\theta_{i}\theta_{j}\theta_{l}\theta_{m})\Big)\cdot\vec{u}_{0}.
\end{aligned}
\end{equation}
For $k=0$, we have
\begin{equation}\label{zero-ini-sys}\left\{
\begin{split}
&f_{0}=\overline {f_{0}}=\theta_{0}^{4},\\
&\partial_{t}\rho_{0}+\mathrm{div}(\rho_{0}\vec u_{0})=0,\\
&\partial_{t}(\rho_{0}\vec{u}_{0})+\mathrm{div}_{x}(\rho_{0}\vec{u}_{0}\otimes\vec u_{0})+\nabla_{x}(\rho_{0}\theta_{0})
=-\frac{1}{3}\nabla_{x}\theta_{0}^{4},\\
&\partial_{t}(\rho_{0}\theta_{0}+\theta_{0}^{4})+\mathrm{div}_{x}(\rho_{0}\theta_{0}\vec{u}_{0})+\rho_{0}\theta_{0}\mathrm{div}_{x}\vec{u}_{0}
-\frac{1}{3}\Delta_{x}\theta_{0}^{4}=\frac{1}{3}\nabla_{x}\theta_{0}^{4}\cdot\vec u_{0},
\end{split}\right.
\end{equation}
which is the equivalent form of $\eqref{equations about f0 epsilon and theta-0}_{1-4}$.

In view of the above equations, the residuals \eqref{resi1}-\eqref{resi4} take the form
\begin{equation}\label{resi1-jia}
\begin{aligned}
\mathcal{L}_{1}\Big(\sum_{k=0}^{N}{\epsilon}^{k}\rho_{k}, \sum_{k=0}^{N}\epsilon^{k}\vec{u}_{k}\Big)=&\sum_{k=N+1}^{2N}\epsilon^{k}\Big(\sum_{\substack{
  i+j=k, \\
  0\leq i,j\leq N
}}\vec{u}_{i}\cdot\nabla_{x}\rho_{j}
+\sum_{\substack{
  i+j=k, \\
  0\leq i,j\leq N
}}\rho_{j}\mathrm{div}\vec{u}_{i}\Big),
\end{aligned}
\end{equation}

\begin{equation}\label{resi2-jia}
\begin{aligned}
&\mathcal{L}_{2}\Big(\sum_{k=0}^{N}{\epsilon}^{k}\rho_{k}, \sum_{k=0}^{N}\epsilon^{k}\vec{u}_{k}, \sum_{k=0}^{N}\epsilon^{k}\theta_{k}, \sum_{k=0}^{N}{\epsilon}^{k}f_{k}\Big)\\=&\sum_{k=N+1}^{2N+2}\epsilon^{k}\Big(\sum_{\substack{
  i+j=k-2, \\
  0\leq i,j\leq N
}}(\rho_{i}\partial_{t}\vec{u}_{j})+\sum_{\substack{
  i+j+l=k-2, \\
  0\leq i,j, l\leq N
}}(\rho_{i}\vec{u}_{j}\cdot\nabla_{x}\vec{u}_{l})+\sum_{\substack{
  i+j=k-2, \\
  0\leq i,j\leq N
}}(\rho_{i}\nabla_{x}\theta_{j}\\&+\theta_{j}\nabla_{x}\rho_{i})\Big)-\epsilon^{N+1}\langle\vec w f_{N}\rangle-\epsilon^{N+1}\langle\vec w f_{N-2}\rangle-\epsilon^{N+2}\langle\vec w f_{N-1}\rangle-\epsilon^{N+3}\langle\vec w f_{N}\rangle,
\end{aligned}
\end{equation}

\begin{equation}\label{resi3-jia}
\begin{aligned}
&\mathcal{L}_{3}\Big(\sum_{k=0}^{N}{\epsilon}^{k}\rho_{k}, \sum_{k=0}^{N}\epsilon^{k}\vec{u}_{k}, \sum_{k=0}^{N}\epsilon^{k}\theta_{k}, \sum_{k=0}^{N}{\epsilon}^{k}f_{k}\Big)\\=&\sum_{k=N+1}^{2N+2}\epsilon^{k}\sum_{\substack{
  i+j=k-2, \\
  i, j\geq 1
}}(\rho_{i}\partial_{t}\theta_{j}+\partial_{t}\rho_{i}\theta_{j})+\sum_{k=N+1}^{3N}\epsilon^{k}\sum_{\substack{
  i+j+l=k-2, \\
 0\leq i,j,l\leq k-3
}}(\rho_{i}\vec{u}_{j}\cdot\nabla_{x}\theta_{l}+\rho_{i}\theta_{l}\mathrm{div}_{x}\vec{u}_{j})
\\&+\sum_{k=N+1}^{2N+1}\epsilon^{k}\sum_{\substack{
  i+j=k-1, \\
  0\leq i, j\geq N
}}\langle\vec{w}f_{i}\rangle\cdot\vec{u}_{j}+\sum_{k=N+1}^{2N+3}\epsilon^{k}\sum_{\substack{
  i+j=k-3, \\
  0\leq i, j\geq N
}}\langle\vec{w}f_{i}\rangle\cdot\vec{u}_{j}
\end{aligned}
\end{equation}
and
\begin{equation}\label{resi4-jia}
\begin{aligned}
\mathcal{L}_{4}\Big(\sum_{k=0}^{N}{\epsilon}^{k}f_{k}, \sum_{k=0}^{N}\epsilon^{k}\theta_{k}\Big)=&\epsilon^{N+1}\partial_{t}f_{N-1}+\epsilon^{N+2}\partial_{t}f_{N}+\epsilon^{N+1}\vec{w}\cdot\nabla_{x}f_{N}
\\&-\sum_{k=N+1}^{4N}\epsilon^{k}\sum_{\substack{
  i+j+l+m=k, \\
  0\leq i,j,l,m\leq N
}}\theta_{i}\theta_{j}\theta_{l}\theta_{m},
\end{aligned}
\end{equation}
where the right hand side are both formally of order $\epsilon^{N+1}$.
\subsection{Initial layer expansion.}
To determine the initial conditions for $\overline {f_{k}}$ and $V_{k}$, we employ the initial-layer expansion. To this end, we introduce a new variable via the scaling transform  $(f^{\epsilon}(t, \vec{x}, \vec w), \rho^{\epsilon}(t, \vec x), \vec{u}^{\epsilon}(t, \vec x), \\\theta^{\epsilon}(t, \vec x))\rightarrow (f^{\epsilon}(\tau, \vec{x}, \vec w), \rho^{\epsilon}(\tau, \vec x), \vec{u}^{\epsilon}(\tau, \vec x), \theta^{\epsilon}(\tau, \vec x))$ with $\tau$ as
$\tau=\frac{t}{\epsilon^{2}},$ where $\tau\in [0, \infty)$. This gives
\begin{equation}\nonumber
\frac{\partial f^{\epsilon}}{\partial t}=\frac{1}{\epsilon^{2}}\frac{\partial f^{\epsilon}}{\partial\tau}, \frac{\partial \rho^{\epsilon}}{\partial t}=\frac{1}{\epsilon^{2}}\frac{\partial \rho^{\epsilon}}{\partial\tau}, \frac{\partial \vec{u}^{\epsilon}}{\partial t}=\frac{1}{\epsilon^{2}}\frac{\partial \vec{u}^{\epsilon}}{\partial\tau}, \frac{\partial \theta^{\epsilon}}{\partial t}=\frac{1}{\epsilon^{2}}\frac{\partial \theta^{\epsilon}}{\partial\tau}.
\end{equation}

Under this change of variables, the system \eqref{research equations} is transformed into
\begin{equation}\label{research equation in the initial layer}\left\{
\begin{split}
&\frac{1}{\epsilon^{2}}\frac{\partial \rho^{\epsilon}}{\partial\tau}+\vec{u}^{\epsilon}\cdot\nabla_{x}\rho^{\epsilon}+\rho^{\epsilon}\mathrm{div}_{x}\vec{u}^{\epsilon}=0,\\
&\frac{\rho^{\epsilon}}{\epsilon^{2}}\frac{\partial \vec u^{\epsilon}}{\partial\tau}+\rho^{\epsilon}\vec{u}^{\epsilon}\cdot\nabla_{x}\vec{u}^{\epsilon}+\rho^{\epsilon}\nabla_{x}\theta^{\epsilon}
+\theta^{\epsilon}\nabla_{x}\rho^{\epsilon}=\Big\langle\Big(\frac{1}{\epsilon}+\epsilon\Big)\vec{w}(f^{\epsilon}-\overline {f^{\epsilon}})\Big\rangle,\\
&\frac{\rho^{\epsilon}}{\epsilon^{2}}\frac{\partial \theta^{\epsilon}}{\partial\tau}+\rho^{\epsilon}\vec{u}^{\epsilon}\cdot\nabla_{x}\theta^{\epsilon}+\rho^{\epsilon}\theta^{\epsilon}\mathrm{div}_{x}\vec{u}^{\epsilon}
=\frac{1}{\epsilon^{2}}(\overline {f^{\epsilon}}-B(\theta^{\epsilon}))-\Big(\frac{1}{\epsilon}+\epsilon\Big)\langle\vec{w}(f^{\epsilon}-\overline {f^{\epsilon}})\rangle\cdot\vec u^{\epsilon},\\
&\frac{\partial f^{\epsilon}}{\partial\tau}+\epsilon \vec w\cdot \nabla_{x}f^{\epsilon}+\epsilon^{2}(f^{\epsilon}-\overline {f^{\epsilon}})+f^{\epsilon}=B(\theta^{\epsilon}).\\
\end{split}\right.
\end{equation}

We define the initial layer expansion as follows:
\begin{equation}\label{expansion about the initial layer}
f_{I}^{\epsilon}\thicksim \sum_{k=0}^{N}\epsilon^{k}f_{I, k}(\tau, \vec{x}, \vec{w}),\;\;\rho_{I}^{\epsilon}\thicksim\sum_{k=0}^{N}\epsilon^{k}\rho_{I, k}(\tau, \vec{x}),\;\;\vec u_{I}^{\epsilon}\thicksim\sum_{k=0}^{N}\epsilon^{k}\vec u_{I, k}(\tau, \vec{x}),\;\;
\theta_{I}^{\epsilon}\thicksim\sum_{k=0}^{N}\epsilon^{k}\theta_{I, k}(\tau, \vec{x}).
\end{equation}
Plugging the expansions $f^{\epsilon}\thicksim\sum_{k=0}^{N}\epsilon^{k}(f_{k}+f_{I, k})$, $\rho^{\epsilon}\thicksim\sum_{k=0}^{N}\epsilon^{k}(\rho_{k}+\rho_{I, k})$, $\vec{u}^{\epsilon}\thicksim\sum_{k=0}^{N}\epsilon^{k}(\vec{u}_{k}+\vec{u}_{I, k})$ and $\theta^{\epsilon}\thicksim\sum_{k=0}^{N}\epsilon^{k}(\theta_{k}+\theta_{I, k})$ into \eqref{research equation in the initial layer} and comparing $\epsilon$-orders gives $\rho_{I, k}, \vec u_{I, k}, \theta_{I, k}$ and $f_{I, k}$. We thus have
\begin{equation}\label{resi1-jia-ini}
\begin{aligned}
&\mathcal{L}_{1}\Big(\sum_{k=0}^{N}\epsilon^{k}(\rho_{k}+\rho_{I, k}), \sum_{k=0}^{N}\epsilon^{k}(\vec{u}_{k}+\vec{u}_{I, k})\Big)\\=&\mathcal{L}_{1}\Big(\sum_{k=0}^{N}\epsilon^{k}\rho_{k}, \sum_{k=0}^{N}\epsilon^{k}\vec{u}_{k}\Big)+\sum_{k=0}^{N}\epsilon^{k}(\partial_{\tau}\rho_{I, k}+\sum_{\substack{
  i+j=k-2, \\
  0\leq i,j\leq k-2
}}\{(\vec{E}_{i}+\vec{u}_{I, i})\cdot\nabla_{x}(A_{j}+\rho_{I, j})
\\&-\vec{E}_{i}\cdot\nabla_{x}A_{j}\}+\sum_{\substack{
  i+j=k-2, \\
  0\leq i,j\leq k-2
}}\{(A_{j}+\rho_{I, j})\mathrm{div}_{x}(\vec{E}_{i}+\vec{u}_{I, i})-A_{j}\mathrm{div}_{x}\vec{E}_{i}\})+Er_{1}+O(\epsilon^{N+1}),
\end{aligned}
\end{equation}

\begin{equation}\label{resi2-jia-ini}
\begin{aligned}
&\mathcal{L}_{2}\Big(\sum_{k=0}^{N}{\epsilon}^{k}(\rho_{k}+\rho_{I, k}), \sum_{k=0}^{N}\epsilon^{k}(\vec{u}_{k}+\vec{u}_{I, k}), \sum_{k=0}^{N}\epsilon^{k}(\theta_{k}+\theta_{I, k}), \sum_{k=0}^{N}{\epsilon}^{k}(f_{k}+f_{I, k})\Big)\\=&\mathcal{L}_{2}\Big(\sum_{k=0}^{N}{\epsilon}^{k}\rho_{k}, \sum_{k=0}^{N}\epsilon^{k}\vec{u}_{k}, \sum_{k=0}^{N}\epsilon^{k}\theta_{k}, \sum_{k=0}^{N}{\epsilon}^{k}f_{k}\Big)+\sum_{k=0}^{N}\epsilon^{k}\Big((A_{0}+\rho_{I, 0})\partial_{\tau}(\vec{E}_{k}+\vec{u}_{I, k})-A_{0}\partial_{\tau}\vec{E}_{k}
\\&+(A_{k}+\rho_{I, k})\partial_{\tau}(\vec {E}_{0}+\vec{u}_{I, 0})-A_{k}\partial_{\tau}\vec{E}_{0}+\sum_{\substack{
  i+j=k-2, \\
  0\leq i,j\leq k-2
}}\{(A_{i}+\rho_{I, i})\partial_{\tau}(\vec{E}_{j}+\vec{u}_{I, j})-A_{i}\partial_{\tau}\vec{E}_{j}\}
\end{aligned}
\end{equation}
\begin{equation}\nonumber
\begin{aligned}
\\&+\sum_{\substack{
  i+j+l=k-2, \\
  0\leq i, j, l\leq k-2
}}\{(A_{i}+\rho_{I, i})(\vec{E}_{j}+\vec{u}_{I, j})\cdot\nabla_{x}(\vec{E}_{l}+\vec{u}_{I, l})-A_{i}\vec{E}_{j}\cdot\nabla_{x}\vec{E}_{l}\}+\sum_{\substack{
  i+j=k-2, \\
  0\leq i,j\leq k-2
}}\{(A_{i}
\\&+\rho_{I, i})\nabla_{x}(D_{j}+\theta_{I, j})-A_{i}\nabla_{x}D_{j}\}+\sum_{\substack{
  i+j=k-2, \\
  0\leq i,j\leq k-2
}}\{(D_{j}+\theta_{I, j})\cdot\nabla_{x}(A_{i}+\rho_{I, i})-D_{j}\cdot\nabla_{x}A_{i}\}
\\&-\langle\vec{w}f_{I, k-1}\rangle-\langle\vec{w}f_{I, k-3}\rangle\Big)+Er_{2}
+O(\epsilon^{N+1}),\hspace{6.4 cm}
\end{aligned}
\end{equation}

\begin{equation}\label{resi3-jia-ini}
\begin{aligned}
&\mathcal{L}_{3}\Big(\sum_{k=0}^{N}{\epsilon}^{k}(\rho_{k}+\rho_{I, k}), \sum_{k=0}^{N}\epsilon^{k}(\vec{u}_{k}+\vec{u}_{I, k}), \sum_{k=0}^{N}\epsilon^{k}(\theta_{k}+\theta_{I, k}), \sum_{k=0}^{N}{\epsilon}^{k}(f_{k}+f_{I, k})\Big)\\=&\mathcal{L}_{3}\Big(\sum_{k=0}^{N}{\epsilon}^{k}\rho_{k}, \sum_{k=0}^{N}\epsilon^{k}\vec{u}_{k}, \sum_{k=0}^{N}\epsilon^{k}\theta_{k}, \sum_{k=0}^{N}{\epsilon}^{k}f_{k}\Big)+\sum_{k=0}^{N}\epsilon^{k}\Big((A_{0}+\rho_{I, 0})\partial_{\tau}(D_{k}+\theta_{I, k})-A_{0}\partial_{\tau}D_{k}\\&+(A_{k}+\rho_{I, k})\partial_{\tau}(D_{0}+\theta_{I, 0})-A_{k}\partial_{\tau}D_{0}+\sum_{\substack{
  i+j=k, \\
  0\leq i, j\leq k-1
}}((A_{i}+\rho_{I, i})\partial_{\tau}(D_{j}+\theta_{I, j})-A_{i}\partial_{\tau}D_{j})\\&+\sum_{\substack{
  i+j+l=k-2, \\
  0\leq i, j, l\leq k-2
}}((A_{i}+\rho_{I, i})(\vec{E}_{j}+\vec{u}_{I, j})\cdot\nabla_{x}(D_{l}+\theta_{I, l})-A_{i}\vec{E}_{j}\cdot\nabla_{x}D_{l})+\sum_{\substack{
  i+j+l=k-2, \\
  0\leq i, j, l\leq k-2
}}((A_{i}+\rho_{I, i})
\\&\times(D_{l}+\theta_{I, l})\mathrm{div}_{x}(\vec{E}_{j}+\vec{u}_{I, j})-A_{i}D_{l}\mathrm{div}_{x}\vec{E}_{j})-\overline{f_{I, k}}
+\sum_{\substack{
  i+j+l+m=k, \\
  i,j,l,m\geq 0
}}((D_{i}+\theta_{I, i})(D_{j}+\theta_{I, j})
\\&\times(D_{l}+\theta_{I, l})(D_{m}+\theta_{I, m})-D_{i}D_{j}D_{l}D_{m})
+\sum_{\substack{
  i+j=k-1, \\
  0\leq i, j\leq k-1
}}(\langle\vec{w}(J_{i}+f_{I, i})\rangle\cdot(\vec{E}_{j}+\vec{u}_{I, j})
\\&-\langle\vec{w}J_{i}\rangle\cdot\vec{E}_{j})
+\sum_{\substack{
  i+j=k-3, \\
  0\leq i, j\leq k-3
}}(\langle\vec{w}(J_{i}+f_{I, i})\rangle\cdot(\vec{E}_{j}+\vec{u}_{I, j})-\langle\vec{w}J_{i}\rangle\cdot\vec{E}_{j})\Big)+Er_{3}
+O(\epsilon^{N+1})
\end{aligned}
\end{equation}
and
\begin{equation}\label{resi4-zaijia-xin}
\begin{aligned}
&\mathcal{L}_{4}\Big(\sum_{k=0}^{N}{\epsilon}^{k}(f_{k}+f_{I, k}), \sum_{k=0}^{N}\epsilon^{k}(\theta_{k}+\theta_{I, k})\Big)
\\=&\mathcal{L}_{4}\Big(\sum_{k=0}^{N}{\epsilon}^{k}f_{k}, \sum_{k=0}^{N}\epsilon^{k}\theta_{k}\Big)
+\sum_{k=0}^{N}(\partial_{\tau}f_{I, k}+\vec{w}\cdot\nabla_{x}f_{I, k-1}+f_{I, k}+f_{I, k-2}-\overline{f_{I, k-2}}
\\&-(B_{k}(D_{0}+\theta_{I, 0};D_{k}+\theta_{I, k})-B_{k}(D_{0};D_{k})))
+Er_{4}+O(\epsilon^{N+1}),
\end{aligned}
\end{equation}
where
\begin{equation}\label{error1}
\begin{aligned}
Er_{1}=&\sum_{k=0}^{N}\epsilon^{k}\Big(\sum_{\substack{
  i+j=k-2, \\
  0\leq i,j\leq k-2
}}\{(\vec{u}_{i}+\vec{u}_{I, i})\cdot\nabla_{x}(\rho_{j}+\rho_{I, j})-\vec{u}_{i}\cdot\nabla_{x}\rho_{j}\}
\end{aligned}
\end{equation}
\begin{equation}\nonumber
\begin{aligned}
\\&+\sum_{\substack{
  i+j=k-2, \\
  0\leq i,j\leq k-2
}}\{(\rho_{j}+\rho_{I, j})\mathrm{div}_{x}(\vec{u}_{i}+\vec{u}_{I, i})-\rho_{j}\mathrm{div}_{x}\vec{u}_{i}\}
-\sum_{\substack{
  i+j=k-2, \\
  0\leq i,j\leq k-2
}}\{(\vec{E}_{i}+\vec{u}_{I, i})\cdot\nabla_{x}(A_{j}+\rho_{I, j})
\\&-\vec{E}_{i}\cdot\nabla_{x}\rho_{j}\}-\sum_{\substack{
  i+j=k-2, \\
  0\leq i,j\leq k-2
}}\{(A_{j}+\rho_{I, j})\mathrm{div}_{x}(\vec{E}_{i}+\vec{u}_{I, i})-A_{j}\mathrm{div}_{x}\vec{E}_{i}\}\Big),
\end{aligned}
\end{equation}

\begin{equation}\label{error2}
\begin{aligned}
&Er_{2}\\=&\sum_{k=0}^{N}\epsilon^{k}\Big((\rho_{0}+\rho_{I, 0})\partial_{\tau}(\vec{u}_{k}+\vec{u}_{I, k})-\rho_{0}\partial_{\tau}\vec{u}_{k}+(\rho_{k}+\rho_{I, k})\partial_{\tau}(\vec u_{0}+\vec{u}_{I, 0})-\rho_{k}\partial_{\tau}\vec{u}_{0}-((A_{0}+\rho_{I, 0})\\&\partial_{\tau}(\vec{E}_{k}+\vec{u}_{I, k})+A_{0}\partial_{\tau}\vec{E}_{k}+(A_{k}+\rho_{I, k})\partial_{\tau}(\vec {E}_{0}+\vec{u}_{I, 0})-A_{k}\partial_{\tau}\vec{E}_{0})+\sum_{\substack{
  i+j=k-2, \\
  0\leq i,j\leq k-2
}}\{(\rho_{i}+\rho_{I, i})\partial_{\tau}(\vec{u}_{j}\\&+\vec{u}_{I, j})-\rho_{i}\partial_{\tau}\vec{u}_{j}\}
+\sum_{\substack{
  i+j+l=k-2, \\
  0\leq i, j, l\leq k-2
}}\{(\rho_{i}+\rho_{I, i})(\vec{u}_{j}
+\vec{u}_{I, j})\cdot\nabla_{x}(\vec{u}_{l}+\vec{u}_{I, l})-\rho_{i}\vec{u}_{j}\cdot\nabla_{x}\vec{u}_{l}\}
+\sum_{\substack{
  i+j=k-2, \\
  0\leq i,j\leq k-2
}}\\&\{(\rho_{i}+\rho_{I, i})\nabla_{x}(\theta_{j}+\theta_{I, j})-\rho_{i}\nabla_{x}\theta_{j}\}
+\sum_{\substack{
  i+j=k-2, \\
  0\leq i,j\leq k-2
}}
\{(\theta_{j}+\theta_{I, j})\cdot\nabla_{x}(\rho_{i}+\rho_{I, i})-\theta_{j}\cdot\nabla_{x}\rho_{i}\}\Big)-\sum_{k=0}^{N}\\&\epsilon^{k}\Big((A_{0}+\rho_{I, 0})\partial_{\tau}(\vec{E}_{k}+\vec{u}_{I, k})
-A_{0}\partial_{\tau}\vec{E}_{k}+(A_{k}+\rho_{I, k})\partial_{\tau}(\vec {E}_{0}+\vec{u}_{I, 0})-A_{k}\partial_{\tau}\vec{E}_{0}
+\sum_{\substack{
  i+j=k-2, \\
  0\leq i,j\leq k-2
}}\{(A_{i}\\&+\rho_{I, i})\partial_{\tau}(\vec{E}_{j}+\vec{u}_{I, j})
-A_{i}\partial_{\tau}\vec{E}_{j}\}
+\sum_{\substack{
  i+j+l=k-2, \\
  0\leq i, j, l\leq k-2
}}\{(A_{i}+\rho_{I, i})(\vec{E}_{j}+\vec{u}_{I, j})\cdot\nabla_{x}(\vec{E}_{l}+\vec{u}_{I, l})
-A_{i}\vec{E}_{j}\\&\cdot\nabla_{x}\vec{E}_{l}\}
+\sum_{\substack{
  i+j=k-2, \\
  0\leq i,j\leq k-2
}}\{(A_{i}+\rho_{I, i})\nabla_{x}(D_{j}+\theta_{I, j})-A_{i}\nabla_{x}D_{j}\}
+\sum_{\substack{
  i+j=k-2, \\
  0\leq i,j\leq k-2
}}\{(D_{j}+\theta_{I, j})\cdot\nabla_{x}(A_{i}\\&+\rho_{I, i})-D_{j}\cdot\nabla_{x}A_{i}\})\Big),
\end{aligned}
\end{equation}

\begin{equation}\label{error3}
\begin{aligned}
&Er_{3}\\=&\sum_{k=0}^{N}\epsilon^{k}
\Big(((\rho_{0}+\rho_{I, 0})
\partial_{\tau}(\theta_{k}+\theta_{I, k})-\rho_{0}\partial_{\tau}\theta_{k}-(A_{0}+\rho_{I, 0})\partial_{\tau}(D_{k}+\theta_{I, k})+A_{0}\partial_{\tau}D_{k}
+((\rho_{k}+\rho_{I, k})\\&\partial_{\tau}(\theta_{0}+\theta_{I, 0})-\rho_{k}\partial_{\tau}\theta_{0}-(A_{k}+\rho_{I, k})\partial_{\tau}(D_{0}+\theta_{I, 0})+A_{k}\partial_{\tau}D_{0})
+\sum_{\substack{
  i+j=k, \\
  0\leq i, j\leq k-1
}}((\rho_{i}+\rho_{I, i})\partial_{\tau}(\theta_{j}+\theta_{I, j})\\&-\rho_{i}\partial_{\tau}\theta_{j}-(A_{i}+\rho_{I, i})\partial_{\tau}(D_{j}+\theta_{I, j})+A_{i}\partial_{\tau}D_{j})
+\sum_{\substack{
  i+j+l=k, \\
  0\leq i, j, l\leq k
}}((\rho_{i}+\rho_{I, i})(\vec{u}_{j}+\vec{u}_{I, j})\cdot\nabla_{x}(\theta_{l}+\theta_{I, l})
\end{aligned}
\end{equation}
\begin{equation}\nonumber
\begin{aligned}
\\&-\rho_{i}\vec{u}_{j}\cdot\nabla_{x}\theta_{l}-(A_{i}+\rho_{I, i})(\vec{E}_{j}+\vec{u}_{I, j})\cdot\nabla_{x}(D_{l}+\theta_{I, l})+A_{i}\vec{B}_{j}\cdot\nabla_{x}D_{l})+\sum_{\substack{
  i+j+l=k, \\
  0\leq i, j, l\leq k
}}((\rho_{i}+\rho_{I, i})\\&(\theta_{l}+\theta_{I, l})\mathrm{div}_{x}(\vec{u}_{j}+\vec{u}_{I, j})-\rho_{i}\theta_{l}\mathrm{div}_{x}\vec{u}_{j}
-(A_{i}+\rho_{I, i})(D_{l}+\theta_{I, l})\mathrm{div}_{x}(\vec{E}_{j}+\vec{u}_{I, j})+A_{i}D_{l}\mathrm{div}_{x}\vec{E}_{j})
\\&+\sum_{\substack{
  i+j+l+m=k, \\
  i,j,l,m\geq 0
}}((\theta_{i}+\theta_{I, i})(\theta_{j}+\theta_{I, j})(\theta_{l}+\theta_{I, l})(\theta_{m}+\theta_{I, m})-\theta_{i}\theta_{j}\theta_{l}\theta_{m}-(D_{i}+\theta_{I, i})(D_{j}+\theta_{I, j})\\&(D_{l}+\theta_{I, l})(D_{m}+\theta_{I, m})
+D_{i}D_{j}D_{l}D_{m})
+\sum_{\substack{
  i+j=k-1, \\
  0\leq i, j\leq k-1
}}(\langle\vec{w}(f_{i}+f_{I, i})\rangle\cdot(\vec{u}_{j}+\vec{u}_{I, j})-\langle\vec{w}f_{i}\rangle\cdot\vec{u}_{j}\\&-\langle\vec{w}(J_{i}+f_{I, i})\rangle\cdot(\vec{E}_{j}+\vec{u}_{I, j})
+\langle\vec{w}J_{i}\rangle\cdot\vec{E}_{j})
+\sum_{\substack{
  i+j=k-3, \\
  0\leq i, j\leq k-3
}}(\langle\vec{w}(f_{i}+f_{I, i})\rangle\cdot(\vec{u}_{j}+\vec{u}_{I, j})-\langle\vec{w}f_{i}\rangle\cdot\vec{u}_{j}\\&-\langle\vec{w}(J_{i}+f_{I, i})\rangle\cdot(\vec{E}_{j}+\vec{u}_{I, j})+\langle\vec{w}J_{i}\rangle\cdot\vec{E}_{j})\Big),
\end{aligned}
\end{equation}

\begin{equation}\label{error4}
\begin{aligned}
Er_{4}=&-\sum_{k=0}^{N}{\epsilon}^{k}(B_{k}(\theta_{0}+\theta_{I, 0};\theta_{k}+\theta_{I, k})-B_{k}(\theta_{0};\theta_{k}))
+\sum_{k=0}^{N}{\epsilon}^{k}(B_{k}(D_{0}+\theta_{I, 0};D_{k}+\theta_{I, k})\\&-B_{k}(D_{0};D_{k})),
\end{aligned}
\end{equation}
and $O(\epsilon^{N+1})$ denotes the higher-order remainder and is well-defined, $A_{k}=A_{k}(\tau, \vec x)$, $\vec{E}_{k}=\vec{E}_{k}(\tau, \vec x)$, $D_{k}=D_{k}(\tau, \vec x)$ and $J_{k}=J_{k}(\tau, \vec x, \vec w)$ $k=0, 1, \cdot\cdot\cdot, N$ are obtained by Taylor-expanding $\rho_{k}(\epsilon^{2}\tau, \vec x), \vec{u}_{k}(\epsilon^{2}\tau, \vec x), \theta_{k}(\epsilon^{2}\tau, \vec x), f_{k}(\epsilon^{2}\tau, \vec x, \vec w)$, respectively, at $\tau=0$:
\begin{equation}\label{formula1}
A_{k}(\tau, \vec x)=\sum_{l=0}^{k}\epsilon^{l}\frac{\tau^{l}}{l!}\frac{\partial^{l}}{\partial t^{l}}\rho_{k-l}(0, \vec x),\ \ \vec{E}_{k}(\tau, \vec x)=\sum_{l=0}^{k}\epsilon^{l}\frac{\tau^{l}}{l!}\frac{\partial^{l}}{\partial t^{l}}\vec{u}(0, \vec x),
\end{equation}
and
\begin{equation}\label{formula2}
D_{k}(\tau, \vec x)=\sum_{l=0}^{k}\epsilon^{l}\frac{\tau^{l}}{l!}\frac{\partial^{l}}{\partial t^{l}}\theta_{k-l}(0, \vec x),\ \ J_{k}(\tau, \vec x, \vec w)=\sum_{l=0}^{k}\epsilon^{l}\frac{\tau^{l}}{l!}\frac{\partial^{l}}{\partial t^{l}}f_{k-l}(0, \vec x, \vec w).
\end{equation}

{\rmk To ensure that the coefficients in the initial layer system remain independent of $\epsilon$, we adopt the definitions in \eqref{formula1} and \eqref{formula2}. These formulations are motivated by the analogous construction in (2.12) of \cite{traceth-1}; however, the additional factor $\epsilon^{l}$ appearing in \eqref{formula1} and \eqref{formula2} arises from the distinct time scaling $\tau=\frac{t}{\epsilon^{2}}$, in contrast to the spatial scaling $\eta=\frac{x_{1}}{\epsilon}$ used in \cite{traceth-1}. Consequently, the operator $A_{k}$ may be interpreted as a perturbation of $A_{0}$ by terms of order $O(\epsilon)$, and hence is well-defined; the same reasoning applies mutatis mutandis to $E_{k}, D_{k}$ and $J_{k}$. Moreover, in view of the definition of $D_{k}(\tau, \vec x)$ and the presence of $\Delta_{x}\theta_{k}$ in the definition of $\theta_{k}$ in \eqref{thetak-1}, the requirement that the initial data belong to $H^{2N+2}(\mathbb{T}^{3})$ rather than merely $H^{N+2}(\mathbb{T}^{3})$, becomes necessary.}

Collecting terms of the same order in \eqref{resi1-jia-ini}-\eqref{resi4-zaijia-xin}, we obtain
\begin{equation}\label{biaoda1}
\begin{aligned}
&\partial_{\tau}\rho_{I, k}+\sum_{\substack{
  i+j=k-2, \\
  0\leq i,j\leq k-2
}}\{(\vec{E}_{i}+\vec{u}_{I, i})\cdot\nabla_{x}(A_{j}+\rho_{I, j})-\vec{E}_{i}\cdot\nabla_{x}A_{j}\}\\&+\sum_{\substack{
  i+j=k-2, \\
  0\leq i,j\leq k-2
}}\{(A_{j}+\rho_{I, j})\mathrm{div}_{x}(\vec{E}_{i}+\vec{u}_{I, i})-A_{j}\mathrm{div}_{x}\vec{E}_{i}\}=0,
\end{aligned}
\end{equation}

\begin{equation}\label{biaoda2}
\begin{aligned}
&(A_{0}+\rho_{I, 0})\partial_{\tau}(\vec{E}_{k}+\vec{u}_{I, k})-A_{0}\partial_{\tau}\vec{E}_{k}
+(A_{k}+\rho_{I, k})\partial_{\tau}(\vec {E}_{0}+\vec{u}_{I, 0})-A_{k}\partial_{\tau}\vec{E}_{0}+\sum_{\substack{
  i+j=k-2, \\
  0\leq i,j\leq k-2
}}\\&\{(A_{i}+\rho_{I, i})\partial_{\tau}(\vec{E}_{j}+\vec{u}_{I, j})-A_{i}\partial_{\tau}\vec{E}_{j}\}
+\sum_{\substack{
  i+j+l=k-2, \\
  0\leq i, j, l\leq k-2
}}\{(A_{i}+\rho_{I, i})(\vec{E}_{j}+\vec{u}_{I, j})\cdot\nabla_{x}(\vec{E}_{l}+\vec{u}_{I, l})
\\&-A_{i}\vec{E}_{j}\cdot\nabla_{x}\vec{E}_{l}\}+\sum_{\substack{
  i+j=k-2, \\
  0\leq i,j\leq k-2
}}\{(A_{i}+\rho_{I, i})\nabla_{x}(D_{j}+\theta_{I, j})-A_{i}\nabla_{x}D_{j}\}+\sum_{\substack{
  i+j=k-2, \\
  0\leq i,j\leq k-2
}}
\\&\{(D_{j}+\theta_{I, j})\cdot\nabla_{x}(A_{i}+\rho_{I, i})-D_{j}\cdot\nabla_{x}A_{i}\}-\langle\vec{w}f_{I, k-1}\rangle-\langle\vec{w}f_{I, k-3}\rangle=0,\hspace{3.4 cm}
\end{aligned}
\end{equation}
\begin{equation}\label{biaoda3}
\begin{aligned}
&(A_{0}+\rho_{I, 0})\partial_{\tau}(D_{k}+\theta_{I, k})-A_{0}\partial_{\tau}D_{k}
+(A_{k}+\rho_{I, k})\partial_{\tau}(D_{0}+\theta_{I, 0})-A_{k}\partial_{\tau}D_{0}+\sum_{\substack{
  i+j=k, \\
  0\leq i, j\leq k-1
}}
\\&((A_{i}+\rho_{I, i})\partial_{\tau}(D_{j}+\theta_{I, j})-A_{i}\partial_{\tau}D_{j})
+\sum_{\substack{
  i+j+l=k, \\
  0\leq i, j, l\leq k
}}((A_{i}+\rho_{I, i})(\vec{E}_{j}+\vec{u}_{I, j})\cdot\nabla_{x}(D_{l}+\theta_{I, l})
-\\&A_{i}\vec{E}_{j}\cdot\nabla_{x}D_{l})
+\sum_{\substack{
  i+j+l=k, \\
  0\leq i, j, l\leq k
}}((A_{i}+\rho_{I, i})(D_{l}+\theta_{I, l})\mathrm{div}_{x}(\vec{E}_{j}+\vec{u}_{I, j})-A_{i}D_{l}\mathrm{div}_{x}\vec{E}_{j})-\overline{f_{I, k}}
+\\&\sum_{\substack{
  i+j+l+m=k, \\
  i,j,l,m\geq 0
}}((D_{i}+\theta_{I, i})(D_{j}+\theta_{I, j})(D_{l}+\theta_{I, l})(D_{m}+\theta_{I, m})-D_{i}D_{j}D_{l}D_{m})
+\sum_{\substack{
  i+j=k-1, \\
  0\leq i, j\leq k-1
}}(\langle\vec{w}(J_{i}\\&+f_{I, i})\rangle\cdot(\vec{E}_{j}+\vec{u}_{I, j})-\langle\vec{w}J_{i}\rangle\cdot\vec{E}_{j})
+\sum_{\substack{
  i+j=k-3, \\
  0\leq i, j\leq k-3
}}(\langle\vec{w}(J_{i}+f_{I, i})\rangle\cdot(\vec{E}_{j}+\vec{u}_{I, j})-\langle\vec{w}J_{i}\rangle\cdot\vec{E}_{j})=0
\end{aligned}
\end{equation}
and
\begin{equation}\label{biaoda4}
\begin{aligned}
\partial_{\tau}f_{I, k}+\vec{w}\cdot\nabla_{x}f_{I, k-1}+f_{I, k}+f_{I, k-2}-\overline{f_{I, k-2}}-(B_{k}(D_{0}+\theta_{I, 0};D_{k}+\theta_{I, k})-B_{k}(D_{0}; D_{k})=0.
\end{aligned}
\end{equation}

\subsection{Construction of asymptotic expansion.}\label{construction}
The interior solution and the initial-layer solution are matched through the initial condition of \eqref{research equations}. To begin with, we have the following relations:
\begin{equation}\nonumber
\rho_{0}(0, \vec{x})+\rho_{I, 0}(0, \vec{x})=\rho^{0}(\vec{x}),\ \ \vec{u}_{0}(0, \vec{x})+\vec{u}_{I, 0}(0, \vec{x})=\vec{u}^{0}(\vec{x}),\ \ \theta_{0}(0, \vec{x})+\theta_{I, 0}(0, \vec{x})=\theta^{0}(\vec{x}),\hspace{-0.2 cm}
\end{equation}
\begin{equation}\nonumber
f_{0}(0, \vec{x}, \vec{w})+f_{I, 0}(0, \vec{x}, \vec{w})=h(\vec{x}, \vec{w}),\hspace{0.4 cm}
\end{equation}
\begin{equation}\nonumber
\rho_{k}(0, \vec{x})+\rho_{I, k}(0, \vec{x})=0,\ \ \vec{u}_{k}(0, \vec{x})+\vec{u}_{I, k}(0, \vec{x})=0,\ \ \theta_{k}(0, \vec{x})+\theta_{I, k}(0, \vec{x})=0,\ \ k\geq 1,\hspace{-0.9 cm}
\end{equation}
\begin{equation}\nonumber
f_{k}(0, \vec{x}, \vec{w})+f_{I, k}(0, \vec{x}, \vec{w})=0,\ \ k\geq 1.\hspace{0.0cm}
\end{equation}

The construction of $\rho_{k}, \rho_{I, k}$, $\vec u_{k}, \vec u_{I, k}$, $\theta_{k}, \theta_{I, k}$ and $f_{k}, f_{I, k}$ are as follows:

\noindent\textbf{Step 1.}Construction of zeroth-order terms.

On the basis of \eqref{biaoda1} and \eqref{biaoda2}, the equations for $\rho_{I, 0}, \vec{u}_{I, 0}$ are defined by
\begin{equation}\label{equation about theta I0}\left\{
\begin{split}
&\frac{\partial \rho_{I, 0}}{\partial\tau}=0,\;\;\frac{\partial \vec u_{I, 0}}{\partial\tau}=0,\\
&(\rho_{I, 0}(0, \vec x), \vec u_{I, 0}(0, \vec x))=(\rho^{0}(\vec x)-\rho_{0}(0, \vec x), \vec {u}^{0}(\vec x)-\vec u_{0}(0, \vec x)),\\
&\lim_{\tau\rightarrow\infty}(\rho_{I, 0}(\tau, \vec x), \vec u_{I, 0}(\tau, \vec x))=(0, 0).\\
\end{split}\right.
\end{equation}

A direct calculation yields $(\rho_{I, 0}, \vec{u}_{I, 0})=(0, 0)$. Hence, it follows that $\rho_{0}(0, \vec x)=\rho^{0}(\vec x)$ and $\vec u_{0}(0, \vec x)=\vec {u}^{0}(\vec x)$.

Henceforth, we impose the assumption $\theta_{0}(0, \vec x)=\theta_{0}^{0}(\vec x)$, with $\theta_{0}^{0}(\vec x)$ to be determined subsequently. In conjunction with $\eqref{zero-ini-sys}_{1}$, \eqref{biaoda3}, \eqref{biaoda4}, and the condition $\rho_{0}(0, \vec x)=\rho^{0}(\vec x)$, the zeroth-order initial-layer solution $(f_{I, 0}, \theta_{I, 0})$ is then defined by
\begin{equation}\label{equations about f I0,theta I0}\left\{
\begin{split}
&\frac{\partial f_{I, 0}}{\partial\tau}+f_{I, 0}=B(\theta_{0}^{0}+\theta_{I, 0})-B(\theta_{0}^{0}),\\
&\frac{\partial \theta_{I, 0}}{\partial\tau}=\frac{1}{\rho^{0}}(\overline{f_{I, 0}}-B(\theta_{0}^{0}+\theta_{I, 0})+B(\theta_{0}^{0})),\\
&f_{I, 0}(0, \vec{x}, \vec{w})=h(\vec{x}, \vec{w})-(\theta_{0}^{0}(\vec x))^{4}, \quad\theta_{I, 0}(0, \vec{x})=\theta^{0}(\vec x)-\theta_{0}^{0}(\vec x),\\
&\lim_{\tau\rightarrow\infty}f_{I, 0}(\tau, \vec{x}, \vec{w})=0,\ \ \lim_{\tau\rightarrow\infty}\theta_{I, 0}(\tau, \vec{x}, \vec{w})=0.
\end{split}\right.
\end{equation}

Let $\hat{f}_{0}(\tau, \vec w, \vec w)=f_{I, 0}(\tau, \vec w, \vec w)+\overline{f_{0}}(0, \vec x)=f_{I, 0}+(\theta_{0}^{0})^{4}$ and $\hat\theta_{0}(\tau, \vec x)=\theta_{I, 0}(\tau, \vec w)+\theta_{0}(0, \vec x)=\theta_{I, 0}+\theta_{0}^{0}$. Then the system \eqref{equations about f I0,theta I0} can be rewritten as
\begin{equation}\label{equations about f I0,theta I0-jia}\left\{
\begin{split}
&f_{I, 0}=\hat{f}_{0}-(\theta_{0}^{0})^{4}, \quad\theta_{I, 0}=\hat\theta_{0}-\theta_{0}^{0},\\
&\frac{\partial\hat{f}_{0}}{\partial\tau}+\hat{f}_{0}=B(\hat\theta_{0}),\\
&\frac{\partial\hat\theta_{0}}{\partial\tau}=\frac{1}{\rho^{0}}(\overline{\hat{f}_{0}}-B(\hat\theta_{0})),\\
&\hat{f}_{0}(0, \vec{x}, \vec{w})=h(\vec{x}, \vec{w}), \quad\hat\theta_{0}(0, \vec{x})=\theta^{0}(\vec x),\\
&\lim_{\tau\rightarrow\infty}\hat{f}_{0}(\tau, \vec{x}, \vec{w})=(\theta_{0}^{0}(\vec x))^{4},\ \ \lim_{\tau\rightarrow\infty}\hat\theta_{0}(\tau, \vec{x}, \vec{w})=\theta_{0}^{0}(\vec x).
\end{split}\right.
\end{equation}
{\lem\label{ini-0}
Assume that $h(\vec{x}, \vec{w})\in L^{\infty}(\mathbb{S}^{2}; H^{2N+2}(\mathbb{T}^{3}))$ and $\rho^{0}(\vec x), \vec{u}^{0}(\vec x), \theta^{0}(\vec{x})\in H^{2N+2}(\mathbb{T}^{3})$ with $h(\vec{x}, \vec{w})>0, \rho^{0}(\vec x), \theta^{0}(\vec x)\geq a$ and $\|h-(\theta^{0})^{4}\|_{L^{\infty}(\mathbb{S}^{2}; H^{2N+2}(\mathbb{T}^{3}))}\leq \eta$, where $N\geq 17$, $a$ and $\eta$ are positive constants and $\eta$ is suitably small and independent of $\epsilon$. The problem \eqref{equations about f I0,theta I0} has a unique solution $(f_{I, 0}, \theta_{I, 0})\in (C^{1}([0, \infty); L^{\infty}(\mathbb{S}^{2}; H_{\vec x}^{2N+2}))\cap L^{1}([0, \infty); L^{\infty}(\mathbb{S}^{2}; H_{\vec x}^{2N+2})))\times (C^{1}([0, \infty); H_{\vec x}^{2N+2})\cap L^{1}([0, \infty); H_{\vec x}^{2N+2}))$. Furthermore, there exists a suitably small constant $\sigma_{0}>0$ such that
\begin{equation}\label{biaoda6}
\|\theta_{0}^{0}(\vec x)\|_{H_{\vec x}^{2N+2}}\leq C,
\end{equation}
\begin{equation}\label{ineq-theta00-jia1}
\|e^{\sigma_{0}\tau}f_{I, 0}\|_{L^{\infty}([0, \infty); L^{\infty}_{\vec w}H_{\vec x}^{2N+2})}+\|e^{\sigma_{0}\tau}\theta_{I, 0}\|_{L^{\infty}([0, \infty); H_{\vec x}^{2N+2})}\leq C\eta
\end{equation}
and
\begin{equation}\label{ineq-theta00-jia2}
\|e^{\sigma_{0}\tau}f_{I, 0}\|_{L^{1}([0, \infty); L^{\infty}_{\vec w}H_{\vec x}^{2N+2})}+\|e^{\sigma_{0}\tau}\theta_{I, 0}\|_{L^{1}([0, \infty); H_{\vec x}^{2N+2})}\leq C\eta.
\end{equation}\nonumber}

\noindent\textbf{Proof.} From the equations $\eqref{equations about f I0,theta I0}_{1}$ and $\eqref{equations about f I0,theta I0}_{2}$, we can derive 
\begin{equation}\nonumber
\partial_{\tau}(\overline {f_{I, 0}}+\rho^{0}\theta_{I, 0})=0.
\end{equation}
By the fact that $f_{I, 0}, \theta_{I, 0}\rightarrow 0$ as $\tau\rightarrow\infty$, we can obtain
\begin{equation}\label{daoshuweiling}
\overline {f_{I, 0}}+\rho^{0}\theta_{I, 0}\equiv 0,
\end{equation}
which further implies
\begin{equation}\label{biaoda5}
(\theta^{0}_{0})^{4}+\rho^{0}\theta_{0}^{0}=\overline h+\rho^{0}\theta^{0}:=l_{0}(\vec x).
\end{equation}

Define $H(\theta_{0}^{0})=(\theta^{0}_{0})^{4}+\rho^{0}\theta_{0}^{0}:\mathbb{R}^{+}\rightarrow\mathbb{R}^{+}$. Since $\rho^{0}\geq a>0$, we have $H'=4(\theta_{0}^{0})^{3}+\rho^{0}>0$ for any $\theta_{0}^{0}\in\mathbb{R}^{+}$. So, $H$ is a one-to-one function. By the fact that $l_{0}\geq a^{2}>0$ and $H$ is a $C^\infty$ function about $\theta_{0}^{0}$, then the inverse function $\theta_{0}^{0}(\vec x)=H^{-1}(l_{0}(\vec x))\geq b>0$ where $b=b(a)$, $H^{-1}(l_{0}(\vec x))$ is a $C^\infty$ function about $l_{0}$. Since $l_{0}(\vec x)\in H_{\vec x}^{2N+2}$, we have $\theta_{0}^{0}(\vec x)=H^{-1}(l_{0}(\vec x))\in H_{\vec x}^{2N+2}$. Then, \eqref{biaoda6} follows directly with some constant $C$ depending on $\rho^{0}, \overline h$ and $\theta^{0}$.

According to \eqref{biaoda5} and the definition of $l_{0}(\vec x)$, we have
\begin{equation}\label{biaoshi1}
(\theta^{0}_{0})^{4}+\rho^{0}\theta_{0}^{0}=\overline h+\rho^{0}\theta^{0},
\end{equation}
which can be further written as 
\begin{equation}\label{biaoshi2}
(\theta^{0}_{0})^{4}-(\theta^{0})^{4}+\rho^{0}(\theta_{0}^{0}-\theta^{0})=\overline h-(\theta^{0})^{4}.
\end{equation}
Then,
\begin{equation}\label{biaoshi3}
\theta_{0}^{0}-\theta^{0}=\frac{\overline h-(\theta^{0})^{4}}{((\theta_{0}^{0})^{2}+(\theta^{0})^{2})(\theta_{0}^{0}+\theta^{0})+\rho^{0}}.
\end{equation}
So,
\begin{equation}\label{biaoshi4}
\|\theta_{0}^{0}-\theta^{0}\|_{H_{\vec x}^{2N+2}(\mathbb{T}^{3})}\leq C\eta.
\end{equation}
Furthermore, we have
\begin{equation}\label{biaoshi5}
\|h-(\theta_{0}^{0})^{4}\|_{H_{\vec x}^{2N+2}(\mathbb{T}^{3})}\leq \|h-(\theta^{0})^{4}\|_{H_{\vec x}^{2N+2}(\mathbb{T}^{3})}+\|(\theta_{0}^{0})^{4}-(\theta^{0})^{4}\|_{H_{\vec x}^{2N+2}(\mathbb{T}^{3})}\leq C\eta.
\end{equation}
According to the equality \eqref{daoshuweiling}, we can write $\eqref{equations about f I0,theta I0}_{2}$ in the following form
\begin{equation}
\frac{\partial \theta_{I, 0}}{\partial\tau}=\frac{1}{\rho^{0}}(-\rho^{0}\theta_{I, 0}-B(\theta_{0}^{0}+\theta_{I, 0})+B(\theta_{0}^{0})),
\end{equation}
which can be further rewritten as
\begin{equation}\label{wenduini}
\frac{\partial \theta_{I, 0}}{\partial\tau}+\Big(1+\frac{4}{\rho^{0}}(\theta_{0}^{0})^{3}\Big)\theta_{I, 0}=\frac{1}{\rho^{0}}(-6(\theta_{0}^{0})^{2}\theta_{I, 0}^{2}-4\theta_{0}^{0}\theta_{I, 0}^{3}).
\end{equation}
Then, we have
\begin{equation}\label{jifenfangcheng0}
\begin{aligned}
\theta_{I, 0}(\tau)=e^{-\Big(1+\frac{4}{\rho^{0}}(\theta_{0}^{0})^{3}\Big)\tau}(\theta^{0}-\theta_{0}^{0})-\int_{0}^{\tau}e^{-\Big(1+\frac{4}{\rho^{0}}(\theta_{0}^{0})^{3}\Big)(\tau-s)}\frac{1}{\rho^{0}}(6(\theta_{0}^{0})^{2}\theta_{I, 0}^{2}+4\theta_{0}^{0}\theta_{I, 0}^{3})(s)\mathrm{d}s.
\end{aligned}
\end{equation}

We define the energy radius $E_{13}=\{\theta_{I, 0}:\|\theta_{I, 0}\|_{L^{\infty}(0, \infty; H_{\vec x}^{2N+2})}\leq 2\tilde\eta\}$, where $\tilde\eta=\\2\|e^{-(1+\frac{4}{\rho^{0}}(\theta_{0}^{0})^{3})\tau}(\theta^{0}-\theta_{0}^{0})\|_{L^{\infty}(0, \infty; H_{\vec x}^{2N+2})}=O(\eta)$.

We give the linearized form of \eqref{jifenfangcheng0} as follows:
\begin{equation}\label{jifenfangcheng00}
\begin{aligned}
\theta_{I, 0}^{k+1}(\tau)=e^{-\Big(1+\frac{4}{\rho^{0}}(\theta_{0}^{0})^{3}\Big)\tau}(\theta^{0}-\theta_{0}^{0})
-\int_{0}^{\tau}e^{-(1+4(\theta_{0}^{0})^{3})(\tau-s)}\frac{1}{\rho^{0}}(6(\theta_{0}^{0})^{2}(\theta_{I, 0}^{k})^{2}+\frac{4}{\rho^{0}}\theta_{0}^{0}(\theta_{I, 0}^{k})^{3})(s)\mathrm{d}s,
\end{aligned}
\end{equation}
where we choose $\theta_{I, 0}^{0}=0$. Assume $\|\theta_{I, 0}^{k}\|_{L^{\infty}(0, \infty; H_{\vec x}^{2N+2})}\leq\tilde\eta,$ then,
\begin{equation}\label{eneinitialo}
\begin{aligned}
\|\theta_{I, 0}^{k+1}\|_{L^{\infty}(0, \infty; H_{\vec x}^{2N+2})}\leq\tilde\eta+C_{1}\tilde\eta^{2}+C_{2}\tilde\eta^{3}\leq 2\tilde\eta,
\end{aligned}
\end{equation}
for sufficiently small $\eta$.

Furthermore, we set $\tilde{\theta}_{I, 0}^{k+1}=\theta_{I, 0}^{k+1}-\theta_{I, 0}^{k}$. Then, we can derive
\begin{equation}\label{jifenfangcheng1}
\begin{aligned}
\tilde{\theta}_{I, 0}^{k+1}(\tau)=&-\int_{0}^{\tau}e^{-\Big(1+\frac{4}{\rho^{0}}(\theta_{0}^{0})^{3}\Big)(\tau-s)}\frac{1}{\rho^{0}}(6(\theta_{0}^{0})^{2}(\theta_{I, 0}^{k}+\theta_{I, 0}^{k-1})\tilde{\theta}_{I, 0}^{k}+4\theta_{0}^{0}(\theta_{I, 0}^{k}+\theta_{I, 0}^{k-1})\\&\times((\theta_{I, 0}^{k})^{2}+\theta_{I, 0}^{k}\theta_{I, 0}^{k-1}+(\theta_{I, 0}^{k-1})^{2})\tilde{\theta}_{I, 0}^{k}(s)\mathrm{d}s,
\end{aligned}
\end{equation}
which further implies
\begin{equation}\label{contraction-ini}
\begin{aligned}
\|\tilde{\theta}_{I, 0}^{k+1}\|_{L^{\infty}(0, \infty; H_{\vec x}^{2N+2})}\leq (C_{1}'\tilde\eta+C_{2}'\tilde\eta^{2})\|\tilde{\theta}_{I, 0}^{k}\|_{L^{\infty}(0, \infty; H_{\vec x}^{2N+2})}.
\end{aligned}
\end{equation}
For sufficiently small $\eta$, we have $0<(C_{1}'\tilde\eta+C_{2}'\tilde\eta^{2})<1$. Then, we can claim that $\{\theta_{I, 0}^{k}\}$ is a contraction sequence.

According to \eqref{eneinitialo} and \eqref{contraction-ini}, we have $\theta_{I, 0}^{k}\rightarrow \theta_{I, 0}$ in $E_{13}$ as $k\rightarrow\infty$, by the Banach fixed point theorem.

According to $\eqref{equations about f I0,theta I0}_{1}$ and $\eqref{equations about f I0,theta I0}_{3}$, we can write $f_{I, 0}$ in the following form
\begin{equation}\label{fI0biaodashi}
f_{I, 0}=e^{-\tau}(h-(\theta_{0}^{0})^{4})+\int_{0}^{\tau}e^{s-\tau}(B(\theta_{0}^{0}+\theta_{I, 0})-B(\theta_{0}^{0}))\mathrm{d}s,
\end{equation}
which further implies the existence solution $f_{I, 0}\in C^{1}([0, \infty); L^{\infty}_{\vec w}H_{\vec x}^{2N+2})$. Moreover, we have
\begin{equation}
\|f_{I, 0}\|_{L^{\infty}(0, \infty; H_{\vec x}^{2N+2})}\leq C\eta.
\end{equation}

In the following, let us prove that $(f_{I, 0}, \theta_{I, 0})\in L^{1}([0, \infty); L^{\infty}_{\vec w}H_{\vec x}^{2N+2})\times L^{1}([0, \infty); H_{\vec x}^{2N+2})$.

According to \eqref{fI0biaodashi}, we have 
\begin{equation}\label{gron1}
\begin{aligned}
\|f_{I, 0}(\tau)\|_{L^{\infty}_{\vec w}H_{\vec x}^{2N+2}}\leq& e^{-\tau}\|h-(\theta_{0}^{0})^{4}\|_{L^{\infty}_{\vec w}H_{\vec x}^{2N+2}}+C\eta\int_{0}^{\tau}\|\theta_{I, 0}\|_{H_{\vec x}^{2N+2}}\mathrm{d}s
\\\leq& C\eta e^{-\tau}+C\eta\int_{0}^{\tau}\|\theta_{I, 0}\|_{H_{\vec x}^{2N+2}}\mathrm{d}s.
\end{aligned}
\end{equation}
According to \eqref{jifenfangcheng0}, we have
\begin{equation}\label{jifenfangcheng0-jia}
\begin{aligned}
\theta_{I, 0}(\tau)=e^{-\Big(1+\frac{4}{\rho^{0}}(\theta_{0}^{0})^{3}\Big)\tau}(\theta^{0}-\theta_{0}^{0})
-\int_{0}^{\tau}e^{-\Big(1+\frac{4}{\rho^{0}}(\theta_{0}^{0})^{3}\Big)(\tau-s)}(6(\theta_{0}^{0})^{2}\theta_{I, 0}^{2}+4\theta_{0}^{0}\theta_{I, 0}^{3})(s)\mathrm{d}s,
\end{aligned}
\end{equation}
So,
\begin{equation}\label{gron2}
\begin{aligned}
\|\theta_{I, 0}(\tau)\|_{H_{\vec x}^{2N+2}}\leq C\eta e^{-\kappa\tau}+C\eta\int_{0}^{\tau}e^{-\kappa(\tau-s)}\|\theta_{I, 0}\|_{H_{\vec x}^{2N+2}}\mathrm{d}s,
\end{aligned}
\end{equation}
where $\kappa\in (0, 1)$ is a suitably small positive constant. Add \eqref{gron1} and \eqref{gron2} together, we have
\begin{equation}\label{gron3}
\begin{aligned}
&\|f_{I, 0}(\tau)\|_{L^{\infty}_{\vec w}H_{\vec x}^{2N+2}}+\|\theta_{I, 0}(\tau)\|_{H_{\vec x}^{2N+2}}\\\leq& C\eta e^{-\kappa\tau}+C\eta\int_{0}^{\tau}e^{-\kappa(\tau-s)}(\|f_{I, 0}\|_{L^{\infty}_{\vec w}H_{\vec x}^{2N+2}}+\|\theta_{I, 0}\|_{H_{\vec x}^{2N+2}})\mathrm{d}s,
\end{aligned}
\end{equation}
which further implies 
\begin{equation}\label{gron4}
\begin{aligned}
&\|f_{I, 0}\|_{L^{1}([0, \infty); L^{\infty}_{\vec w}H_{\vec x}^{2N+2})}+\|\theta_{I, 0}(\tau)\|_{L^{1}([0, \infty); H_{\vec x}^{2N+2})}\leq C\eta,
\end{aligned}
\end{equation}
combining with the Gronwall's inequality.

Finally, we will prove the exponential decay of the above solutions.

For suitably small $0<\sigma_{0}<1/2$, multiplying $e^{\sigma_{0}\tau}$ on both sides of \eqref{equations about f I0,theta I0}, we have
\begin{equation}\label{equations about f I0,theta I0-zhishu}\left\{
\begin{split}
&\frac{\partial (e^{\sigma_{0}\tau}f_{I, 0})}{\partial\tau}+(1-\sigma_{0})e^{\sigma_{0}\tau}f_{I, 0}=e^{\sigma_{0}\tau}(B(\theta_{0}^{0}+\theta_{I, 0})-B(\theta_{0}^{0})),\\
&\partial_{\tau}(e^{\sigma_{0}\tau}\theta_{I, 0})+\Big(\frac{4}{\rho^{0}}(\theta_{0}^{0})^{3}-\sigma_{0}\Big)e^{\sigma_{0}\tau}\theta_{I, 0}=e^{\sigma_{0}\tau}\frac{1}{\rho^{0}}(\overline{f_{I, 0}}-6(\theta_{0}^{0})^{2}\theta_{I, 0}^{2}-4\theta_{0}^{0}\theta_{I, 0}^{3}-\theta_{I, 0}^{4}),\\
&f_{I, 0}(0, \vec{x}, \vec{w})=h(\vec{x}, \vec{w})-\overline {f_{0}}(0, \vec x),\ \ \theta_{I, 0}(0, \vec{x}, \vec{w})=\theta^{0}(\vec x)-\theta_{0}^{0}(\vec x),\\
&\lim_{\tau\rightarrow\infty}f_{I, 0}(\tau, \vec{x}, \vec{w})=0,\ \ \lim_{\tau\rightarrow\infty}\theta_{I, 0}(\tau, \vec{x}, \vec{w})=0.
\end{split}\right.
\end{equation}
Then, we can write 
\begin{equation}\label{fI0biaodashi-zhishu}
e^{\sigma_{0}\tau}f_{I, 0}=e^{-(1-\sigma_{0})\tau}(h-(\theta_{0}^{0})^{4})+\int_{0}^{\tau}e^{-(1-\sigma_{0})(\tau-s)}(B(\theta_{0}^{0}+\theta_{I, 0})-B(\theta_{0}^{0}))\mathrm{d}s
\end{equation}
and
\begin{equation}\nonumber
\begin{aligned}
e^{\sigma_{0}\tau}\theta_{I, 0}(\tau)=&\exp\Big\{-\Big(1+\frac{4}{\rho^{0}}(\theta_{0}^{0})^{3}-\sigma_{0}\Big)\tau\Big\}(\theta^{0}-\theta^{0}_{0})
\\&+\int_{0}^{\tau}\exp\Big\{-\Big(1+\frac{4}{\rho^{0}}(\theta_{0}^{0})^{3}-2\sigma_{0}\Big)(\tau-s)\Big\}(-6(\theta_{0}^{0})^{2}\theta_{I, 0}^{2}-4\theta_{0}^{0}\theta_{I, 0}^{3}-\theta_{I, 0}^{4})\mathrm{d}s.
\end{aligned}
\end{equation}
For suitably small $\sigma_{0}$, we can derive
\begin{equation}\nonumber
\|e^{\sigma_{0}\tau}f_{I, 0}\|_{L^{\infty}([0, \infty); L^{\infty}_{\vec w}H_{\vec x}^{2N+2})}+\|e^{\sigma_{0}\tau}\theta_{I, 0}\|_{L^{\infty}([0, \infty); H_{\vec x}^{2N+2})}\leq C\eta
\end{equation}
and
\begin{equation}\nonumber
\|e^{\sigma_{0}\tau}f_{I, 0}\|_{L^{1}([0, \infty); L^{\infty}_{\vec w}H_{\vec x}^{2N+2})}+\|e^{\sigma_{0}\tau}\theta_{I, 0}\|_{L^{1}([0, \infty); H_{\vec x}^{2N+2})}\leq C\eta
\end{equation}
by using similar calculations as above and Gronwall's inequality.

{\rmk In view of the definitions $\hat{f}_{0}(\tau, \vec w, \vec w)=f_{I, 0}+\overline{f_{0}}(0, \vec x)=f_{I, 0}+(\theta_{0}^{0})^{4}$, $\hat\theta_{0}(\tau, \vec x)=\theta_{I, 0}+\theta_{0}^{0}$, it follows that both $(\hat{f}_{0}(\tau, \vec w, \vec w), \hat\theta_{0}(\tau, \vec x))$ are well-defined.}

After imposing the initial datas of \eqref{zero-ini-sys}, we can derive
\begin{equation}\label{equations about f0 epsilon and theta}\left\{
\begin{split}
&f_{0}=\overline {f_{0}}=\theta_{0}^{4},\\
&\partial_{t}\rho_{0}+\mathrm{div}(\rho_{0}\vec u_{0})=0,~~\mathrm{in} \ \ (0,T)\times \mathbb{T}^{3},\\
&\partial_{t}(\rho_{0}\vec{u}_{0})+\mathrm{div}_{x}(\rho_{0}\vec{u}_{0}\otimes\vec u_{0})+\nabla_{x}(\rho_{0}\theta_{0})
=-\frac{1}{3}\nabla_{x}\theta_{0}^{4},~~\mathrm{in} \ \ (0,T)\times \mathbb{T}^{3},\\
&\partial_{t}(\rho_{0}\theta_{0}+\theta_{0}^{4})+\mathrm{div}_{x}(\rho_{0}\theta_{0}\vec{u}_{0})+\rho_{0}\theta_{0}\mathrm{div}_{x}\vec{u}_{0}
-\frac{1}{3}\Delta_{x}\theta_{0}^{4}=\frac{1}{3}\nabla_{x}\theta_{0}^{4}\cdot\vec u_{0},~~\mathrm{in} \ \ (0,T)\times\mathbb{T}^{3}, \\
&\rho_{0}(0, \vec{x})=\rho^{0}(\vec x),\;\;\vec{u}_{0}(0, \vec{x})=\vec{u}^{0}(\vec x),\ \ \theta_{0}(0, \vec{x})=\theta_{0}^{0}(\vec{x}),\;\;\overline {f_{0}}(0, \vec{x})=(\theta_{0}^{0})^{4}~~\mathrm{in}  \ \ \mathbb{T}^{3}.
\end{split}\right.
\end{equation}

{\lem \label{zerothinterior}
For the given data $\rho^{0}(\vec x), \vec{u}^{0}(\vec x), \theta_{0}^{0}(\vec{x})\in H_{\vec x}^{2N+2}$ and $\rho^{0}(\vec x)\geq a, \theta_{0}^{0}\geq b$ where $a$ and $b$ are given positive constants, the problem \eqref{equations about f0 epsilon and theta} has a unique local solution on $[0, T]$, and $(\rho_{0}, \vec u_{0}, \theta_{0}, \overline{f_{0}})\in C([0, T]; H_{\vec x}^{2N+2})\times C([0, T]; H_{\vec x}^{2N+2})\times C([0, T]; H_{\vec x}^{2N+2})\cap L^{2}(0, T; H_{\vec x}^{2N+3})\times C([0, T]; H_{\vec x}^{2N+2})\cap L^{2}(0, T; H_{\vec x}^{2N+3})$. Furthermore, we have
\begin{equation}\label{ineq-theta00}
\|(\rho_{0}, \vec u_{0}, \theta_{0}, \overline{f_{0}})\|_{L^{\infty}(0, T; H_{\vec x}^{2N+2})}+\|(\theta_{0}, \overline{f_{0}})\|_{L^{2}(0, T; H_{\vec x}^{2N+3})}\leq C
\end{equation}
and $\rho_{0}, \theta_{0}\geq \frac{a}{2}$.}

\noindent\textbf{Proof.} We can write \eqref{equations about f0 epsilon and theta} in the following form
\begin{equation}\label{equations about f0 epsilon and theta-1}\left\{
\begin{split}
&f_{0}=\overline {f_{0}}=\theta_{0}^{4},\\
&\partial_{t}\rho_{0}+\vec u_{0}\cdot\nabla_{x}\rho_{0}+\rho_{0}\mathrm{div}\vec u_{0}=0,~~\mathrm{in} \ \ (0,T)\times \mathbb{T}^{3},\\
&\partial_{t}\vec{u}_{0}+\vec{u}_{0}\cdot\nabla_{x}\vec u_{0}+\frac{\theta_{0}}{\rho_{0}}\nabla_{x}\rho_{0}
=-\Big(\frac{4\theta_{0}^{3}}{3\rho_{0}}+1\Big)\nabla_{x}\theta_{0},~~\mathrm{in} \ \ (0,T)\times \mathbb{T}^{3},\\
&\partial_{t}\theta_{0}-\frac{4\theta_{0}^{3}}{(\rho_{0}+4\theta_{0}^{3})}\Delta_{x}\theta_{0}=-\frac{\rho_{0}}{(\rho_{0}
+4\theta_{0}^{3})}\vec{u}_{0}\cdot\nabla_{x}\theta_{0}-\frac{\rho_{0}\theta_{0}}{(\rho_{0}+4\theta_{0}^{3})}\mathrm{div}_{x}\vec{u}_{0}
\\&+\frac{4\theta_{0}^{2}}{3(\rho_{0}+4\theta_{0}^{3})}|\nabla_{x}\theta_{0}|^{2}+\frac{4\theta_{0}^{3}}{3(\rho_{0}+4\theta_{0}^{3})}\nabla_{x}\theta_{0}\cdot\vec u_{0},~~\mathrm{in} \ \ (0,T)\times\mathbb{T}^{3}, \\
&\rho_{0}(0, \vec{x})=\rho^{0}(\vec x),\;\;\vec{u}_{0}(0, \vec{x})=\vec{u}^{0}(\vec x),\ \ \theta_{0}(0, \vec{x})=\theta_{0}^{0}(\vec{x}),\;\;\overline {f_{0}}(0, \vec{x})=(\theta_{0}^{0})^{4}~~\mathrm{in}  \ \ \mathbb{T}^{3}.
\end{split}\right.
\end{equation}
We introduce the vector
\begin{equation}\nonumber
V_{0}=(\rho_{0}, u_{01}, u_{02}, u_{03})^{t}
\end{equation}
and the matrix
\begin{equation}\nonumber
\tilde{A}_{j}(V_{0})=\{\tilde{a}_{mn}\}_{4\times4}
\end{equation}
whose nonzero entries are given by
\begin{equation}\nonumber
\tilde{a}_{ii}=u_{0j}\ (i=1,2,3,4),\qquad
\tilde{a}_{1(j+1)}=\rho_{0},\qquad
\tilde{a}_{(j+1)1}=\frac{\theta_{0}}{\rho_{0}}.
\end{equation}
Additionally, we define $H(V_{0}, \theta_{0})=(g_{0}, g_{1}, g_{2}, g_{3})^{t}$ with
\begin{equation}\nonumber
g_{0}=0,\qquad g_{j}=-\left(\frac{4\theta_{0}^{3}}{3\rho_{0}}+1\right)\partial_{j}\theta_{0},\quad j=1,2,3.
\end{equation}
Therefore, we can rewrite $(\ref{equations about f0 epsilon and theta-1})_{1\sim 2}$ as
\begin{equation}\label{research equations-3-1}
\frac{\partial V_{0}}{\partial t}+\sum_{j=1}^{3}\tilde{A}_{j}(V_{0})\frac{\partial V_{0}}{\partial x_{j}}=H(V_{0}, \theta_{0}).
\end{equation}
We shall study the Cauchy problem for $\eqref{equations about f0 epsilon and theta-1}_{3}$ and \eqref{research equations-3-1} together with the initial data
\begin{equation}\label{research equations-3-3}
V_{0}(0, \vec x)=V^{0}(\vec x)=(\rho^{0}(\vec x), \vec u^{0}(\vec x)),~~~~~~\theta_{0}(0, \vec x)=\theta_{0}^{0}(\vec x).
\end{equation}
First, we symmetrize \eqref{research equations-3-1} by multiplying it by the symmetrizing matrix
\begin{equation}\label{xishujuzhen-1-xin}
A_{0}(V_{0})
=
\left[
\begin{array}{ccc}
(\rho_{0})^{-1}&0\\
0&\frac{\rho_{0}}{\theta_{0}}\mathbb{I}_{3\times3}
\end{array}
\right].
\end{equation}
Therefore, we shall prove the first part of Theorem \ref{zerothinterior} for the quasilinear symmetric system:
\begin{equation}\label{research equations-3-2-xin}
A_{0}(V_{0})\frac{\partial V_{0}}{\partial t}+\sum_{j=1}^{3}A_{j}(V_{0})\frac{\partial V_{0}}{\partial x_{j}}=F(V_{0}, \theta_{0})
\end{equation}
coupled to $\eqref{equations about f0 epsilon and theta-1}_{4}$, where $A_{j}(V_{0}):=A_{0}(V_{0})\tilde{A}_{j}(V_{0})$ is symmetric, and
\begin{equation}\nonumber
F(V_{0}, \theta_{0})=A_{0}(V_{0})H(V_{0}, \theta_{0})=(l_{0}, l_{1}, l_{2}, l_{3})^{t}
\end{equation}
with $l_{0}=0$, and for $j=1, 2, 3$,
\begin{equation}\nonumber
l_{j}=-\Big(\frac{4\theta_{0}^{2}}{3}+\frac{\rho_{0}}{\theta_{0}}\Big)\partial_{j}\theta_{0}.
\end{equation}

For the remainder of the proof concerning local existence, uniqueness, and uniform estimates for \eqref{equations about f0 epsilon and theta}, we refer to \cite{yuanmoxing-bu-jia3jia}. It should be noted that while the local existence result in \cite{yuanmoxing-bu-jia3jia} was derived for the system \eqref{research equations} in the non-equilibrium regime, the underlying method carries over to our present setting.

\noindent\textbf{Step 2.} Construction of first-order terms.

Henceforth, let $\rho_{1}^{0}(\vec x)$, $\vec{u}_{1}^{0}(\vec x)$, $\theta_{1}^{0}(\vec x)$ and $f_{1}^{0}(\vec x, \vec w)$ denote the respective initial data of $\rho_{1}(t, \vec x)$, $\vec{u}_{1}(t, \vec x)$, $\theta_{1}(t, \vec w)$ and $f_{1}(t, \vec w, \vec w)$. The equations for $\rho_{I, 1}, \vec{u}_{I, 1}$ are then defined as follows:
\begin{equation}\label{equation about theta I1}\left\{
\begin{split}
&\frac{\partial \rho_{I, 1}}{\partial\tau}=0,~~\frac{\partial \vec u_{I, 1}}{\partial\tau}=\frac{1}{\rho^{0}}\langle\vec w(f_{I, 0}-\overline {f_{I, 0}})\rangle,\\
&(\rho_{I, 1}(0, \vec x), \vec u_{I, 1}(0, \vec x))=(-\rho_{1}^{0}(\vec x), -\vec u_{1}^{0}(\vec x)),\\
&\lim_{\tau\rightarrow\infty}(\rho_{I, 1}(\tau, \vec x), \vec{u}_{I, 1}(\tau, \vec x))=(0, 0),\\
\end{split}\right.
\end{equation}
combined \eqref{biaoda1}, \eqref{biaoda2} and the fact that $\rho_{I, 0}=0$ and $A_{0}=\rho^{0}$.

{\lem\label{rhoulayer1}
The problem \eqref{equation about theta I1} has a unique solution $(\rho_{I, 1}, \vec{u}_{I, 1})\in (C^{1}([0, \infty); L_{\vec w}^{\infty}H_{\vec x}^{2N+2})\cap L^{1}([0, \infty); L_{\vec w}^{\infty}H_{\vec x}^{2N+2}))\times (C^{1}([0, \infty); H_{\vec x}^{2N+2})\cap L^{1}([0, \infty); H_{\vec x}^{2N+2}))$. Furthermore, there exists a suitably small constant $\sigma_{11}>0$ such that
\begin{equation}\label{ineq-theta01}
\|e^{\sigma_{11}\tau}\rho_{I, 1}\|_{L^{\infty}([0, \infty); H_{\vec x}^{2N+2})}+\|e^{\sigma_{11}\tau}\vec{u}_{I, 1}\|_{L^{\infty}([0, \infty); H_{\vec x}^{2N+2})}+\|\vec{u}_{1}^{0}(\vec x)\|_{H_{\vec x}^{2N+2}}\leq C.
\end{equation}}
\noindent\textbf{Proof.} After a careful calculation, we have $\rho_{I, 1}=0$ and $\rho_{1}(0, \vec x)=\rho_{1}^{0}(\vec x)=0$. By $\eqref{equation about theta I1}_{2}$, we have
\begin{equation}\label{uI1}
\vec{u}_{I, 1}(\tau, \vec x)=-\vec u_{1}^{0}(\vec x)+\int_{0}^{\tau}\frac{1}{\rho^{0}}\langle\vec w(f_{I, 0}-\overline {f_{I, 0}})\rangle\mathrm{d}s.
\end{equation}
Letting $\tau\rightarrow 0$, we have
$\vec u_{1}^{0}(\vec x)=\int_{0}^{\infty}\frac{1}{\rho^{0}}\langle\vec w(f_{I, 0}-\overline {f_{I, 0}})\rangle\mathrm{d}s\in H_{\vec x}^{2N+2}$
and $\|\vec{u}_{I, 1}\|_{L^{\infty}([0, \infty); H_{\vec x}^{2N+2})}+\|\vec{u}_{1}^{0}(\vec x)\|_{H_{\vec x}^{2N+2}}\leq C\eta.$
Substituting \eqref{fI0biaodashi} into \eqref{uI1}, we derive
$\vec{u}_{I, 1}(\tau, \vec x)=-\vec u_{1}^{0}(\vec x)+\frac{1}{\rho^{0}}(1-e^{-\tau})\langle\vec w h\rangle$
and
$\vec u_{1}^{0}(\vec x)=\frac{1}{\rho^{0}}\langle\vec w h\rangle.$
So,
\begin{equation}\label{uI1-jia}
\vec{u}_{I, 1}(\tau, \vec x)=-\frac{e^{-\tau}}{\rho^{0}}\langle\vec w h\rangle,
\end{equation}
which implies the desired exponential decay estimate.

The first-order initial layer $(f_{I, 1}, \theta_{I, 1})$ is defined as
\begin{equation}\label{equations about f I1,theta I1}\left\{
\begin{split}
&\vec w\cdot\nabla_{x}f_{I, 0}+\frac{\partial f_{I, 1}}{\partial\tau}+f_{I, 1}=B_{1}(\theta_{0}^{0}+\theta_{I, 0}; \theta_{1}^{0}+\theta_{I, 1})-B_{1}(\theta_{0}^{0}; \theta_{1}^{0}),\\
&\frac{\partial \theta_{I, 1}}{\partial\tau}=\frac{1}{\rho^{0}}(\overline{f_{I, 1}}-B_{1}(\theta_{0}^{0}+\theta_{I, 0}; \theta_{1}^{0}+\theta_{I, 1})+B_{1}(\theta_{0}^{0}; \theta_{1}^{0})),\\
&f_{I, 1}(0, \vec{x}, \vec w)=-f_{1}^{0}(\vec x, \vec w),\ \ \theta_{I, 1}(0, \vec{x})=-\theta_{1}^{0}(\vec x),\\
&\lim_{\tau\rightarrow\infty}f_{I, 1}(\tau, \vec{x}, \vec{w})=0,\ \ \lim_{\tau\rightarrow\infty}\theta_{I, 1}(\tau, \vec{x})=0,
\end{split}\right.
\end{equation}
combining \eqref{biaoda3}, \eqref{biaoda4} and the fact that $\rho_{I, 0}=0$ and $A_{0}=\rho^{0}$.

{\lem\label{firstinitiallayer}
The problem \eqref{equations about f I1,theta I1} has a unique solution $(f_{I, 1}, \theta_{I, 1})\in (C^{1}([0, \infty); L_{\vec w}^{\infty}H_{\vec x}^{2N+1})\cap L^{1}([0, \infty); L_{\vec w}^{\infty}H_{\vec x}^{2N+1}))\times (C^{1}([0, \infty); H_{\vec x}^{2N+1})\cap L^{1}([0, \infty); H_{\vec x}^{2N+1}))$. Furthermore, there exists a suitably small constant $\sigma_{12}>0$ such that
\begin{equation}\label{ineq-theta11}
\|e^{\sigma_{12}\tau}f_{I, 1}\|_{L^{\infty}([0, \infty); L_{\vec w}^{\infty}H_{\vec x}^{2N+1})}+\|e^{\sigma_{12}\tau}\theta_{I, 1}\|_{L^{\infty}([0, \infty); H_{\vec x}^{2N+1})}+\|\theta_{1}^{0}(\vec x)\|_{H_{\vec x}^{2N+1}}\leq C
\end{equation}
and
\begin{equation}\label{ineq-theta12}
\|e^{\sigma_{12}\tau}f_{I, 1}\|_{L^{1}([0, \infty); L_{\vec w}^{\infty}H_{\vec x}^{2N+1})}+\|e^{\sigma_{12}\tau}\theta_{I, 1}\|_{L^{1}([0, \infty); H_{\vec x}^{2N+1})}\leq C.
\end{equation}}

\noindent\textbf{Proof.} From the equations $\eqref{equations about f I1,theta I1}_{1}$ and $\eqref{equations about f I1,theta I1}_{2}$, we can derive 
\begin{equation}\nonumber
\partial_{\tau}(\overline {f_{I, 1}}+\rho^{0}\theta_{I, 1})=-\langle\vec w\cdot\nabla_{x}f_{I, 0}\rangle.
\end{equation}
Then, 
\begin{equation}\nonumber
\overline {f_{I, 1}}(\tau, \vec x)+\rho^{0}(\vec x)\theta_{I, 1}(\tau, \vec x)=-\overline{f_{1}^{0}}(\vec x)-\rho^{0}(\vec x)\theta_{1}^{0}(\vec x)-\int_{0}^{\tau}\langle\vec w\cdot\nabla_{x}f_{I, 0}\rangle\mathrm{d}s
\end{equation}
By the fact that $f_{I, 1}, \theta_{I, 1}\rightarrow 0$ as $\tau\rightarrow\infty$, we can obtain
\begin{equation}\label{daoshuweiling-1}
\overline {f_{I, 1}}+\rho^{0}\theta_{I, 1}\equiv 0,
\end{equation}
which further implies
\begin{equation}\nonumber
\overline{f_{1}^{0}}(\vec x)+\rho^{0}(\vec x)\theta_{1}^{0}(\vec x)=-\int_{0}^{\infty}\langle\vec w\cdot\nabla_{x}f_{I, 0}\rangle\mathrm{d}s:=l_{1}(\vec x).
\end{equation}
In view of \eqref{fk} for $k=1$ and the fact that $B_{1}(\theta_{0}^{0}; \theta_{1}^{0})=4(\theta^{0}_{0})^{3}\theta_{1}^{0}(\vec x)$, we derive the following equality:
\begin{equation}\nonumber
4(\theta_{0}^{0})^{3}\theta_{1}^{0}+\rho^{0}\theta_{1}^{0}=l_{1}(\vec x).
\end{equation}
Then, we can directly get 
\begin{equation}\nonumber
\theta_{1}^{0}(\vec x)=\frac{1}{4(\theta_{0}^{0})^{3}+\rho^{0}}l_{1}(\vec x)\in H_{\vec x}^{2N+1}.
\end{equation}
Simultaneously, we can get $f_{1}^{0}\in L^{\infty}_{\vec w}H_{\vec x}^{2N+1}$ by \eqref{fk} for $k=1$.

We can write $\eqref{equations about f I1,theta I1}_{2}$ in the following form
\begin{equation}\nonumber
\frac{\partial \theta_{I, 1}}{\partial\tau}+\Big(1+\frac{4}{\rho^{0}}(\theta_{0}^{0}+\theta_{I, 0})^{3}\Big)\theta_{I, 1}=-\frac{4}{\rho^{0}}(\theta_{0}^{0}+\theta_{I, 0})^{3}\theta_{1}^{0}+\frac{4}{\rho^{0}}(\theta_{0}^{0})^{3}\theta_{1}^{0}.
\end{equation}
Then, we have
\begin{equation}\nonumber
\theta_{I, 1}=e^{-\Big(1+\frac{4}{\rho^{0}}(\theta_{0}^{0}+\theta_{I, 0})^{3}\Big)\tau}(-\theta_{1}^{0})+\int_{0}^{\tau}e^{-\Big(1+\frac{4}{\rho^{0}}(\theta_{0}^{0}+\theta_{I, 0})^{3}\Big)(\tau-s)}(-\frac{4}{\rho^{0}}(\theta_{0}^{0}+\theta_{I, 0})^{3}\theta_{1}^{0}+\frac{4}{\rho^{0}}(\theta_{0}^{0})^{3}\theta_{1}^{0})\mathrm{d}s,
\end{equation}
which implies the global existence of $\theta_{I, 1}$ and $\theta_{I, 1}\in C^{1}([0, \infty); H_{\vec x}^{2N+1})$. 

According to $\eqref{equations about f I0,theta I0}_{1}$ and $\eqref{equations about f I0,theta I0}_{3}$, we can write $f_{I, 1}$ in the following form
\begin{equation}\nonumber
f_{I, 1}=-e^{-\tau}f_{1}^{0}+\int_{0}^{\tau}e^{s-\tau}(B_{1}(\theta_{0}^{0}+\theta_{I, 0}; \theta_{1}^{0}+\theta_{I, 1})-B_{1}(\theta_{0}^{0}; \theta_{1}^{0})-\vec w\cdot\nabla_{x}f_{I, 0})\mathrm{d}s,
\end{equation}
which further implies the existence solution $f_{I, 1}\in C^{1}([0, \infty); L^{\infty}_{\vec w}H_{\vec x}^{2N+1}))$. \hfill$\Box$

Following the same line of reasoning as in Lemma \ref{ini-0}, we arrive at the estimates \eqref{ineq-theta11} and \eqref{ineq-theta12}.

We write the equations about $\rho_{1}, \vec{u}_{1}, \theta_{1}$ and $f_{1}$ as follows
\begin{equation}\label{equations about theta 1 overline f epsilon 1-jia}\left\{
\begin{split}
&f_{1}(t, x, w)=B_{1}(\theta_{0}; \theta_{1})-\vec w\cdot\nabla_{x}B(\theta_{0}),\\
&\frac{\partial\rho_{1}}{\partial t}+\vec{u}_{0}\cdot\nabla_{x}\rho_{1}+\rho_{0}\mathrm{div}_{x}\vec{u}_{1}=-\rho_{1}\mathrm{div}_{x}\vec{u}_{0}-\vec{u}_{1}\cdot\nabla_{x}\rho_{0}\ \ \mathrm{in}\ \ (0, T]\times\mathbb{T}^{3},\\
&\frac{\partial\vec{u}_{1}}{\partial t}+\vec{u}_{0}\cdot\nabla_{x}\vec{u}_{1}+\Big(1+\frac{\theta_{0}^{3}}{3\rho_{0}}\Big)\nabla_{x}\theta_{1}+\frac{\theta_{0}}{\rho_{0}}\nabla_{x}\rho_{1}
=-\frac{\rho_{1}}{\rho_{0}}\frac{\partial \vec{u}_{0}}{\partial t}-\vec{u}_{1}\cdot\nabla_{x}\vec{u}_{0}-\frac{\rho_{1}}{\rho_{0}}\vec{u}_{0}\cdot\nabla_{x}\vec{u}_{0}
\\&-\frac{\rho_{1}}{\rho_{0}}\nabla_{x}\theta_{0}-\frac{\theta_{1}}{\rho_{0}}(\nabla_{x}\rho_{0}+\theta_{0}^{2}\nabla_{x}\theta_{0})-\frac{1}{3}\nabla_{x}\theta_{0}^{4}\ \ \mathrm{in}\ \ (0, T]\times\mathbb{T}^{3},\\
&(\rho_{0}+\theta_{0}^{3})\frac{\partial\theta_{1}}{\partial t}+\theta_{1}\partial_{t}(\theta_{0}^{3})+\rho_{1}\partial_{t}\theta_{0}+\rho_{0}\vec{u}_{0}\cdot\nabla_{x}\theta_{1}+\rho_{0}\vec{u}_{1}\cdot\nabla_{x}\theta_{0}
+\rho_{1}\vec{u}_{0}\cdot\nabla_{x}\theta_{0}+\rho_{0}\theta_{0}\mathrm{div}_{x}\vec{u}_{1}\\&+\rho_{1}\theta_{0}\mathrm{div}_{x}\vec{u}_{0}+\rho_{0}\theta_{1}\mathrm{div}_{x}\vec{u}_{0}-4\theta_{0}^{3}\Delta_{x}\theta_{1}
-12\theta_{0}^{2}\nabla_{x}\theta_{0}\cdot\nabla_{x}\theta_{1}-4\theta_{1}\Delta_{x}\theta_{0}^{3}
-\frac{\theta_{0}^{3}}{3}\nabla_{x}\theta_{1}\cdot\vec{u}_{0}-\\&\theta_{1}\theta_{0}^{2}\nabla_{x}\theta_{0}\cdot\vec{u}_{0}
+\frac{4\theta_{0}^{3}}{3}\nabla_{x}\theta_{0}\cdot\vec{u}_{1}=-\partial_{t}\theta_{0}^{4}+\frac{1}{3}\Delta_{x}\theta_{0}^{4}+\frac{1}{3}\nabla_{x}\overline{f_{0}}\cdot\vec{u}_{1}
-\frac{1}{3}\nabla_{x}\theta_{0}^{4}\cdot\vec{u}_{0}\ \ \mathrm{in}\ \ (0, T]\times\mathbb{T}^{3},\\
&\rho_{1}(0, \vec{x})=0,\;\;\vec{u}_{1}(0, \vec{x})=\vec u_{1}^{0}(\vec x)~~\mathrm{in}  \ \ \mathbb{T}^{3},\\
&\theta_{1}(0, \vec{x})=\theta_{1}^{0}(\vec x),\;\;f_{1}(0, \vec{x}, \vec w)=B_{1}(\theta_{0}^{0}; \theta_{1}^{0})-\vec w\cdot\nabla_{x}B(\theta_{0}^{0})~~\mathrm{in}  \ \ \mathbb{T}^{3}.
\end{split}\right.
\end{equation}

{\lem\label{firstinterior}
The problem \eqref{equations about theta 1 overline f epsilon 1-jia} has a unique solution on $[0, T]$, and $(\rho_{1}, \vec u_{1}, \theta_{1}, \overline{f_{1}})\in C([0, T]; H_{\vec x}^{2N+1})\times C([0, T]; H_{\vec x}^{2N+1})\times C([0, T]; H_{\vec x}^{2N+1})\cap L^{2}(0, T; H_{\vec x}^{2N+2})\times C([0, T]; L_{\vec w}^{\infty}H_{\vec x}^{2N+1})\\\cap L^{2}(0, T; L_{\vec w}^{\infty}H_{\vec x}^{2N+2})$. Furthermore, we have
\begin{equation}\label{ineq-theta1}
\begin{aligned}
&\|(\rho_{1}, \vec u_{1}, \theta_{1})\|_{L^{\infty}(0, T; H_{\vec x}^{2N+1})}+\|\theta_{1}\|_{L^{2}(0, T; H_{\vec x}^{2N+2}))}+\|f_{1}\|_{L^{\infty}(0, T; L_{\vec w}^{\infty}H_{\vec x}^{2N+1})}\\&+\|f_{1}\|_{L^{2}(0, T; L_{\vec w}^{\infty}H_{\vec x}^{2N+2})}\leq C.
\end{aligned}
\end{equation}}

Note that $B_{1}(\theta_{0}; \theta_{1})=4\theta_{0}^{3}\theta_{1}$. Similar to the proof of the first part of Lemma \ref{zerothinterior}, we can write system (\ref{equations about theta 1 overline f epsilon 1-jia}) as follows
\begin{equation}\label{equations about theta 1 overline f epsilon 1 jia1}\left\{
\begin{split}
&f_{1}(t, x, w)=B_{1}(\theta_{0}; \theta_{1})-\vec w\cdot\nabla_{x}B(\theta_{0}),\\
&A_{0}(V_{0})\frac{\partial V_{1}}{\partial t}+\sum_{j=1}^{3}A_{j}(V_{0})\frac{\partial V_{1}}{\partial x_{j}}+D(V_{0})V_{1}=F(V_{0}, \theta_{1})\ \ \mathrm{in}\ \ (0, T]\times\mathbb{T}^{3},\\
&\frac{\partial \theta_{1}}{\partial t}-\frac{4\rho_{0}\theta_{0}^{3}}{3(\rho_{0}+4\theta_{0}^{3})}\Delta_{x}\theta_{1}=\frac{\rho_{0}}{\rho_{0}+4\theta_{0}^{3}}G(V_{0}, V_{1}, \theta_{0}, \theta_{1})\ \ \mathrm{in}\ \ (0, T]\times\mathbb{T}^{3},\\
&V_{1}(0, \vec x)=(0, \vec{u}_{1}^{0})^{t},\ \ \theta_{1}(0, \vec x)=\theta_{1}^{0}(\vec x),~~\mathrm{in}  \ \ \mathbb{T}^{3},\\
\end{split}\right.
\end{equation}
where 
\begin{equation}\nonumber
V_{0}=(\rho_{0}, \vec{u}_{0}), V_{1}=(\rho_{1}, \vec{u}_{1}), B_{1}=B_{1}(\theta_{0}, \theta_{1}), F(V_{0}, \theta_{1})=\frac{\rho_{0}}{\theta_{0}}(0, 4\nabla_{x}(\theta_{0}^{3}\theta_{1}))^{t},
\end{equation}
\begin{equation}\nonumber
\begin{aligned}
G(V_{0}, V_{1}, \theta_{0}, \theta_{1})=&-\theta_{1}\partial_{t}(\theta_{0}^{3})-\rho_{1}\partial_{t}\theta_{0}-\rho_{0}\vec{u}_{0}\cdot\nabla_{x}\theta_{1}-\rho_{0}\vec{u}_{1}\cdot\nabla_{x}\theta_{0}
-\rho_{0}\theta_{0}\mathrm{div}_{x}\vec{u}_{1}-\rho_{1}\theta_{0}\mathrm{div}_{x}\vec{u}_{0}\\&-\rho_{0}\theta_{1}\mathrm{div}_{x}\vec{u}_{0}
-3\theta_{0}^{2}\nabla_{x}\theta_{0}\cdot\nabla_{x}\theta_{1}-\theta_{1}\Delta_{x}\theta_{0}^{3}
-\frac{\theta_{0}^{3}}{3}\nabla_{x}\theta_{1}\cdot\vec{u}_{0}-\theta_{1}\theta_{0}^{2}\nabla_{x}\theta_{0}\cdot\vec{u}_{0}
\\&-\frac{4\theta_{0}^{3}}{3}\nabla_{x}\theta_{0}\cdot\vec{u}_{1}-\partial_{t}\theta_{0}^{4}+\frac{1}{3}\Delta_{x}\theta_{0}^{4}+\frac{1}{3}\nabla_{x}\overline{f_{0}}\cdot\vec{u}_{1}
-\frac{1}{3}\nabla_{x}\theta_{0}^{4}\cdot\vec{u}_{0},
\end{aligned}
\end{equation}
$D(V_{0})$ is a $4\times 4$ matrix and belongs to $C([0, T]; H_{\vec x}^{2N+1})$, $A_{0}(V_{0})$ defined the same as \eqref{xishujuzhen-1-xin} and $A_{j}$ is a symmetric matrix defined similarly as in (\ref{research equations-3-2-xin}).

The local existence of smooth solutions can be established by an iterative method as in Lemma \ref{zerothinterior}. To close Lemma \ref{firstinterior}, one only needs to establish the a priori estimates.

Define
\begin{equation}\label{def Et1}
E_{1}(t):=\frac{1}{2}\int_{\mathbb{T}^{3}}A_{0}(V_{0})D_{x}^{\gamma}V_{1}\cdot D_{x}^{\gamma}V_{1}\mathrm{d}\vec x
\end{equation}
and
\begin{equation}\label{def Et2}
E_{2}(t):=\frac{1}{2}\int_{\mathbb{T}^{3}}D_{x}^{\gamma}\theta_{1}\cdot D_{x}^{\gamma}\theta_{1}\mathrm{d}\vec x,
\end{equation}
where $0\leq\gamma\leq 2N+1$.

Upon applying the operator to both sides of \eqref{equations about theta 1 overline f epsilon 1 jia1}{2} and \eqref{equations about theta 1 overline f epsilon 1 jia1}{3}, and then taking the
$L^{2}((0, t)\times\mathbb{T}^{3})$-inner product of the resulting equations with $D_{x}^{\gamma}V_{1}$ and $D_{x}^{\gamma}\theta_{1}$, respectively, we arrive at
\begin{equation}\label{gr-1}
\begin{aligned}
E_{1}(t)-E_{1}(0)\leq C(a, b, r^{0})(\|V_{1}\|_{L^{2}(0, t; H_{\vec x}^{2N+1})}^{2}+\|\theta_{1}\|_{L^{2}(0, t; H_{\vec x}^{2N+1})}^{2}+\delta\|D_{x}\theta_{1}\|_{L^{2}(0, t; H_{\vec x}^{2N+1})}^{2})
\end{aligned}
\end{equation}
and
\begin{equation}\label{gr-2}
\begin{aligned}
E_{2}(t)-E_{2}(0)+\underline{C}\|\nabla_{x}\theta_{1}\|_{L^{2}(0, t; H_{\vec x}^{2N+1})}^{2}\leq& C(a, b, r^{0})(\|V_{1}\|_{L^{2}(0, t; H_{\vec x}^{2N+1})}^{2}+\|\theta_{1}\|_{L^{2}(0, t; H_{\vec x}^{2N+1})}^{2}\\&+\delta\|D_{x}\theta_{1}\|_{L^{2}(0, t; H_{\vec x}^{2N+1})}^{2}),
\end{aligned}
\end{equation}
where $\underline{C}=\underline{C}(a, V_{0}, \theta_{0})$ is some small positive constant.

Choosing $\delta$ such that $C(a, V_{0}, \theta_{0})\delta=\underline{C}/4$ and summing up from $\gamma=0$ to $2N+1$, we have
\begin{equation}\label{gr-3}
\begin{aligned}
&\|V_{1}(t)\|_{H_{\vec x}^{2N+1}}^{2}+\|\theta_{1}(t)\|_{H_{\vec x}^{2N+1}}^{2}+\|D_{x}\theta_{1}\|_{L^{2}(0, t; H_{\vec x}^{2N+1})}^{2}\\\leq& C(a, V_{0}, \theta_{0})(\|V_{1}(0)\|^{2}_{H_{\vec x}^{2N+1}}+\|\theta_{1}(0)\|^{2}_{H_{\vec x}^{2N+1}}+\|V_{1}\|_{L^{2}(0, t; H_{\vec x}^{2N+1})}^{2}+\|\theta_{1}\|_{L^{2}(0, t; H_{\vec x}^{2N+1})}^{2}).
\end{aligned}
\end{equation}
Then, we have the following priori estimates
\begin{equation}\label{gr-2}
\begin{aligned}
&\|V_{1}(t)\|_{L^{\infty}(0, t; H_{\vec x}^{2N+1})}+\|\theta_{1}(t)\|_{L^{\infty}(0, t; H_{\vec x}^{2N+1})}+
\|D_{x}\theta_{1}\|_{L^{2}(0, t; H_{\vec x}^{2N+1})}\leq C(a, V_{0}, \theta_{0}, \theta_{1}^{0}, \vec u_{1}^{0}, t),
\end{aligned}
\end{equation}
by the Gronwall's inequality. Since $0<t\leq T\leq 1$, we derive the fact $C(a, V_{0}, \theta_{0}, \theta_{1}^{0}, \vec u_{1}^{0},t)\\\leq \tilde C$, where $\tilde C$ is a positive constant independent of $t$.

So, the local existence of \eqref{equations about theta 1 overline f epsilon 1 jia1} can be extended to the interval $[0, T]$ by the continuation method.

\noindent\textbf{Step 3.} Construction of kth-order terms.

In the following, we use $\rho_{k}^{0}(\vec x)$, $\vec{u}_{k}^{0}(\vec x)$, $\theta_{k}^{0}(\vec x)$ and $f_{k}^{0}(\vec x, \vec w)$ to denote the initial data of $\rho_{k}(t, \vec x)$, $\vec{u}_{k}(t, \vec x)$, $\theta_{k}(t, \vec w)$ and $f_{k}(t, \vec w, \vec w)$. We define the equation about $\rho_{I, k}, \vec{u}_{I, k}$ as follows
\begin{equation}\label{inilayer-k1}\left\{
\begin{split}
&\partial_{\tau}\rho_{I, k}=G_{k}^{1},\partial_{\tau}\vec{u}_{I, k}=G_{k}^{2},\\
&(\rho_{I, k}(0, \vec x), \vec u_{I, k}(0, \vec x))=(-\rho_{k}^{0}(\vec x), -\vec u_{k}^{0}(\vec x)),\\
&\lim_{\tau\rightarrow\infty}(\rho_{I, k}(\tau, \vec x), \vec{u}_{I, k}(\tau, \vec x))=(0, 0),\\
\end{split}\right.
\end{equation}
where
\begin{equation}\nonumber
\begin{aligned}
G_{k}^{1}=-&\sum_{\substack{
  i+j=k-2, \\
  0\leq i,j\leq k-2
}}\{(\vec{E}_{i}+\vec{u}_{I, i})\cdot\nabla_{x}(A_{j}+\rho_{I, j})-\vec{E}_{i}\cdot\nabla_{x}A_{j}\}\\-&\sum_{\substack{
  i+j=k-2, \\
  0\leq i,j\leq k-2
}}\{(A_{j}+\rho_{I, j})\mathrm{div}_{x}(\vec{E}_{i}+\vec{u}_{I, i})-A_{j}\mathrm{div}_{x}\vec{E}_{i}\}
\end{aligned}
\end{equation}
and
\begin{equation}\nonumber
\begin{aligned}
G_{k}^{2}=&-\frac{1}{\rho^{0}}\Big(\sum_{\substack{
  i+j=k-2, \\
  0\leq i,j\leq k-2
}}\{(A_{i}+\rho_{I, i})\partial_{\tau}(\vec{E}_{j}+\vec{u}_{I, j})-A_{i}\partial_{\tau}\vec{E}_{j}\}
-\sum_{\substack{
  i+j+l=k-2, \\
  0\leq i, j, l\leq k-2
}}\{(A_{i}+\rho_{I, i})(\vec{E}_{j}+\vec{u}_{I, j})\\&\cdot\nabla_{x}(\vec{E}_{l}+\vec{u}_{I, l})-A_{i}\vec{E}_{j}\cdot\nabla_{x}\vec{E}_{l}\}-\sum_{\substack{
  i+j=k-2, \\
  0\leq i,j\leq k-2
}}\{(A_{i}+\rho_{I, i})\nabla_{x}(D_{j}+\theta_{I, j})-A_{i}\nabla_{x}D_{j}\}\\&-\sum_{\substack{
  i+j=k-2, \\
  0\leq i,j\leq k-2
}}\{(D_{j}+\theta_{I, j})\cdot\nabla_{x}(A_{i}+\rho_{I, i})-D_{j}\cdot\nabla_{x}A_{i}\}+\langle\vec{w}f_{I, k-1}\rangle+\langle\vec{w}f_{I, k-3}\rangle\Big),
\end{aligned}
\end{equation}
combining \eqref{biaoda1}, \eqref{biaoda2} and the facts that $\vec{u}_{I, 0}=0, \rho_{I, 0}=0, A_{0}=\rho^{0}$ and $\partial_{\tau}\vec{B}_{0}=0$.

{\lem\label{kthinitiallayer}
The problem \eqref{inilayer-k1} has a unique solution $(\rho_{I, k}, \vec{u}_{I, k})\in (C^{1}([0, \infty); H_{\vec x}^{2N+4-k})\cap L^{1}([0, \infty); H_{\vec x}^{2N+4-k}))\times (C^{1}([0, \infty); H_{\vec x}^{2N+4-2k})\cap L^{1}([0, \infty); H_{\vec x}^{2N+4-2k}))$. Furthermore, there exists a suitably small constant $\sigma_{k1}>0$ such that
\begin{equation}\label{ineq-thetak-jia}
\|e^{\sigma_{k1}\tau}\rho_{I, k}\|_{L^{\infty}([0, \infty); H_{\vec x}^{2N+4-2k})}+\|e^{\sigma_{k1}\tau}\vec{u}_{I, k}\|_{L^{\infty}([0, \infty); H_{\vec x}^{2N+4-2k})}+\|\rho_{k}^{0}\|_{H_{\vec x}^{2N+4-2k}}+\|\vec{u}_{k}^{0}\|_{H_{\vec x}^{2N+4-2k}}\leq C.
\end{equation}}

\noindent\textbf{Proof.} Assume 
\begin{equation}\label{ineq-thetak-1}
\|e^{\sigma_{i1}\tau}\rho_{I, i}\|_{L^{\infty}([0, \infty); H_{\vec x}^{2N+4-2i})}+\|e^{\sigma_{i1}\tau}\vec{u}_{I, i}\|_{L^{\infty}([0, \infty); H_{\vec x}^{2N+4-2i})}+\|\rho_{i}^{0}\|_{H_{\vec x}^{2N+4-2i}}+\|\vec{u}_{i}^{0}\|_{H_{\vec x}^{2N+4-2i}}\leq C
\end{equation}
where $1\leq i\leq k-1$, $\sigma_{i1}>0$, suitably small.

Then, we can verify that $G_{k}^{1}\in C([0, \infty); H_{\vec x}^{2N+4-2k})$, $G_{k}^{2}\in C([0, \infty); H_{\vec x}^{2N+6-2k})$and
\begin{equation}\label{expo1}
\|e^{\sigma_{k1}\tau}G_{k}^{1}\|_{L^{\infty}([0, \infty); H_{\vec x}^{2N+4-k})}, \|e^{\sigma_{k1}\tau}G_{k}^{2}\|_{L^{\infty}([0, \infty); H_{\vec x}^{2N+6-2k})}\leq C,
\end{equation}
where $0<\sigma_{k}<\sigma_{k-1}$.

According to \eqref{inilayer-k1}, we have
\begin{equation}\label{111-1}
\rho_{I, k}=\int_{\infty}^{\tau}G_{k}^{1}\mathrm{d}s=O(e^{-\sigma_{k1}\tau}),\ \
\vec{u}_{I, k}=\int_{\infty}^{\tau}G_{k}^{2}\mathrm{d}s=O(e^{-\sigma_{k1}\tau}),
\end{equation}
which implies the existence of system \eqref{inilayer-k1}. Furthermore, we also have
\begin{equation}\label{111-1}
\rho_{k}^{0}=\int_{0}^{\infty}G_{k}^{1}\mathrm{d}s,\ \
\vec{u}_{k}^{0}=\int_{0}^{\infty}G_{k}^{2}\mathrm{d}s.
\end{equation}
Then, the estimate \eqref{ineq-thetak-jia} can be verified.

The kth-order initial layer $(f_{I, k}, \theta_{I, k})$, $k\geq 2$, is defined as
\begin{equation}\label{equations about f Ik,theta Ik}\left\{
\begin{split}
&\rho^{0}\partial_{\tau}\theta_{I, k}-\overline{f_{I, k}}+\sum_{\substack{
  i+j+l+m=k, \\
  i,j,l,m\geq 0
}}((D_{i}+\theta_{I, i})(D_{j}+\theta_{I, j})(D_{l}+\theta_{I, l})(D_{m}+\theta_{I, m})-D_{i}D_{j}D_{l}D_{m})
=G_{k}^{3},\\
&\partial_{\tau}f_{I, k}+f_{I, k}-\sum_{\substack{
  i+j+l+m=k, \\
  i,j,l,m\geq 0
}}((D_{i}+\theta_{I, i})(D_{j}+\theta_{I, j})(D_{l}+\theta_{I, l})(D_{m}+\theta_{I, m})-D_{i}D_{j}D_{l}D_{m}))=G_{k}^{4},\\
&f_{I, k}(0, \vec{x}, \vec w)=-f_{k}^{0}(\vec x, \vec w),\theta_{I, k}(0, \vec{x})=-\theta_{k}^{0}(\vec x),\\
&\lim_{\tau\rightarrow\infty}f_{I, k}(\tau, \vec{x}, \vec{w})=0,\ \ \lim_{\tau\rightarrow\infty}\theta_{I, k}(\tau, \vec{x})=0,
\end{split}\right.
\end{equation}
where
\begin{equation}\nonumber
\begin{aligned}
G_{k}^{3}=&-((A_{k}+\rho_{I, k})\partial_{\tau}\theta_{I, 0}+\sum_{\substack{
  i+j=k, \\
  0\leq i, j\leq k-1
}}((A_{i}+\rho_{I, i})\partial_{\tau}(D_{j}+\theta_{I, j})-A_{i}\partial_{\tau}D_{j})+\sum_{\substack{
  i+j+l=k-2, \\
  0\leq i, j, l\leq k-2
}}((A_{i}\\&+\rho_{I, i})(\vec{E}_{j}+\vec{u}_{I, j})\cdot\nabla_{x}(D_{l}+\theta_{I, l})-A_{i}\vec{E}_{j}\cdot\nabla_{x}D_{l})
+\sum_{\substack{
  i+j+l=k-2, \\
  0\leq i, j, l\leq k-2
}}((A_{i}+\rho_{I, i})(D_{l}+\theta_{I, l})\\&\mathrm{div}_{x}(\vec{E}_{j}+\vec{u}_{I, j})-A_{i}D_{l}\mathrm{div}_{x}\vec{E}_{j})
+\sum_{\substack{
  i+j=k-1, \\
  0\leq i, j\leq k-1
}}(\langle\vec{w}(J_{i}+f_{I, i})\rangle\cdot(\vec{E}_{j}+\vec{u}_{I, j})-\langle\vec{w}J_{i}\rangle\cdot\vec{E}_{j})
\\&+\sum_{\substack{
  i+j=k-3, \\
  0\leq i, j\leq k-3
}}(\langle\vec{w}(J_{i}+f_{I, i})\rangle\cdot(\vec{E}_{j}+\vec{u}_{I, j})-\langle\vec{w}J_{i}\rangle\cdot\vec{E}_{j}))
\end{aligned}
\end{equation}
and
\begin{equation}\nonumber
\begin{aligned}
G_{k}^{4}=&-(\vec{w}\cdot\nabla_{x}f_{I, k-1}+f_{I, k-2}-\overline{f_{I, k-2}}),
\end{aligned}
\end{equation}
combining \eqref{biaoda3}, \eqref{biaoda4} and the fact that $\rho_{I, 0}=0$ and $A_{0}=\rho^{0}$.

{\lem\label{kthinterior}
The problem \eqref{equations about f Ik,theta Ik} has a unique solution $(f_{I, k}, \theta_{I, k})\in (C^{1}([0, \infty); L_{\vec w}^{\infty}H_{\vec x}^{2N+4-2k}))\\\cap L^{1}([0, \infty); L_{\vec w}^{\infty}H_{\vec x}^{2N+4-2k}))\times (C^{1}([0, \infty); H_{\vec x}^{2N+4-2k}))\cap L^{1}([0, \infty); H_{\vec x}^{2N+4-2k}))$. Furthermore, there exists a suitably small constant $\sigma_{k2}>0$ such that
\begin{equation}\label{ineq-thetak0}
\|\theta_{k}^{0}(\vec x)\|_{H_{\vec x}^{2N+4-2k}}+\|f_{k}^{0}(\vec x, \vec w)\|_{L_{\vec w}^{\infty}H_{\vec x}^{2N+4-2k}}\leq C
\end{equation}
and
\begin{equation}\label{ineq-thetak1}
\|e^{\sigma_{k2}\tau}f_{I, k}\|_{L^{\infty}([0, \infty); L_{\vec w}^{\infty}H_{\vec x}^{2N+4-2k})}+\|e^{\sigma_{k2}\tau}\theta_{I, k}\|_{L^{\infty}([0, \infty); H_{\vec x}^{2N+4-2k})}\leq C,
\end{equation}
where $\sigma_{k}>0$, suitably small.}

\noindent\textbf{Proof.} Assume 
\begin{equation}\label{ineq-thetak-1-jia}
\begin{aligned}
&\|e^{\sigma_{i2}\tau}(\rho_{I, i}, \vec{u}_{I, i}, \theta_{I, i})\|_{L^{\infty}([0, \infty); H_{\vec x}^{2N+4-2i})}+\|e^{\sigma_{i2}\tau}f_{I, i}\|_{L^{\infty}([0, \infty); L_{\vec w}^{\infty}H_{\vec x}^{2N+4-2i})}\\&+\|\rho_{i}^{0}, \vec{u}_{i}^{0}, \theta_{i}^{0}\|_{H_{\vec x}^{2N+4-2i}}+\|f_{i}^{0}\|_{L_{\vec w}^{\infty}H_{\vec x}^{2N+4-2i}}\leq C,
\end{aligned}
\end{equation}
where $2\leq i\leq k-1$, $\sigma_{i2}>0$, suitably small. It is worthy to note that, we can derive the existence of the second order initial layers and estimates 
\begin{equation}\label{ineq-thetak-1-jia}
\begin{aligned}
&\|e^{\sigma_{22}\tau}(\rho_{I, 2}, \vec{u}_{I, 2}, \theta_{I, 2})\|_{L^{\infty}([0, \infty); H_{\vec x}^{2N})}+\|e^{\sigma_{22}\tau}f_{I, 2}\|_{L^{\infty}([0, \infty); L_{\vec w}^{\infty}H_{\vec x}^{2N})}\\&+\|\rho_{2}^{0}, \vec{u}_{2}^{0}, \theta_{2}^{0}\|_{H_{\vec x}^{2N}}+\|f_{2}^{0}\|_{L_{\vec w}^{\infty}H_{\vec x}^{2N}}\leq C,
\end{aligned}
\end{equation}
by similar methods as lemma \ref{rhoulayer1} and \ref{firstinitiallayer}. Then, we can verify that $G_{k}^{3}\in C([0, \infty); H_{\vec x}^{2N+4-2k})$, $G_{k}^{4}\in C([0, \infty); L_{\vec w}^{\infty}H_{\vec x}^{2N+4-2k})$, and
\begin{equation}\label{expo1-jia}
\|e^{\sigma_{k}\tau}G_{k}^{3}\|_{L^{\infty}([0, \infty); H_{\vec x}^{2N+4-2k})}+\|e^{\sigma_{k}\tau}G_{k}^{4}\|_{L^{\infty}([0, \infty); L_{\vec w}^{\infty}H_{\vec x}^{2N+4-2k})}\leq C,
\end{equation}
where $0<\sigma_{k2}<\sigma_{k-1,2}$.

From the equations $\eqref{equations about f Ik,theta Ik}_{1}$ and $\eqref{equations about f Ik,theta Ik}_{2}$, we can derive 
\begin{equation}\nonumber
\partial_{\tau}(\overline {f_{I, k}}+\rho^{0}\theta_{I, k})=G_{k}^{3}+\overline{G_{k}^{4}}.
\end{equation}
Then, 
\begin{equation}\nonumber
\overline {f_{I, k}}(\tau, \vec x)+\rho^{0}(\vec x)\theta_{I, k}(\tau, \vec x)=-\overline{f_{k}^{0}}(\vec x)-\rho^{0}(\vec x)\theta_{k}^{0}(\vec x)+\int_{0}^{\tau}(G_{k}^{3}+\overline{G_{k}^{4}})(s)\mathrm{d}s.
\end{equation}
By the fact that $f_{I, 1}, \theta_{I, 1}\rightarrow 0$ as $\tau\rightarrow\infty$, we can obtain
\begin{equation}\label{daoshuweiling-1}
\overline {f_{I, k}}+\rho^{0}\theta_{I, k}\equiv 0,
\end{equation}
which further implies
$\overline{f_{k}^{0}}(\vec x)+\rho^{0}(\vec x)\theta_{k}^{0}(\vec x)=\int_{0}^{\infty}(G_{k}^{3}+\overline{G_{k}^{4}})(s)\mathrm{d}s:=l_{k}(\vec x).$
According to \eqref{fk}, we can derive the following equality
\begin{equation}\nonumber
\begin{aligned}
4(\theta_{0}^{0})^{3}\theta_{k}^{0}+\rho^{0}\theta_{k}^{0}=&l_{k}+\langle\partial_{t}f_{k-2}|_{t=0}+\vec{w}\cdot\nabla_{x}f_{k-1}^{0}+f_{k-2}^{0}-\overline{f_{k-2}^{0}}\rangle-\Big(\sum_{\substack{
  i+j+l+m=k, \\
  0\leq i, j, l, m\leq k-1
}}\theta_{i}\theta_{j}\theta_{l}\theta_{m}\Big)|_{t=0}\\:=&l_{k}'(\vec x).
\end{aligned}
\end{equation}
Then, we can directly get 
\begin{equation}\nonumber
\theta_{k}^{0}(\vec x)=\frac{1}{4(\theta_{0}^{0})^{3}+\rho^{0}}l_{k}'(\vec x)\in H_{\vec x}^{2N+4-2k}.
\end{equation}
Simultaneously, we can get $f_{k}^{0}\in L^{\infty}_{\vec w}H_{\vec x}^{2N+4-2k}$ by \eqref{fk}. Furthermore, the estimate \eqref{ineq-thetak0} can be verified.

We can write $\eqref{equations about f Ik,theta Ik}_{1}$ in the following form
\begin{equation}\nonumber
\begin{aligned}
\frac{\partial \theta_{I, k}}{\partial\tau}+\Big(1+\frac{4}{\rho^{0}}(\theta_{0}^{0}+\theta_{I, 0})^{3}\Big)\theta_{I, k}=&\frac{1}{\rho^{0}}\Big(-\sum_{\substack{
  i+j+l+m=k, \\
 0\leq i,j,l,m\leq k-1
}}(D_{i}+\theta_{I, i})(D_{j}+\theta_{I, j})(D_{l}+\theta_{I, l})(D_{m}+\theta_{I, m})\\&+\sum_{\substack{
  i+j+l+m=k, \\
 i,j,l,m\geq 0
}}D_{i}D_{j}D_{l}D_{m}+G_{k}^{3}\Big):=\tilde{l}_{k}(\tau, \vec x).
\end{aligned}
\end{equation}
Then, we have
\begin{equation}\nonumber
\theta_{I, k}=e^{-\Big(1+\frac{4}{\rho^{0}}(\theta_{0}^{0}+\theta_{I, 0})^{3}\Big)\tau}(-\theta_{k}^{0})+\int_{0}^{\tau}e^{-\Big(1+\frac{4}{\rho^{0}}(\theta_{0}^{0}+\theta_{I, 0})^{3}\Big)(\tau-s)}\tilde{l}_{k}\mathrm{d}s,
\end{equation}
which implies the global existence of $\theta_{I, k}$ and $\theta_{I, k}\in C^{1}([0, \infty); H_{\vec x}^{2N+4-2k})$. 

According to $\eqref{equations about f Ik,theta Ik}_{1}$ and $\eqref{equations about f Ik,theta Ik}_{3}$, we can write $f_{I, k}$ in the following form
\begin{equation}\nonumber
\begin{aligned}
f_{I, k}=&-e^{-\tau}f_{k}^{0}+\int_{0}^{\tau}e^{s-\tau}\Big(\sum_{\substack{
  i+j+l+m=k, \\
  i,j,l,m\geq 0
}}((D_{i}+\theta_{I, i})(D_{j}+\theta_{I, j})(D_{l}+\theta_{I, l})(D_{m}+\theta_{I, m})\\&-D_{i}D_{j}D_{l}D_{m})\Big)+G_{k}^{4})\mathrm{d}s,
\end{aligned}
\end{equation}
which further implies the existence solution $f_{I, k}\in C^{1}([0, \infty); L^{\infty}_{\vec w}H_{\vec x}^{2N+4-2k}))$. \hfill$\Box$

Using the similar method to Lemma \ref{ini-0}, we can get the estimates \eqref{ineq-thetak0} and \eqref{ineq-thetak1}.

Collecting \eqref{rhok1}-\eqref{Fk-1biaoda}, we write the equations about $\rho_{k}, \vec{u}_{k}, \theta_{k}$ and $f_{k}$, $k\geq 2$, as follows
\begin{equation}\label{equations about theta k overline f epsilon k}\left\{
\begin{split}
&f_{k}=B_{k}(\theta_{0};\theta_{k})-(\partial_{t}f_{k-2}+\vec{w}\cdot\nabla_{x}f_{k-1}
+f_{k-2}-\overline{f_{k-2}})\\
&\partial_{t}\rho_{k}+\vec{u}_{k}\cdot\nabla_{x}\rho_{0}+\vec{u}_{0}\cdot\nabla_{x}\rho_{k}+\rho_{0}\mathrm{div}\vec{u}_{k}
+\rho_{k}\mathrm{div}\vec{u}_{0}=F_{k-1}^{1},\ \ \mathrm{in}\ \ (0, T]\times\mathbb{T}^{3},\\
&\rho_{0}\partial_{t}\vec{u}_{k}+\rho_{0}\vec{u}_{0}\cdot\nabla_{x}\vec{u}_{k}
+\rho_{0}\vec{u}_{k}\cdot\nabla_{x}\vec{u}_{0}+\Big(\rho_{0}+\frac{1}{3}\theta_{0}^{3}\Big)\nabla_{x}\theta_{k}+\rho_{k}\nabla_{x}\theta_{0}\\&+\theta_{0}\nabla_{x}\rho_{k}
+\theta_{k}\nabla_{x}\Big(\rho_{0}+\frac{1}{3}\theta_{0}^{3}\Big)
+\rho_{k}\partial_{t}\vec{u}_{0}=F_{k-1}^{2},\ \ \mathrm{in}\ \ (0, T]\times\mathbb{T}^{3},\\
&(\rho_{0}+\theta_{0}^{3})\partial_{t}\theta_{k}+\theta_{k}\partial_{t}(\theta_{0}^{3})+\rho_{k}\partial_{t}\theta_{0}+\rho_{0}\vec{u}_{0}\cdot\nabla_{x}\theta_{k}
+\rho_{0}\vec{u}_{k}\cdot\nabla_{x}\theta_{0}+\rho_{k}\vec{u}_{0}\cdot\nabla_{x}\theta_{0}+\rho_{0}\theta_{0}\mathrm{div}_{x}\vec{u}_{k}\\&+\rho_{k}\theta_{0}\mathrm{div}_{x}\vec{u}_{0}+\rho_{0}\theta_{k}\mathrm{div}_{x}\vec{u}_{0}
-4\theta_{0}^{3}\Delta_{x}\theta_{k}-12\theta_{0}^{2}\nabla_{x}\theta_{0}\cdot\nabla_{x}\theta_{k}-4\theta_{k}\Delta_{x}\theta_{0}^{3}-\frac{(\theta_{0})^{3}}{3}\nabla_{x}\theta_{k}\cdot\vec{u}_{0}
\\&-\theta_{k}\theta_{0}^{2}\nabla_{x}\theta_{0}\cdot\vec{u}_{0}+\frac{4\theta_{0}^{3}}{3}\nabla_{x}\theta_{0}\cdot\vec{u}_{k}=F_{k-1}^{3},\ \ \mathrm{in}\ \ (0, T]\times\mathbb{T}^{3},\\
&\rho_{k}(0, \vec{x})=\rho_{k}^{0}(\vec x),\;\;\vec{u}_{k}(0, \vec{x})=\vec u_{k}^{0}(\vec x),\;\;\theta_{k}(0, \vec{x})=\theta_{2}^{0}(\vec x),\;\;f_{k}(0, \vec{x}, \vec w)=f_{k}^{0},\ \ \mathrm{in}  \ \ \mathbb{S}^{2}\times\mathbb{T}^{3}.
\end{split}\right.
\end{equation}
\begin{lem}\label{kth}
The problem \eqref{equations about theta k overline f epsilon k} has a unique solution on $[0, T]$, and $(\rho_{k}, \vec u_{k}, \theta_{k}, f_{k})\in C^{0}([0, T]; H_{\vec x}^{2N+4-2k})\times C([0, T]; H_{\vec x}^{2N+4-2k})\times C([0, T]; H_{\vec x}^{2N+4-2k})\cap L^{2}(0, T; H_{\vec x}^{2N+5-2k})\times C([0, T]; \\L_{\vec w}^{\infty}H_{\vec x}^{2N+4-2k})\cap L^{2}(0, T; L_{\vec w}^{\infty}H_{\vec x}^{2N+5-2k})$, where $k\geq 2$. Furthermore, we have
\begin{equation}\label{ineq-thetak}
\begin{aligned}
&\|(\rho_{k}, \vec u_{k}, \theta_{k})\|_{L^{\infty}(0, T; H_{\vec x}^{2N+4-2k})}+\|\theta_{k}\|_{L^{2}(0, T; H_{\vec x}^{2N+5-2k}))}+\|f_{k}\|_{L^{\infty}(0, T; L_{\vec w}^{\infty}H_{\vec x}^{2N+4-2k})}\\&+\|f_{k}\|_{L^{2}(0, T; L_{\vec w}^{\infty}H_{\vec x}^{2N+5-2k})}\leq C.
\end{aligned}
\end{equation}
\end{lem}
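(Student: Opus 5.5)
An induction on $k\geq 2$ will prove Lemma \ref{kth}. It follows the scheme already used for the first-order problem in Lemma \ref{firstinterior}. The induction hypothesis is that \eqref{ineq-thetak} holds for all orders $1\leq i\leq k-1$, with regularity index $2N+4-2i$ (for $i=1$ this is Lemma \ref{firstinterior}, with index $2N+1$), and that the zeroth order satisfies Lemma \ref{zerothinterior}.

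\textbf{Step 1: regularity of the source terms.} I first show that the sources $F^{1}_{k-1},F^{2}_{k-1},F^{3}_{k-1}$ belong to $L^{2}(0,T;H^{2N+3-2k}_{\vec x})$, and to $L^\infty$ in time in the hyperbolic components, with bounds depending only on lower orders. The worst terms are the following.
\begin{itemize}
\item In $F^{3}_{k-1}$: $\partial_t^{2}\overline{f_{k-2}}$ and $\langle(\vec w\cdot\nabla_x)^{2}\partial_tf_{k-2}\rangle$.
\item In $F^{2}_{k-1}$ and $F^{3}_{k-1}$: the triple-derivative term $\langle(\vec w\cdot\nabla_x)^{2}(\vec w\cdot\nabla_x)f_{k-1}\rangle$.
\end{itemize}
Using the formula $f_{k-2}=B_{k-2}-(\partial_tf_{k-4}+\dots)$, each time derivative is traded for two spatial derivatives through the equations of orders $\leq k-1$: the parabolic equation for $\theta_i$ and the symmetric hyperbolic system for $(\rho_i,\vec u_i)$. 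The bookkeeping $2N+4-2(k-2)-4=2N-2k+4$ and $2N+4-2(k-1)-3=2N+3-2k$ shows that the loss is exactly two derivatives per order. This is why the index drops by $2$ at each step, and why the data sit in $H^{2N+2}$. The initial data $\rho^{0}_k,\vec u^{0}_k,\theta^{0}_k,f^{0}_k\in H^{2N+4-2k}_{\vec x}$ are supplied by Lemmas \ref{kthinitiallayer} and \ref{kthinterior}.

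\textbf{Step 2: reformulation as a linear system.} As in \eqref{equations about theta 1 overline f epsilon 1 jia1}, I set $V_k=(\rho_k,\vec u_k)$ and multiply the $(\rho_k,\vec u_k)$ equations by the symmetrizer $A_0(V_0)$ of \eqref{xishujuzhen-1-xin}. This gives a linear symmetric hyperbolic system with coefficients depending only on $(V_0,\theta_0)\in C([0,T];H^{2N+2})$, together with a term $D(V_0)V_k$ and a coupling $\nabla_x\theta_k$. The $\theta_k$ equation is divided by $\rho_0+\theta_0^3\geq c(a)>0$. It is then a linear uniformly parabolic equation, with diffusion coefficient $4\theta_0^3/(\rho_0+\theta_0^3)\geq c>0$ (using $\theta_0\geq a/2$), coupled to $V_k$ only through first derivatives ($\mathrm{div}\,\vec u_k$, $\vec u_k\cdot\nabla\theta_0$). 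Because the system is linear, local existence and uniqueness follow from a standard iteration or Galerkin scheme as in Lemma \ref{zerothinterior}. Global existence on $[0,T]$ then follows from a priori bounds.

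\textbf{Step 3: energy estimates (the main obstacle).} For $0\leq\gamma\leq 2N+4-2k$ I define $E_1,E_2$ as in \eqref{def Et1}--\eqref{def Et2}, with $V_1,\theta_1$ replaced by $V_k,\theta_k$.
\begin{itemize}
\item Hyperbolic part: symmetry of $A_j$ and commutator estimates in $H^{s}$, $s\leq 2N+2$, control $E_1$. The coupling $\nabla_x D^\gamma\theta_k$ is absorbed into $\delta\|\nabla_x\theta_k\|^2_{L^2H^{\gamma}}$ by Young's inequality.
\item Parabolic part: this part gives the dissipation $\underline C\|\nabla_x\theta_k\|^2_{L^2H^{\gamma}}$. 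The term $\rho_0\theta_0\,\mathrm{div}\,D^\gamma\vec u_k$ is integrated by parts against $D^\gamma\theta_k$, so it costs $\|V_k\|_{H^\gamma}\|\nabla\theta_k\|_{H^\gamma}$ and is again absorbed.
\item Sources: these enter as $\|F^{j}_{k-1}\|^2_{L^2H^{\gamma}}$, bounded by Step 1. For the parabolic one, $F^{3}_{k-1}\in L^{2}H^{\gamma-1}$ suffices through duality against $\nabla\theta_k$.
\end{itemize}
Summing over $\gamma$, choosing $\delta$ small and applying Gronwall on $T\leq1$ yields the bound on $\|(\rho_k,\vec u_k,\theta_k)\|_{L^\infty H^{2N+4-2k}}+\|\theta_k\|_{L^2H^{2N+5-2k}}$.

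The delicate point is verifying the exact derivative count in Step 1. The term $\partial_t^2\overline{f_{k-2}}$ must land in $L^2H^{2N+3-2k}$ rather than one derivative lower. I will treat it by substituting the $\theta_{k-2}$ equation twice and using the extra $L^2H^{2N+5-2(k-2)}$ parabolic gain from the induction hypothesis.

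\textbf{Step 4: estimate for $f_k$.} Finally, $f_k$ is given explicitly by $\eqref{equations about theta k overline f epsilon k}_1$. Its $L^\infty_{\vec w}H^{2N+4-2k}$ and $L^2L^\infty_{\vec w}H^{2N+5-2k}$ bounds follow from the bounds on $\theta_k$, from $\theta_0,\dots,\theta_{k-1}$ by the algebra property of Sobolev spaces, and from the induction hypothesis on $f_{k-1},f_{k-2}$. The bound on $\partial_tf_{k-2}$ is again obtained by converting time derivatives into space derivatives.
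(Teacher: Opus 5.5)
Your proposal is correct and follows the same route as the paper, which for $k\geq 2$ just repeats the energy method of Lemma~\ref{firstinterior}: symmetrize the $(\rho_k,\vec u_k)$ part with $A_0(V_0)$, absorb the $\nabla_x\theta_k$ coupling into the parabolic dissipation of the $\theta_k$ equation, sum over $\gamma$, apply Gronwall on $T\leq 1$ and continue; your induction on the regularity of $F^{1}_{k-1},F^{2}_{k-1},F^{3}_{k-1}$ and your explicit bound for $f_k$ only spell out what the paper leaves implicit. When you write out Step~1, fix two points in the bookkeeping: the top-order hyperbolic estimate needs $F^{1}_{k-1},F^{2}_{k-1}\in L^{2}(0,T;H^{2N+4-2k}_{\vec x})$, not just $H^{2N+3-2k}_{\vec x}$, which does hold because $F^{2}_{k-1}$ contains only two spatial derivatives of $f_{k-1}$ (the triple derivative occurs only in $F^{3}_{k-1}$); and for $k=2$ the formula $2N+4-2i$ overstates the regularity of orders $0$ and $1$ (really $2N+2$ and $2N+1$), so terms such as $\partial_t^{2}\overline{f_{0}}$ and $\langle(\vec w\cdot\nabla_x)^{3}f_{1}\rangle$ land in $L^{2}H^{2N-1}_{\vec x}$ with no room to spare, and only after you use the $L^{2}$-in-time parabolic gains.
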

The proof process is similar to Lemma \ref{firstinterior}, so, we omit the details here.

\section{Diffusive Limit}\label{si}

In this section, we prove Theorem \ref{mainthm} by estimating the difference between the solution $(\rho^{\epsilon}, \vec{u}^{\epsilon}, \theta^{\epsilon}, f^{\epsilon})$ to system \eqref{research equations} and the constructed approximate solution $(\rho^{N}, \vec{u}^{N}, \theta^{N}, f^{N})$ where 
\begin{equation}\label{expansion of rho u epsilon}
\rho^{N}=\sum_{k=0}^{N}\epsilon^{k}(\rho_{k}+\rho_{I, k}), \ \ \vec{u}^{N}=\sum_{k=0}^{N}\epsilon^{k}(\vec{u}_{k}+\vec{u}_{I, k}), \ \ \theta^{N}=\sum_{k=0}^{N}\epsilon^{k}(\theta_{k}+\theta_{I, k})
\end{equation}
and
\begin{equation}\label{expansion of f theta epsilon}
f^{N}=\sum_{k=0}^{N}\epsilon^{k}(f_{k}+f_{I, k}).
\end{equation} 

The remainder can be defined as
\begin{equation}\nonumber
\begin{split}
\rho_{r}=\rho^{\epsilon}-\rho^{N},\ \ \vec{u}_{r}=\vec{u}^{\epsilon}-\vec{u}^{N},\ \ \theta_{r}=\theta^{\epsilon}-\theta^{N},\ \ f_{r}=f^{\epsilon}-f^{N},
\end{split}
\end{equation}
functions $(\rho_{r}, \vec{u}_{r}, f_{r}, \theta_{r})$ then satisfy
\begin{equation}\label{reeq1}
\partial_{t}\rho_{r}+\vec{u}_{r}\cdot\nabla_{x}\rho^{N}+\vec{u}^{\epsilon}\cdot\nabla_{x}\rho_{r}+\rho_{r}\mathrm{div}\vec{u}^{N}+\rho^{\epsilon}\mathrm{div}\vec{u}_{r}=-\mathcal{L}_{1}(\rho^{N}, \vec{u}^{N}),
\end{equation}
\begin{equation}\label{reeq2}
\begin{aligned}
&\epsilon^{2}\rho_{r}\partial_{t}\vec{u}^{N}+\epsilon^{2}\rho^{\epsilon}\partial_{t}\vec{u}_{r}+\epsilon^{2}(\rho^{\epsilon}\vec{u}^{\epsilon}\cdot\nabla_{x}\vec{u}_{r}
+\rho_{r}\vec{u}^{\epsilon}\nabla_{x}\vec{u}^{N}+\rho^{N}\vec{u}_{r}\cdot\nabla_{x}\vec{u}^{N})
+\epsilon^{2}(\rho_{r}\nabla_{x}\theta^{N}\\&+\rho^{\epsilon}\nabla_{x}\theta_{r})+\epsilon^{2}(\theta_{r}\nabla_{x}\rho^{N}+\theta^{\epsilon}\nabla_{x}\rho_{r})
-\langle(\epsilon+\epsilon^{3})\vec w(f_{r}-\overline{f_{r}})\rangle=-\mathcal{L}_{2}(\rho^{N}, \vec{u}^{N}, \theta^{N}, f^{N}),
\end{aligned}
\end{equation}
\begin{equation}\label{reeq3}
\begin{aligned}
&\epsilon^{2}(\rho^{\epsilon}\partial_{t}\theta_{r}+\rho_{r}\partial_{t}\theta^{N})+\epsilon^{2}(\rho^{\epsilon}\vec{u}^{\epsilon}\cdot\nabla_{x}\theta_{r}+\rho_{r}\vec{u}^{\epsilon}\cdot\nabla_{x}\theta^{N}+\rho^{N}\vec{u}_{r}\cdot\nabla_{x}\theta^{N})
+\epsilon^{2}(\rho^{\epsilon}\theta^{\epsilon}\mathrm{div}\vec {u}_{r}\\&+\rho_{r}\theta^{N}\mathrm{div}\vec {u}^{N}+\rho^{N}\theta_{r}\mathrm{div}\vec {u}^{N})+(\theta^{N}+\theta_{r})^{4}-(\theta^{N})^{4}-\overline{f_{r}}=-\mathcal{L}_{3}(\rho^{N}, \vec{u}^{N}, \theta^{N}, f^{N})
\end{aligned}
\end{equation}
and
\begin{equation}\label{reeq4}
\epsilon^{2}\partial_{t}f_{r}+\epsilon\vec{w}\cdot\nabla_{x}f_{r}+\epsilon^{2}(f_{r}-\overline{f_{r}})+f_{r}
-(\theta^{N}+\theta_{r})^{4}+(\theta^{N})^{4}=-\mathcal{L}_{4}(f^{N}, \theta^{N}),
\end{equation}
with initial conditions
\begin{equation}\label{rein}
\rho_{r}(0, \vec x)=0,\ \ \vec{u}_{r}(0, \vec x)=0,\ \ \theta_{r}(0, \vec x)=0,\ \ f_{r}(0, \vec x, \vec w)=0,\ \ \mathrm{for}\ \ (\vec x, \vec w)\in\mathbb{T}^{3}\times\mathbb{S}^{2}.
\end{equation}

{\thm\label{errorestimates}
Assume $N\geq 4$ and $\sigma=\min_{0\leq k\leq N, i=1, 2}\sigma_{ki}$ where $\sigma_{0i}=\sigma_{0}$. The composite approximate solution $(\rho^{N}, \vec{u}^{N}, \theta^{N}, f^{N})$ constructed in subsection \ref{construction}, satisfies \eqref{reeq1}-\eqref{reeq2} with initial conditions \eqref{rein}. Moreover, the error terms $\mathcal{L}_{1}(\rho^{N}, \vec{u}^{n}), \mathcal{L}_{2}(\rho^{N}, \vec{u}^{N}, \theta^{N}, f^{N}), \mathcal{L}_{3}(\rho^{N}, \vec{u}^{N}, \\\theta^{N}, f^{N})$ and $\mathcal{L}_{4}(\theta^{N}, f^{N})$ satisfy
\begin{equation}\label{r12}
\|\mathcal{L}_{1}(\rho^{N}, \vec{u}^{N})\|_{L_{T}^{\infty}L_{\vec w}^{\infty}H_{\vec x}^{3}}, \|\mathcal{L}_{2}(\rho^{N}, \vec{u}^{N}, \theta^{N}, f^{N})\|_{L_{T}^{\infty}L_{\vec w}^{\infty}H_{\vec x}^{3}}\leq C\epsilon^{N+1}
\end{equation}
and
\begin{equation}\label{r34}
\|\mathcal{L}_{3}(\rho^{N}, \vec{u}^{N}, \theta^{N}, f^{N})\|_{L_{T}^{\infty}L_{\vec w}^{\infty}H_{\vec x}^{3}}, \|\mathcal{L}_{4}(\theta^{N}, f^{N})\|_{L_{T}^{\infty}L_{\vec w}^{\infty}H_{\vec x}^{3}}\leq C\epsilon^{N+1},
\end{equation}
where $C>0$ is a positive constant independent of $\epsilon$.}

\noindent\textbf{Proof.} We first consider $\mathcal{L}_{1}(\rho^{N}, \vec{u}^{N})$. From \eqref{resi1-jia-ini}, we have
\begin{equation}\label{esl1}
\begin{aligned}
&\mathcal{L}_{1}(\rho^{N}, \vec{u}^{N})\\=&\mathcal{L}_{1}\Big(\sum_{k=0}^{N}\epsilon^{k}\rho_{k}, \sum_{k=0}^{N}\epsilon^{k}\vec{u}_{k}\Big)+E_{r1}+O(\epsilon^{N+1})
:=\tilde{\mathcal{L}}_{1}+E_{r1},
\end{aligned}
\end{equation}
where
\begin{equation}\nonumber
\begin{aligned}
\tilde{\mathcal{L}}_{1}=\mathcal{L}_{1}\Big(\sum_{k=0}^{N}\epsilon^{k}\rho_{k}, \sum_{k=0}^{N}\epsilon^{k}\vec{u}_{k}\Big)+O(\epsilon^{N+1}),
\end{aligned}
\end{equation}
and $E_{r1}$ defined in \eqref{error1} and we rewrite it for clarity as follows
\begin{equation}\nonumber
\begin{aligned}
E_{r1}=&\sum_{k=0}^{N}\epsilon^{k}\Big(\sum_{\substack{
  i+j=k-2, \\
  0\leq i,j\leq k-2
}}\{(\vec{u}_{i}+\vec{u}_{I, i})\cdot\nabla_{x}(\rho_{j}+\rho_{I, j})-\vec{u}_{i}\cdot\nabla_{x}\rho_{j}\}
+\sum_{\substack{
  i+j=k-2, \\
  0\leq i,j\leq k-2
}}\{(\rho_{j}+\rho_{I, j})\mathrm{div}_{x}(\vec{u}_{i}\\&+\vec{u}_{I, i})-\rho_{j}\mathrm{div}_{x}\vec{u}_{i}\}
-\sum_{\substack{
  i+j=k-2, \\
  0\leq i,j\leq k-2
}}\{(\vec{E}_{i}+\vec{u}_{I, i})\cdot\nabla_{x}(A_{j}+\rho_{I, j})-\vec{E}_{i}\cdot\nabla_{x}A_{j}\}-\sum_{\substack{
  i+j=k-2, \\
  0\leq i,j\leq k-2
}}\{(A_{j}\\&+\rho_{I, j})\mathrm{div}_{x}(\vec{E}_{i}+\vec{u}_{I, i})-A_{j}\mathrm{div}_{x}\vec{E}_{i}\}\Big).
\end{aligned}
\end{equation}
Collecting Lemma \ref{ini-0}-\ref{kth}, we have
\begin{equation}\label{eq1}
\|\tilde{\mathcal{L}}_{1}\|_{L_{T}^{\infty}L_{\vec w}^{\infty}H_{\vec x}^{3}}\leq C\epsilon^{N+1}.
\end{equation}

For $\mathcal{L}_{12}$, after careful calculation, we have
\begin{equation}\nonumber
\begin{aligned}
E_{r1}=&\sum_{k=0}^{N}\epsilon^{k}\sum_{\substack{
  i+j=k-2, \\
  0\leq i,j\leq k-2
}}\{(\vec{u}_{i}-\vec{E}_{i})\cdot\nabla_{x}\rho_{I, j}+\vec{u}_{I, i}\cdot\nabla_{x}(\rho_{j}-A_{j})+(\rho_{j}-A_{j})\mathrm{div}_{x}\vec{u}_{I, i}\\&+\rho_{I, j}\mathrm{div}_{x}(\vec{u}_{i}-\vec{E}_{i})\}:=E_{r11}+E_{r12}+E_{r13}+E_{r14}.
\end{aligned}
\end{equation}
We can write $E_{r11}$ as 
\begin{equation}\nonumber
E_{r11}=\sum_{k=0}^{N}\epsilon^{k-i}\sum_{i=0}^{k-2}\epsilon^{i}(\vec{u}_{i}-\vec{E}_{i})\cdot\nabla_{x}\rho_{I, k-2-i}.
\end{equation}
Since $\nabla_{x}\rho_{I, k-2-i}=0$, for $i=k-1, k$, we can further write $\mathcal{L}_{121}$ as 
\begin{equation}
\begin{aligned}
E_{r11}=&\sum_{k=0}^{N}\epsilon^{k-i}\sum_{i=0}^{k}\epsilon^{i}(\vec{u}_{i}-\vec{E}_{i})\cdot\nabla_{x}\rho_{I, k-2-i}
=\sum_{k=i}^{N}\epsilon^{k-i}\sum_{i=0}^{N}\epsilon^{i}(\vec{u}_{i}-\vec{E}_{i})\cdot\nabla_{x}\rho_{I, k-2-i}.
\end{aligned}
\end{equation}
Due to the exponential decay estimates \eqref{ineq-theta00-jia1}, \eqref{ineq-theta01} and \eqref{ineq-thetak-jia}, we have
\begin{equation}\label{ex}
\|\nabla_{x}\rho_{I, k-2-i}\|_{L_{T}^{\infty}H_{\vec x}^{3}}\leq Ce^{-\frac{\sigma t}{\epsilon^{2}}}.
\end{equation}
Taylor's formula yields
\begin{equation}\label{rela-1}
\vec{u}_{p}(t, \vec x)=\sum_{q=0}^{N-p}\frac{t^{q}}{q!}\partial_{t}^{q}\vec{u}_{q}(0, \vec x)+\frac{\partial_{t}^{N-p+1}\vec{u}_{q}(t', \vec x)}{(N-p+1)!}t^{N-p+1},
\end{equation}
with $t'\in [0, t]$.
Using the above formula and \eqref{formula1}, we get
\begin{equation}\label{relation}
\begin{aligned}
\sum_{p=0}^{N}\epsilon^{p}(\vec{u}_{p}-\vec{E}_{p})=&\sum_{p=0}^{N}\epsilon^{p}\Big(\sum_{q=0}^{N-p}\frac{t^{q}}{q!}\partial_{t}^{q}\vec{u}_{q}(0, \vec x)+\frac{\partial_{t}^{N-p+1}\vec{u}_{p}(t', \vec x)}{(N-p+1)!}t^{N-p+1}\Big)\\&-\sum_{p=0}^{N}\epsilon^{p}\sum_{q=0}^{p}\epsilon^{q}\frac{\tau^{q}}{q!}\partial_{t}^{q}\vec{u}_{p-q}(0, \vec x).
\end{aligned}
\end{equation}
Using the formula
\begin{equation}\nonumber
\begin{aligned}
\sum_{p=0}^{N}\sum_{q=0}^{p}g(q, p)=\sum_{q=0}^{N}\sum_{p=q}^{N}g(q, p)=\sum_{q=0}^{N}\sum_{s=0}^{N-q}g(q, q+s)=\sum_{p=0}^{N}\sum_{q=0}^{N-p}g(p, p+q),
\end{aligned}
\end{equation}
where we have taken substitutions $s=p-q$ and $q\rightarrow p, s\rightarrow q$ in the above second and third equality. Then, taking $g(p, p+q)=\epsilon^{p}\frac{t^{q}}{q!}\partial_{t}^{q}\vec{u}_{p}(0, \vec x)$, we get
\begin{equation}\nonumber
g(q, p)=\epsilon^{q}\frac{t^{p-q}}{(p-q)!}\partial_{t}^{p-q}\vec{u}_{q}(0, \vec x)=\epsilon^{2p-q}\frac{\tau^{p-q}}{(k-q)!}\partial_{t}^{p-q}\vec{u}_{q}(0, \vec x)
\end{equation}
and so
\begin{equation}
\begin{aligned}
\sum_{p=0}^{N}\sum_{q=0}^{N-p}\epsilon^{p}\frac{t^{q}}{q!}\partial_{t}^{q}\vec{u}_{p}(0, \vec x)=\sum_{p=0}^{N}\sum_{q=0}^{N-p}\epsilon^{2p-q}\frac{\tau^{p-q}}{(p-q)!}\partial_{t}^{p-q}\vec{u}_{q}(0, \vec x).
\end{aligned}
\end{equation}
By substitutions $p\rightarrow p, p-q\rightarrow q$, we have
\begin{equation}\nonumber
\begin{aligned}
\sum_{p=0}^{N}\sum_{q=0}^{N-p}\epsilon^{2p-q}\frac{\tau^{p-q}}{(p-q)!}\partial_{t}^{p-q}\vec{u}_{q}(0, \vec x)=\sum_{p=0}^{N}\sum_{q=0}^{N}\epsilon^{p+q}\frac{\tau^{q}}{q!}\partial_{t}^{q}\vec{u}_{p-q}(0, \vec x).
\end{aligned}
\end{equation}
Taking this relation and \eqref{rela-1} into \eqref{relation} leads to
\begin{equation}\label{relation1}
\begin{aligned}
\sum_{i=0}^{N}\epsilon^{i}(\vec{u}_{i}-\vec{E}_{i})=&\sum_{i=0}^{N}\epsilon^{i}\frac{\partial_{t}^{N-i+1}\vec{u}_{i}(t', \vec x)}{(N-i+1)!}t^{N-i+1}.
\end{aligned}
\end{equation}
Combining \eqref{relation1} with \eqref{relation}, \eqref{ex} satisfies
\begin{equation}\nonumber
\begin{aligned}
\|E_{r11}\|_{L_{T}^{\infty}H_{\vec x}^{3}}\leq C\sum_{i=0}^{N}\epsilon^{i}\frac{1}{(N-i+1)!}t^{N-i+1}e^{-\frac{\sigma t}{\epsilon^{2}}},
\end{aligned}
\end{equation}
where we have used the fact that $\|\vec{u}_{0}\|_{C^{0}([0, T]; H_{\vec x}^{2N+2})},\|\vec{u}_{1}\|_{C^{0}([0, T]; H_{\vec x}^{2N+1})},\|\vec{u}_{i}\|_{C^{0}([0, T]; H_{\vec x}^{2N+4-2i})}\\\leq C$, $2\leq i\leq N$. Note that the function $h(t)=t^{N-i+1}e^{-\frac{\sigma t}{\epsilon^{2}}}$ attain its maximum at $t^{\star}=\frac{(N-i+1)\epsilon^{2}}{\sigma}$ with the maximum value $h(t^{\star})=(N-i+1)^{N-i+1}/\sigma\cdot e^{-(N-i+1)}$. Therefore,
\begin{equation}\nonumber
\begin{aligned}
\|E_{r11}\|_{L_{T}^{\infty}H_{\vec x}^{3}}\leq C\sum_{i=0}^{N}\epsilon^{2N+2-i}\frac{(N-i+1)^{N-i+1}}{\sigma^{N-i+1}(N-i+1)!}e^{-(N-i+1)}\leq C\epsilon^{N+2}(\gamma_{N}-1),
\end{aligned}
\end{equation}
where $\gamma_{N}:=\sum_{n=0}^{N+1}n^{n}/(\sigma^{n}n!)>1$ is a constant depending on $N$. Therefore
\begin{equation}\label{eq2}
\|E_{r11}\|_{L_{T}^{\infty}H_{\vec x}^{3}}\leq C\epsilon^{N+2}.
\end{equation}
Similarly, we can get
\begin{equation}\label{eq2-jia}
\|E_{r12}, E_{r13}, E_{r14}\|_{L_{T}^{\infty}H_{\vec x}^{3}}\leq C\epsilon^{N+2}.
\end{equation}
Collecting \eqref{eq1}, \eqref{eq2} and \eqref{eq2-jia}, we can derive
\begin{equation}\nonumber
\|\mathcal{L}_{1}\|_{L_{T}^{\infty}L_{\vec w}^{\infty}H_{\vec x}^{3}}\leq C\epsilon^{N+1}.
\end{equation}

Similarly, we can derive
\begin{equation}\label{omit1}
\|\mathcal{L}_{2}, \mathcal{L}_{3}, \mathcal{L}_{4}\|_{L_{T}^{\infty}L_{\vec w}^{\infty}H_{\vec x}^{3}}\leq C\epsilon^{N+1},
\end{equation}
where we omit the details of the calculations. It should be noted that the key step in proving the estimates \eqref{omit1} is furnished by the exponential decay estimates for the initial layers, together with the interior solution estimates obtained in Lemmas \ref{ini-0}-\ref{kth}.

The proof of Theorem \ref{mainthm} proceeds as follows. We first derive appropriate estimates for the associated linearized system, and then invoke the Banach fixed-point theorem to establish the existence of solutions in a neighborhood of zero. This leads to the desired convergence of $(\rho^{\epsilon}, \vec{u}^{\epsilon}, \theta^{\epsilon}, f^{\epsilon})$ to $(\rho^{N}, \vec{u}^{N}, \theta^{N}, f^{N})$ as $\epsilon\rightarrow 0$.

\subsection{Uniform estimates for the linearized equations}\label{s4.1}
We shall use the method of linearization to obtain the existence and the uniform estimates about $\epsilon$ of equations \eqref{reeq1}-\eqref{reeq4}. Consider the following linearized equations:
\begin{equation}\label{reeq1-jiajia}
\partial_{t}\rho_{r}+\vec{u}_{r}\cdot\nabla_{x}\rho^{N}+\tilde{\vec{u}}\cdot\nabla_{x}\rho_{r}+\rho_{r}\mathrm{div}\vec{u}^{N}+\tilde\rho\mathrm{div}\vec{u}_{r}=R_{1},
\end{equation}
\begin{equation}\label{reeq2-jiajia}
\begin{aligned}
&\epsilon^{2}\rho_{r}\partial_{t}\vec{u}^{N}+\epsilon^{2}\tilde\rho\partial_{t}\vec{u}_{r}+\epsilon^{2}(\tilde\rho\tilde{\vec{u}}\cdot\nabla_{x}\vec{u}_{r}
+\rho_{r}\tilde{\vec{u}}\cdot\nabla_{x}\vec{u}^{N}+\rho^{N}\vec{u}_{r}\cdot\nabla_{x}\vec{u}^{N})
\\&+\epsilon^{2}(\rho_{r}\nabla_{x}\theta^{N}+\tilde\rho\nabla_{x}\theta_{r})+\epsilon^{2}(\theta_{r}\nabla_{x}\rho^{N}+\tilde\theta\nabla_{x}\rho_{r})-\langle(\epsilon+\epsilon^{3})\vec w(f_{r}-\overline{f_{r}})\rangle
=\vec{R}_{2},
\end{aligned}
\end{equation}
\begin{equation}\label{reeq3-jiajia}
\begin{aligned}
&\epsilon^{2}(\tilde\rho\partial_{t}\theta_{r}+\rho_{r}\partial_{t}\theta^{N})+\epsilon^{2}(\tilde\rho\tilde{\vec{u}}\cdot\nabla_{x}\theta_{r}+\rho_{r}\tilde{\vec{u}}\cdot\nabla_{x}\theta^{N}+\rho^{N}\vec{u}_{r}\cdot\nabla_{x}\theta^{N})
\\&+\epsilon^{2}(\tilde\rho\tilde\theta\mathrm{div}\vec {u}_{r}+\rho_{r}\theta^{N}\mathrm{div}\vec {u}^{N}+\rho^{N}\theta_{r}\mathrm{div}\vec {u}^{N})+4(\theta^{N})^{3}\theta_{r}-\overline{f_{r}}=R_{3}+\langle R\rangle
\end{aligned}
\end{equation}
and
\begin{equation}\label{reeq4-jiajia}
\epsilon^{2}\partial_{t}f_{r}+\epsilon\vec{w}\cdot\nabla_{x}f_{r}+\epsilon^{2}(f_{r}-\overline{f_{r}})+f_{r}
-4(\theta^{N})^{3}\theta_{r}=R_{4}-R,
\end{equation}
where $R_{i}=R_{i}(t, \vec x)$, $i=1, 3$, $\vec{R}_{2}=(R_{2}^{1}, R_{2}^{2}, R_{2}^{3})$, $R=R(t, \vec x, \vec w)$, $R_{4}=R_{4}(t, \vec x, \vec w)$ and $(\tilde{\rho_{r}}, \tilde{\vec{u}_{r}}, \tilde{\theta_{r}})$ are given functions, $(\tilde\rho, \tilde{\vec{u}}, \tilde\theta)=(\rho^{N}+\tilde{\rho_{r}}, \vec{u}^{N}+\tilde{\vec{u}_{r}}, \theta^{N}+\tilde{\theta_{r}})$ 
and the initial conditions are taken to be 
\begin{equation}\label{rein-jiajia}
f_{r}(0, \vec x, \vec w)=0,\ \ \rho_{r}(0, \vec x)=0,\ \ \vec{u}_{r}(0, \vec x)=0, \ \ \theta_{r}(0, \vec x)=0,\ \ \mathrm{for}\ \ (\vec x, \vec w)\in\mathbb{T}^{3}\times\mathbb{S}^{2}.
\end{equation}

\noindent\textbf{Step 1.} We rewrite \eqref{reeq1-jiajia}-\eqref{reeq4-jiajia} as follows
\begin{equation}\label{reeq1-jia-jia}
\partial_{t}V_{r}+\sum_{j=1}^{3}\tilde{E_{j}^{1}}\partial_{j}V_{r}+\tilde{E}^{2}V_{r}=\vec{\tilde{H}},
\end{equation}
\begin{equation}\label{reeq2-jia-jia}
\partial_{t}f_{r}+\frac{1}{\epsilon}\vec{w}\cdot\nabla_{x}f_{r}+\Big(1+\frac{1}{\epsilon^{2}}\Big)(f_{r}-\overline{f_{r}})
=\frac{1}{\epsilon^{2}}(R_{4}-R)+\frac{1}{\epsilon^{2}}(4(\theta^{N})^{3}\theta_{r}-\overline{f_{r}}).
\end{equation}
Here $V_{r}=(\rho_{r}, \vec{u}_{r}, \theta_{r})$, $\vec{\tilde{H}}=(\tilde{H_{1}}, \tilde{H_{2}}, \tilde{H_{3}}, \tilde{H_{4}}, \tilde{H_{5}})$, where
\begin{equation}\nonumber
\tilde{H_{1}}=R_{1},\ \ \tilde{H_{j}}=\frac{1}{\tilde{\rho}\epsilon^{2}}R_{2}^{j}+\frac{1}{\tilde\rho}\langle\Big(\epsilon+\frac{1}{\epsilon}\Big)w_{j-1}(f_{r}-\overline{f_{r}})\rangle,\ \ j=2,3,4,
\end{equation}

\begin{equation}\nonumber
\tilde{H_{5}}=\frac{1}{\tilde{\rho}\epsilon^{2}}(R_{3}+\langle R\rangle)+\frac{1}{\tilde{\rho}\epsilon^{2}}(4(\theta^{N})^{3}\theta_{r}-f_{r}).
\end{equation}
Furthermore, $\tilde{E_{j}^{1}}(\tilde V)=\{\tilde{a}_{mn}\}_{5\times5}$ where $V_{r}=(\rho_{r}, \vec{u}_{r}, \theta_{r}), \tilde V=(\tilde\rho, \tilde{\vec u}, \tilde\theta)$, $\tilde{a}_{ii}=\tilde{u}_{j}$, $\tilde{a}_{1(j+1)}=\tilde\rho$, $\tilde{a}_{(j+1)1}=\frac{\tilde\theta}{\tilde\rho}$, $\tilde{a}_{(j+1)5}=1$, $\tilde{a}_{5(j+1)}=\tilde\theta$ for $j=1, 2, 3,$ and the rest elements of $\tilde{a}_{ij}$ are set to be 0, and 
\begin{equation}\label{erjiejuzhen}
\tilde{E^{2}}
=
\left[
\begin{array}{ccccc}
\mathrm{div}_{x}\vec{u}^{N}&\partial_{x_{1}}\rho^{N}&\partial_{x_{2}}\rho^{N}&\partial_{x_{3}}\rho^{N}&0\\
\frac{\partial_{t}u_{1}^{N}}{\tilde\rho}+\frac{\tilde{\vec u}\cdot\nabla_{x}u_{1}^{N}}{\tilde\rho}+\frac{\partial_{x_{1}}\theta^{N}}{\tilde\rho}&\rho^{N}\partial_{x_{1}}u_{1}^{N}&\rho^{N}\partial_{x_{2}}u_{1}^{N}&\rho^{N}\partial_{x_{3}}u_{1}^{N}&1\\
\frac{\partial_{t}u_{2}^{N}}{\tilde\rho}+\frac{\tilde{\vec u}\cdot\nabla_{x}u_{2}^{N}}{\tilde\rho}+\frac{\partial_{x_{2}}\theta^{N}}{\tilde\rho}&\rho^{N}\partial_{x_{1}}u_{2}^{N}&\rho^{N}\partial_{x_{2}}u_{2}^{N}&\rho^{N}\partial_{x_{3}}u_{2}^{N}&1\\
\frac{\partial_{t}u_{3}^{N}}{\tilde\rho}+\frac{\tilde{\vec u}\cdot\nabla_{x}u_{3}^{N}}{\tilde\rho}+\frac{\partial_{x_{3}}\theta^{N}}{\tilde\rho}&\rho^{N}\partial_{x_{1}}u_{3}^{N}&\rho^{N}\partial_{x_{2}}u_{3}^{N}&\rho^{N}\partial_{x_{3}}u_{3}^{N}&1\\
\frac{\partial_{t}\theta^{N}}{\tilde\rho}+\frac{\theta^{N}\mathrm{div}\vec{u}^{N}}{\tilde\rho}
&\rho^{N}\partial_{x_{1}}\theta^{N}&\rho^{N}\partial_{x_{2}}\theta^{N}&\rho^{N}\partial_{x_{3}}\theta^{N}&\rho^{N}\mathrm{div}_{x}\vec{u}^{N}\\
\end{array}
\right],
\end{equation}
$\vec{u}^{N}=(u_{1}^{N}, u_{2}^{N}, u_{3}^{N})$.

First, we symmetrize \eqref{reeq1-jia-jia} by multiplying it with the symmetrizing matrix defined by
\begin{equation}\label{xishujuzhen-1}
E^{0}(\tilde{V})
=
\left[
\begin{array}{cccc}
(\tilde{\rho})^{-1}&0&0\\
0&\frac{\tilde{\rho}}{\tilde{\theta}}\mathbb{I}_{3\times3}&0\\
0&0&\frac{\tilde{\rho}}{(\tilde{\theta})^{2}}
\end{array}
\right].
\end{equation}
Therefore, we can further rewrite \eqref{reeq1-jiajia}-\eqref{reeq4-jiajia} as follows
\begin{equation}\label{research equations-3-2-jia}
E^{0}(\tilde{V})\frac{\partial {V}_{r}}{\partial t}+\sum_{j=1}^{3}E_{j}^{1}(\tilde{V})\frac{\partial {V}_{r}}{\partial x_{j}}+E^{2}(\tilde V)V_{r}=\vec H,
\end{equation}
where $E_{j}^{1}(\tilde V):=E^{0}(\tilde V)\tilde{E}_{j}^{1}(\tilde V)$ is symmetric, $E^{2}(\tilde V)=E^{0}(\tilde V)\tilde{E}^{2}(\tilde V)$ and
\begin{equation}\nonumber
\vec H=E^{0}(\tilde V)\vec{\tilde H}=(H_{1}, H_{2}, H_{3}, H_{4}, H_{5})^{t}
\end{equation}
with
\begin{equation}\nonumber
H_{1}=\frac{1}{\tilde\rho}R_{1},\ \ H_{j}=\frac{1}{\tilde{\theta}\epsilon^{2}}R_{2}^{j}+\frac{1}{\tilde\theta}\Big\langle\Big(\epsilon+\frac{1}{\epsilon}\Big)w_{j-1}(f_{r}-\overline{f_{r}})\Big\rangle,j=2, 3, 4
\end{equation}
\begin{equation}\nonumber
H_{5}=\frac{1}{(\tilde{\theta})^{2}\epsilon^{2}}(R_{3}+\langle R\rangle)+\frac{1}{(\tilde{\theta})^{2}\epsilon^{2}}(4(\theta^{N})^{3}\theta_{r}-\overline{f_{r}}).
\end{equation}
According to \eqref{rein-jiajia}, we impose 
\begin{equation}\label{rein-jiajia-1}
f_{r}(0, \vec x, \vec w)=0,\ \ \ V_{r}(0, \vec x)=0, \ \ \mathrm{for}\ \ (\vec x, \vec w)\in\mathbb{T}^{3}\times\mathbb{S}^{2}.
\end{equation}

\noindent\textbf{Step 2.} The well-posedness of the linearized equations \eqref{research equations-3-2-jia}, \eqref{reeq4-jiajia} and \eqref{rein-jiajia-1}.
{\lem\label{le4.2}
Let $\epsilon>0$ and $(\rho^{N}, \vec{u}^{N}, \theta^{N}, f^{N})$ be the composite approximate solution constructed in Section \ref{er}. Assume $\|\tilde{\rho}_{r}, \tilde{\vec{u}}_{r}, \tilde{\theta}_{r}\|_{L_{T}^{\infty}H_{\vec x}^{3}}\leq\epsilon^{s}$, $\|\partial_{t}\tilde{\rho}_{r}, \partial_{t}\tilde{\theta}_{r}\|_{L_{T}^{\infty}H_{\vec x}^{2}}\leq 1$, where $\epsilon$ is sufficiently small, and $s>0$ is a given constant. We also assume $R_{4}, R\in L_{T}^{2}L_{\vec w}^{2}H_{\vec x}^{3}$ and $R_{i}\in L_{T}^{2}H_{\vec x}^{3}$, $i=1, 2, 3$. Then, the linearized equations \eqref{research equations-3-2-jia}, \eqref{reeq4-jiajia} and \eqref{rein-jiajia-1} have a unique solution $(V_{r}, f_{r})\in C([0, T]; H_{\vec x}^{3})\times C([0, T]; L_{\vec w}^{2}H_{\vec x}^{3})$. Moreover, the solution $(\rho_{r}, \vec{u}_{r}, \theta_{r}, f_{r})$ satisfies the following estimates
\begin{equation}\label{bd00}
\begin{aligned}
&\|V_{r}\|_{L_{t}^{\infty}H_{\vec x}^{3}}^{2}+\|f_{r}\|_{L_{t}^{\infty}L_{\vec w}^{2}H_{\vec x}^{3}}^{2}+\frac{1}{\epsilon^{2}}\|f_{r}-\overline{f_{r}}\|_{L_{t}^{2}L_{\vec w}^{2}H_{\vec x}^{3}}^{2}+\frac{1}{\epsilon^{2}}\|4(\theta^{N})^{3}\theta_{r}-\overline{f_{r}}\|_{L_{t}^{2}H_{\vec x}^{3}}^{2}
\\\leq& C(t)\Big(\frac{1}{\epsilon^{16}}\|(R_{1}, \vec{R}_{2}, R_{3}, R_{4}, R)\|_{L_{t}^{2}L_{\vec w}^{2}H_{\vec x}^{3}}^{2}\Big),
\end{aligned}
\end{equation}
where $C$ is a constant depending on $t$ but not depending on $\epsilon$.}

\noindent\textbf{Proof.} In view of the bounds $\|\tilde{\rho}_{r}, \tilde{\theta}_{r}\|_{L_{T}^{\infty}H_{\vec x}^{3}}\leq\epsilon^{s}$ and the decompositions $\tilde\rho=\rho^{N}+\tilde{\rho}_{r}$, $\tilde\theta=\theta^{N}+\tilde{\theta}_{r}$, we have $\tilde\rho, \tilde\theta\geq a/4$ for all sufficiently small $\epsilon$. For each fixed $\epsilon$, the local existence of solutions to \eqref{research equations-3-2-jia}, \eqref{reeq4-jiajia}, and \eqref{rein-jiajia-1} on an interval $[0, \tilde T]$ follows from the standard linear hyperbolic theory and Lemma \ref{lianxujieguo}. We may assume without loss of generality that $0<\tilde T<T$. To extend the solution up to $T$, it remains to derive uniform estimates in time.

Applying the operator $E^{0}(\tilde V)D_{x}^{\gamma}(E^{0}(\tilde V))^{-1}$ to the equation \eqref{research equations-3-2-jia} where $0\leq\gamma\leq 3$, we obtain
\begin{equation}\label{ene-0}
\begin{split}
E^{0}(\tilde{V})\frac{\partial D_{x}^{\gamma}V_{r}}{\partial t}+\sum_{j=1}^{3}E_{j}^{1}(\tilde{V})\frac{\partial D_{x}^{\gamma}V_{r}}{\partial x_{j}}+D_{x}^{\gamma}(E^{2}(\tilde V)V_{r})=&E^{0}(\tilde V)D_{x}^{\gamma}(E^{0}(\tilde V))^{-1}\vec H+F_{\gamma},
\end{split}
\end{equation}
where 
\begin{equation}\nonumber
F_{\gamma}=-\sum_{j=1}^{3}\sum_{0\leq\beta\leq \gamma-1}E^{0}(\tilde V)D_{x}^{\gamma-\beta}((E^{0}(\tilde V))^{-1}E_{j}^{1}(\tilde V))\frac{\partial D_{x}^{\beta}V_{r}}{\partial x_{j}}.
\end{equation}
Define
\begin{equation}\label{def Et}
E_{r}(t):=\frac{1}{2}\int_{\mathbb{T}^{3}}(E^{0}(\tilde V)D_{x}^{\gamma}V_{r})\cdot D_{x}^{\gamma}V_{r}\mathrm{d}\vec x.
\end{equation}
Then,
\begin{equation}\label{energy-remainder}
\begin{aligned}
E_{r}(t)=&\int_{0}^{t}\frac{\mathrm{d}E_{r}(s)}{\mathrm{d}s}\mathrm{d}s\\=&\int_{0}^{t}\int_{\mathbb{T}^{3}}\Big(\frac{1}{2}\partial_{s}E^{0}(\tilde V)
D_{x}^{\gamma}V_{r}\cdot D_{x}^{\gamma}V_{r}+\frac{1}{2}\sum_{j=1}^{3}\frac{\partial E_{j}^{1}(\tilde V)}{\partial x_{j}}D_{x}^{\gamma}V_{r}\cdot D_{x}^{\gamma}V_{r}\\&+E^{0}(\tilde V)D_{x}^{\gamma}(E^{0}(\tilde V))^{-1}\vec H\cdot D_{x}^{\gamma}V_{r}-D_{x}^{\gamma}(E^{2}(\tilde V)V_{r})\cdot D_{x}^{\gamma}V_{r}+F_{\gamma}\cdot D_{x}^{\gamma}V_{r}\Big)\mathrm{d}\vec x\mathrm{d}s,
\end{aligned}
\end{equation}
where $t\in [0, \tilde T]$.

By \eqref{def Et}, we have 
\begin{equation}\nonumber
E_{r}(t)\geq\lambda\int_{\mathbb{T}^{3}}|D_{x}^{\gamma}V_{r}|^{2}\mathrm{d}\vec x,
\end{equation}
for some positive constant $\lambda=\lambda(a)$.

After a straightforward computation, we obtain
\begin{equation}\label{energy-remainder-1}
\begin{aligned}
\lambda\int_{\mathbb{T}^{3}}|D_{x}^{\gamma}V_{r}|^{2}\mathrm{d}\vec x\leq& C(\epsilon)(\|V_{r}\|_{L_{t}^{2}H_{\vec x}^{3}}^{2}+\|f_{r}\|_{L_{t}^{2}L_{\vec w}^{2}H_{\vec x}^{3}}^{2}),
\end{aligned}
\end{equation}
where $C(\epsilon)$ is a positive constant depending on $\epsilon$.

Summing up from $\gamma=0$ to $\gamma=3$, we have
\begin{equation}\label{energy-remainder-1}
\begin{aligned}
\|V_{r}\|_{H_{\vec x}^{3}}^{2}(t)\leq& C(\epsilon)\lambda^{-1}(\|V_{r}\|_{L_{t}^{2}H_{\vec x}^{3}}^{2}+\|f_{r}\|_{L_{t}^{2}L_{\vec w}^{2}H_{\vec x}^{3}}^{2}+\|R_{1}, \vec{R}_{2}, R_{3}, R\|_{L_{t}^{2}L_{\vec w}^{2}H_{\vec x}^{3}}^{2}).
\end{aligned}
\end{equation}
For \eqref{reeq2-jia-jia}, we can derive
\begin{equation}\label{energy-remainder-2}
\begin{aligned}
\|f_{r}\|_{L_{\vec w}^{2}H_{\vec x}^{3}}^{2}(t)\leq& C(\epsilon)(\|V_{r}\|_{L_{t}^{2}H_{\vec x}^{3}}^{2}+\|f_{r}\|_{L_{t}^{2}L_{\vec w}^{2}H_{\vec x}^{3}}^{2}+\|R_{4}, R\|_{L_{t}^{2}L_{\vec w}^{2}H_{\vec x}^{3}}^{2}).
\end{aligned}
\end{equation}
Collecting \eqref{energy-remainder-1} and \eqref{energy-remainder-2}, we have
\begin{equation}\label{energy-remainder-3}
\begin{aligned}
\|f_{r}\|_{L_{\vec w}^{2}H_{\vec x}^{3}}^{2}(t)+\|V_{r}\|_{H_{\vec x}^{3}}^{2}(t)\leq& C(\epsilon)(\|V_{r}\|_{L_{t}^{2}H_{\vec x}^{3}}^{2}+\|f_{r}\|_{L_{t}^{2}L_{\vec w}^{2}H_{\vec x}^{3}}^{2}+\|R_{1}, \vec{R}_{2}, R_{3}, R_{4}, R\|_{L_{t}^{2}L_{\vec w}^{2}H_{\vec x}^{3}}^{2}).
\end{aligned}
\end{equation}
By the Gronwall's inequality, we have
\begin{equation}\label{energy-remainder-4}
\begin{aligned}
\|V_{r}\|_{L_{t}^{2}H_{\vec x}^{3}}^{2}+\|f_{r}\|_{L_{t}^{2}L_{\vec w}^{2}H_{\vec x}^{3}}^{2}\leq& e^{C(\epsilon)t}t(\|R_{1}, \vec{R}_{2}, R_{3}, R_{4}, R\|_{L_{t}^{2}L_{\vec w}^{2}H_{\vec x}^{3}}^{2})\\\leq& e^{C(\epsilon)T}T(\|R_{1}, \vec{R}_{2}, R_{3}, R_{4}, R\|_{L_{T}^{2}L_{\vec w}^{2}H_{\vec x}^{3}}^{2})
\end{aligned}
\end{equation}
and
\begin{equation}\label{energy-remainder-5}
\begin{aligned}
\|V_{r}\|_{L_{t}^{\infty}H_{\vec x}^{3}}^{2}+\|f_{r}\|_{L_{t}^{\infty}L_{\vec w}^{2}H_{\vec x}^{3}}^{2}\leq& e^{C(\epsilon)T}T(\|R_{1}, \vec{R}_{2}, R_{3}, R_{4}, R\|_{L_{T}^{2}L_{\vec w}^{2}H_{\vec x}^{3}}^{2}).
\end{aligned}
\end{equation}
By the fact that $t\leq\tilde T\leq T$, we have
\begin{equation}\label{energy-remainder-6}
\begin{aligned}
\|V_{r}\|_{L_{\tilde T}^{\infty}H_{\vec x}^{3}}^{2}+\|f_{r}\|_{L_{\tilde T}^{\infty}L_{\vec w}^{2}H_{\vec x}^{3}}^{2}\leq& e^{C(\epsilon)T}T(\|R_{1}, \vec{R}_{2}, R_{3}, R_{4}, R\|_{L_{T}^{2}L_{\vec w}^{2}H_{\vec x}^{3}}^{2}).
\end{aligned}
\end{equation}
Then, we can extend the existence time to $[0, T]$ by the estimate \eqref{energy-remainder-6} and the continuation method.

In the following, we give the uniform energy estimate \eqref{bd00} about $\epsilon$. 

According to \eqref{expansion of rho u epsilon} and \eqref{biaoda6}, for suitably small $\eta$ and $\epsilon$, we have 
\begin{equation}\nonumber
\begin{aligned}
(\theta^{N})^{3}=&\theta_{0}^{3}+\theta_{I, 0}^{3}+O(\epsilon)=\theta_{0}^{3}+O(\eta)+O(\epsilon)\geq C>0
\end{aligned}
\end{equation}
where $C=C(a)$ is a given constant.

Applying the operator $E^{0}(\tilde V)D_{x}^{\gamma}(E^{0}(\tilde V))^{-1}$ to the equation \eqref{research equations-3-2-jia} where $0\leq\gamma\leq 3$, and multiplying \eqref{research equations-3-2-jia} by $D_{x}^{\gamma}V_{r}$, we have
\begin{equation}\label{guji1}
\begin{aligned}
&\frac{1}{2}\int_{\mathbb{T}^{3}}(E^{0}(\tilde V)D_{x}^{\gamma}V_{r})\cdot D_{x}^{\gamma}V_{r}\mathrm{d}\vec x\\=&\int_{0}^{t}\int_{\mathbb{T}^{3}}\Big(\frac{1}{2}\partial_{s}E^{0}(\tilde V)
D_{x}^{\gamma}V_{r}\cdot D_{x}^{\gamma}V_{r}+\frac{1}{2}\sum_{j=1}^{3}\frac{\partial E_{j}^{1}(\tilde V)}{\partial x_{j}}D_{x}^{\gamma}V_{r}\cdot D_{x}^{\gamma}V_{r}\\&+E^{0}(\tilde V)D_{x}^{\gamma}(E^{0}(\tilde V))^{-1}\vec H\cdot D_{x}^{\gamma}V_{r}-D_{x}^{\gamma}(E^{2}(\tilde V)V_{r})\cdot D_{x}^{\gamma}V_{r}+F_{\gamma}\cdot D_{x}^{\gamma}V_{r}\Big)\mathrm{d}\vec x\mathrm{d}s
\\:=&J_{1}+J_{2}+J_{3}+J_{4}+J_{5},
\end{aligned}
\end{equation}
where
\begin{equation}\label{guji1-1}
J_{1}=\int_{0}^{t}\int_{\mathbb{T}^{3}}\frac{1}{2}\partial_{s}E^{0}(\tilde V)
D_{x}^{\gamma}V_{r}\cdot D_{x}^{\gamma}V_{r}\mathrm{d}\vec x\mathrm{d}s,\ \
J_{2}=\int_{0}^{t}\int_{\mathbb{T}^{3}}\frac{1}{2}\sum_{j=1}^{3}\frac{\partial E_{j}^{1}(\tilde V)}{\partial x_{j}}D_{x}^{\gamma}V_{r}\cdot D_{x}^{\gamma}V_{r}\mathrm{d}\vec x\mathrm{d}s,
\end{equation}

\begin{equation}\label{guji3}
J_{3}=\int_{0}^{t}\int_{\mathbb{T}^{3}}E^{0}(\tilde V)D_{x}^{\gamma}(E^{0}(\tilde V))^{-1}\vec H\cdot D_{x}^{\gamma}V_{r}\mathrm{d}\vec x\mathrm{d}s,\ \
J_{5}=-\int_{0}^{t}\int_{\mathbb{T}^{3}}F_{\gamma}\cdot D_{x}^{\gamma}V_{r}\mathrm{d}\vec x\mathrm{d}s.
\end{equation}
For $J_{1}, J_{2}$ and $J_{5}$, we have
\begin{equation}\label{guji6}
J_{1}+J_{2}+J_{5}\leq C(\|V_{r}\|_{L_{t}^{2}L_{\vec x}^{2}}^{2}+\|D_{x}^{\gamma}V_{r}\|_{L_{t}^{2}L_{\vec x}^{2}}^{2}),
\end{equation}
where we have used the fact that $\|\tilde{\rho}_{r}, \tilde{\vec{u}}_{r}, \tilde{\theta}_{r}\|_{L_{T}^{\infty}H_{\vec x}^{3}}\leq\epsilon^{s}$, $\|\partial_{t}\tilde{\rho}_{r}, \partial_{t}\tilde{\theta}_{r}\|_{L_{T}^{\infty}H_{\vec x}^{2}}\leq 1$.

We decompose $\tilde{E^{2}}$ into the singular part $\tilde{E^{2}_{1}}$ and nonsingular part $\tilde{E^{2}_{2}}$ as follows
\begin{equation}\label{erjiejuzhen-1}
\tilde{E^{2}_{1}}
=
\left[
\begin{array}{ccccc}
0&0&0&0&0\\
\frac{\partial_{t}u_{1}^{N}}{\tilde\rho}&0&0&0&0\\
\frac{\partial_{t}u_{2}^{N}}{\tilde\rho}&0&0&0&0\\
\frac{\partial_{t}u_{3}^{N}}{\tilde\rho}&0&0&0&0\\
\frac{\partial_{t}\theta^{N}}{\tilde\rho}&0&0&0&0\\
\end{array}
\right],
\end{equation}
and 
\begin{equation}\label{erjiejuzhen-2}
\tilde{E^{2}_{2}}
=
\left[
\begin{array}{ccccc}
\mathrm{div}_{x}\vec{u}^{N}&\partial_{x_{1}}\rho^{N}&\partial_{x_{2}}\rho^{N}&\partial_{x_{3}}\rho^{N}&0\\
\frac{\partial_{t}u_{1}^{N}}{\tilde\rho}&\rho^{N}\partial_{x_{1}}u_{1}^{N}&\rho^{N}\partial_{x_{2}}u_{1}^{N}&\rho^{N}\partial_{x_{3}}u_{1}^{N}&1\\
\frac{\tilde{\vec u}\cdot\nabla_{x}u_{2}^{N}}{\tilde\rho}+\frac{\partial_{x_{2}}\theta^{N}}{\tilde\rho}&\rho^{N}\partial_{x_{1}}u_{2}^{N}&\rho^{N}\partial_{x_{2}}u_{2}^{N}&\rho^{N}\partial_{x_{3}}u_{2}^{N}&1\\
\frac{\tilde{\vec u}\cdot\nabla_{x}u_{3}^{N}}{\tilde\rho}+\frac{\partial_{x_{3}}\theta^{N}}{\tilde\rho}&\rho^{N}\partial_{x_{1}}u_{3}^{N}&\rho^{N}\partial_{x_{2}}u_{3}^{N}&\rho^{N}\partial_{x_{3}}u_{3}^{N}&1\\
\frac{\theta^{N}\mathrm{div}\vec{u}^{N}}{\tilde\rho}
&\rho^{N}\partial_{x_{1}}\theta^{N}&\rho^{N}\partial_{x_{2}}\theta^{N}&\rho^{N}\partial_{x_{3}}\theta^{N}&\rho^{N}\mathrm{div}_{x}\vec{u}^{N}\\
\end{array}
\right].
\end{equation}
So, $E^{2}(\tilde V)=E^{2}_{1}(\tilde V)+E^{2}_{2}(\tilde V)=E^{0}(\tilde V)\tilde{E^{2}_{1}}(\tilde V)+E^{0}(\tilde V)\tilde{E^{2}_{2}}(\tilde V)$.

We write
\begin{equation}\label{guji4}
\begin{aligned}
J_{4}=&-\int_{0}^{t}\int_{\mathbb{T}^{3}}D_{x}^{\gamma}(E^{2}(\tilde V)V_{r})\cdot D_{x}^{\gamma}V_{r}\mathrm{d}\vec x\mathrm{d}s
\\=&-\int_{0}^{t}\int_{\mathbb{T}^{3}}D_{x}^{\gamma}(E^{2}_{1}(\tilde V)V_{r})\cdot D_{x}^{\gamma}V_{r}\mathrm{d}\vec x\mathrm{d}s-\int_{0}^{t}\int_{\mathbb{T}^{3}}D_{x}^{\gamma}(E^{2}_{2}(\tilde V)V_{r})\cdot D_{x}^{\gamma}V_{r}\mathrm{d}\vec x\mathrm{d}s:=J_{4}^{1}+J_{4}^{2}.
\end{aligned}
\end{equation}
In view of the expansion $E^{2}_{1}(\tilde V)\sim \epsilon\partial_{t}\vec{u}_{I, 1}+\partial_{t}\theta_{I, 0}+\epsilon\theta_{I, 1}+O(\epsilon^{2})$, combined with \eqref{ineq-theta00-jia1}, \eqref{uI1-jia}, and \eqref{ineq-theta11}, we obtain
\begin{equation}\label{guji4-1}
\begin{aligned}
J_{4}^{1}=&-\int_{0}^{t}\int_{\mathbb{T}^{3}}D_{x}^{\gamma}(E^{2}_{1}(\tilde V)V_{r})\cdot D_{x}^{\gamma}V_{r}\mathrm{d}\vec x\mathrm{d}s
\\\leq&C(\|V_{r}\|_{L_{t}^{\infty}L_{\vec x}^{2}}^{2}+\|D_{x}^{\gamma}V_{r}\|_{L_{t}^{\infty}L_{\vec x}^{2}}^{2})\int_{0}^{\frac{t}{\epsilon^{2}}}(\|(\frac{1}{\epsilon}\partial_{\tau}\vec{u}_{I, 1}, \frac{1}{\epsilon^{2}}\partial_{\tau}\theta_{I, 0}, \frac{1}{\epsilon}\partial_{\tau}\theta_{I, 1})\|_{L_{t}^{\infty}H_{\vec x}^{3}}\epsilon^{2}\mathrm{d}\tau\\&+C(\|V_{r}\|_{L_{t}^{2}L_{\vec x}^{2}}^{2}+\|D_{x}^{\gamma}V_{r}\|_{L_{t}^{2}L_{\vec x}^{2}}^{2})
\\\leq& C(\epsilon+\eta)(\|V_{r}\|_{L_{t}^{\infty}L_{\vec x}^{2}}^{2}+\|D_{x}^{\gamma}V_{r}\|_{L_{t}^{\infty}L_{\vec x}^{2}}^{2})+C(\|V_{r}\|_{L_{t}^{2}L_{\vec x}^{2}}^{2}+\|D_{x}^{\gamma}V_{r}\|_{L_{t}^{2}L_{\vec x}^{2}}^{2}).
\end{aligned}
\end{equation}
For the nonsingular term $J_{4}^{2}$, we have
\begin{equation}\label{guji4-2}
\begin{aligned}
J_{4}^{2}=&-\int_{0}^{t}\int_{\mathbb{T}^{3}}D_{x}^{\gamma}(E^{2}_{2}(\tilde V)V_{r})\cdot D_{x}^{\gamma}V_{r}\mathrm{d}\vec x\mathrm{d}s
\leq C(\|V_{r}\|_{L_{t}^{2}L_{\vec x}^{2}}^{2}+\|D_{x}^{\gamma}V_{r}\|_{L_{t}^{2}L_{\vec x}^{2}}^{2}).
\end{aligned}
\end{equation}

In order to get the uniform estimates about $\epsilon$, we need to give a detailed calculation about $J_{3}$ as follows
\begin{equation}\nonumber
\begin{aligned}
J_{3}=&\int_{0}^{t}\int_{\mathbb{T}^{3}}E^{0}(\tilde V)D_{x}^{\gamma}(E^{0}(\tilde V))^{-1}\vec H\cdot D_{x}^{\gamma}V_{r}\mathrm{d}\vec x\mathrm{d}s
\\=&\int_{0}^{t}\int_{\mathbb{T}^{3}}D_{x}^{\gamma}\vec H\cdot D_{x}^{\gamma}V_{r}\mathrm{d}\vec x\mathrm{d}s+\int_{0}^{t}\int_{\mathbb{T}^{3}}E^{0}(\tilde V)[D_{x}^{\gamma}, (E^{0}(\tilde V))^{-1}]\vec H\cdot D_{x}^{\gamma}V_{r}\mathrm{d}\vec x\mathrm{d}s:=J_{3}^{1}+J_{3}^{2},
\end{aligned}
\end{equation}
where
\begin{equation}\nonumber
\begin{aligned}
J_{3}^{1}=&\int_{0}^{t}\int_{\mathbb{T}^{3}}D_{x}^{\gamma}\vec H\cdot D_{x}^{\gamma}V_{r}\mathrm{d}\vec x\mathrm{d}s
\end{aligned}
\end{equation}
and
\begin{equation}\nonumber
\begin{aligned}
J_{3}^{2}=&\int_{0}^{t}\int_{\mathbb{T}^{3}}E^{0}(\tilde V)[D_{x}^{\gamma}, (E^{0}(\tilde V))^{-1}]\vec H\cdot D_{x}^{\gamma}V_{r}\mathrm{d}\vec x\mathrm{d}s.
\end{aligned}
\end{equation}
By the definition of $\vec H$ and $V_{r}$, we rewrite $J_{31}$ as follows
\begin{equation}\nonumber
\begin{aligned}
J_{3}^{1}=\int_{0}^{t}\int_{\mathbb{T}^{3}}D_{x}^{\gamma}\vec H\cdot D_{x}^{\gamma}V_{r}\mathrm{d}\vec x\mathrm{d}s
=&\int_{0}^{t}\int_{\mathbb{T}^{3}}(D_{x}^{\gamma}H_{1}D_{x}^{\gamma}\rho_{r}+D_{x}^{\gamma}(H_{2}, H_{3}, H_{4})\cdot D_{x}^{\gamma}\vec{u}_{r}+D_{x}^{\gamma}H_{5}D_{x}^{\gamma}\theta_{r})\mathrm{d}\vec x\mathrm{d}s
\\:=&K_{1}+K_{2}+K_{3},
\end{aligned}
\end{equation}
where
\begin{equation}\nonumber
\begin{aligned}
K_{1}=&\int_{0}^{t}\int_{\mathbb{T}^{3}}D_{x}^{\gamma}H_{1}D_{x}^{\gamma}\rho_{r}\mathrm{d}\vec x\mathrm{d}s
=&\int_{0}^{t}\int_{\mathbb{T}^{3}}D_{x}^{\gamma}\Big(\frac{1}{\tilde\rho}R_{1}\Big)D_{x}^{\gamma}\rho_{r}\mathrm{d}\vec x\mathrm{d}s
\leq& C(\|R_{1}\|_{L_{t}^{2}H_{\vec x}^{3}}^{2}+\|D_{x}^{\gamma}\rho_{r}\|_{L_{t}^{2}L_{\vec x}^{2}}^{2})
\end{aligned}
\end{equation}
and
\begin{equation}\nonumber
\begin{aligned}
K_{2}=&\int_{0}^{t}\int_{\mathbb{T}^{3}}D_{x}^{\gamma}(H_{2}, H_{3}, H_{4})\cdot D_{x}^{\gamma}\vec{u}_{r}\mathrm{d}\vec x\mathrm{d}s
\\\leq& C\Big(\frac{1}{\epsilon^{4}}\|\vec{R}_{2}\|_{L_{t}^{2}H_{\vec x}^{3}}^{2}+\frac{\delta}{\epsilon^{2}}(\|D_{x}^{\gamma}f_{r}-D_{x}^{\gamma}\overline{f_{r}}\|_{L_{t}^{2}L_{\vec x}^{2}}^{2}+\|f_{r}-\overline{f_{r}}\|_{L_{t}^{2}L_{\vec x}^{2}}^{2})+\frac{1}{\delta}\|D_{x}^{\gamma}\vec{u}_{r}\|_{L_{t}^{2}L_{\vec x}^{2}}^{2}\Big).
\end{aligned}
\end{equation}
We rewrite $J_{313}$ as follows
\begin{equation}\nonumber
\begin{aligned}
K_{3}=&\int_{0}^{t}\int_{\mathbb{T}^{3}}D_{x}^{\gamma}\theta_{r}D_{x}^{\gamma}H_{5}\mathrm{d}\vec x\mathrm{d}s
=\int_{0}^{t}\int_{\mathbb{T}^{3}}D_{x}^{\gamma}\theta_{r}D_{x}^{\gamma}\Big(\frac{1}{(\tilde{\theta})^{2}\epsilon^{2}}(R_{3}+\langle R\rangle)\Big)\mathrm{d}\vec x\mathrm{d}s\\&+\int_{0}^{t}\int_{\mathbb{T}^{3}}D_{x}^{\gamma}\theta_{r}D_{x}^{\gamma}\Big(\frac{1}{(\tilde{\theta})^{2}\epsilon^{2}}(4(\theta^{N})^{3}\theta_{r}-f_{r})\Big)\mathrm{d}\vec x\mathrm{d}s
:=L_{1}+L_{2},
\end{aligned}
\end{equation}
where
\begin{equation}\nonumber
\begin{aligned}
L_{1}=&\int_{0}^{t}\int_{\mathbb{T}^{3}}D_{x}^{\gamma}\theta_{r}D_{x}^{\gamma}\Big(\frac{1}{(\tilde{\theta})^{2}\epsilon^{2}}(R_{3}+\langle R\rangle)\Big)\mathrm{d}\vec x\mathrm{d}s
\leq& C\Big(\|D_{x}^{\gamma}\theta_{r}\|_{L_{t}^{2}L_{\vec x}^{2}}^{2}+\frac{1}{\epsilon^{4}}(\|R_{3}\|_{L_{t}^{2}H_{\vec x}^{3}}^{2}+\|R\|_{L_{t}^{2}L_{\vec w}^{2}H_{\vec x}^{3}}^{2})\Big)
\end{aligned}
\end{equation}
and
\begin{equation}\label{eqjia0}
\begin{aligned}
L_{2}=&\int_{0}^{t}\int_{\mathbb{T}^{3}}D_{x}^{\gamma}\theta_{r}D_{x}^{\gamma}\Big(\frac{1}{(\tilde{\theta})^{2}\epsilon^{2}}(4(\theta^{N})^{3}\theta_{r}-\overline{f_{r}})\Big)\mathrm{d}\vec x\mathrm{d}s.
\end{aligned}
\end{equation}
We get from \eqref{eqjia0} that
\begin{equation}\label{eqjia1}
\begin{aligned}
L_{2}=&\int_{0}^{t}\int_{\mathbb{T}^{3}}D_{x}^{\gamma}\theta_{r}D_{x}^{\gamma}\Big(\frac{1}{(\tilde{\theta})^{2}\epsilon^{2}}(4(\theta^{N})^{3}\theta_{r}-\overline{f_{r}})\Big)\mathrm{d}\vec x\mathrm{d}s
\\=&\int_{0}^{t}\int_{\mathbb{T}^{3}}\Big(\frac{1}{4(\theta^{N})^{3}}D_{x}^{\gamma}(4L_{0}\theta_{r})+[D_{x}^{\gamma}, \frac{1}{4(\theta^{N})^{3}}]4(\theta^{N})^{3}\theta_{r}\Big)\Big(\frac{1}{(\tilde{\theta})^{2}\epsilon^{2}}D_{x}^{\gamma}(4(\theta^{N})^{3}\theta_{r}-\overline{f_{r}})\\&+[D_{x}^{\gamma}, \frac{1}{(\tilde{\theta})^{2}\epsilon^{2}}](4(\theta^{N})^{3}\theta_{r}-\overline{f_{r}})\Big)\mathrm{d}\vec x\mathrm{d}s
\\=&\int_{0}^{t}\int_{\mathbb{T}^{3}}\frac{1}{4(\theta^{N})^{3}(\tilde{\theta})^{2}\epsilon^{2}}D_{x}^{\gamma}(4(\theta^{N})^{3}\theta_{r})D_{x}^{\gamma}(4(\theta^{N})^{3}\theta_{r}-\overline{f_{r}})\mathrm{d}\vec x\mathrm{d}s\\&+\int_{0}^{t}\int_{\mathbb{T}^{3}}\frac{1}{4(\theta^{N})^{3}}D_{x}^{\gamma}(4(\theta^{N})^{3}\theta_{r})[D_{x}^{\gamma}, \frac{1}{(\tilde{\theta})^{2}\epsilon^{2}}](4(\theta^{N})^{3}\theta_{r}-\overline{f_{r}})\mathrm{d}\vec x\mathrm{d}s\\&+\int_{0}^{t}\int_{\mathbb{T}^{3}}\Big([D_{x}^{\gamma}, \frac{1}{4(\theta^{N})^{3}}]4(\theta^{N})^{3}\theta_{r}\Big) \frac{1}{(\tilde{\theta})^{2}\epsilon^{2}}D_{x}^{\gamma}(4(\theta^{N})^{3}\theta_{r}-\overline{f_{r}})\mathrm{d}\vec x\mathrm{d}s
\\&+\int_{0}^{t}\int_{\mathbb{T}^{3}}\Big([D_{x}^{\gamma}, \frac{1}{4(\theta^{N})^{3}}]4(\theta^{N})^{3}\theta_{r}\Big)[D_{x}^{\gamma}, \frac{1}{(\tilde{\theta})^{2}\epsilon^{2}}](4(\theta^{N})^{3}\theta_{r}-\overline{f_{r}})\mathrm{d}\vec x\mathrm{d}s
\\:=&\int_{0}^{t}\int_{\mathbb{T}^{3}}\frac{1}{4(\theta^{N})^{3}(\tilde{\theta})^{2}\epsilon^{2}}D_{x}^{\gamma}(4(\theta^{N})^{3}\theta_{r})D_{x}^{\gamma}(4(\theta^{N})^{3}\theta_{r}-\overline{f_{r}})\mathrm{d}\vec x\mathrm{d}s+\tilde{L}_{2}
\end{aligned}
\end{equation}
where
\begin{equation}\label{eqjia2}
\begin{aligned}
\tilde{L}_{2}=&\int_{0}^{t}\int_{\mathbb{T}^{3}}\frac{1}{4(\theta^{N})^{3}}D_{x}^{\gamma}(4(\theta^{N})^{3}\theta_{r})[D_{x}^{\gamma}, \frac{1}{(\tilde{\theta})^{2}\epsilon^{2}}](4(\theta^{N})^{3}\theta_{r}-\overline{f_{r}})\mathrm{d}\vec x\mathrm{d}s
\end{aligned}
\end{equation}
\begin{equation}\nonumber
\begin{aligned}
\\&+\int_{0}^{t}\int_{\mathbb{T}^{3}}\Big([D_{x}^{\gamma}, \frac{1}{4(\theta^{N})^{3}}]4L_{0}\theta_{r}\Big) \frac{1}{(\tilde{\theta})^{2}\epsilon^{2}}D_{x}^{\gamma}(4(\theta^{N})^{3}\theta_{r}-\overline{f_{r}})\mathrm{d}\vec x\mathrm{d}s
\\&+\int_{0}^{t}\int_{\mathbb{T}^{3}}\Big([D_{x}^{\gamma}, \frac{1}{4(\theta^{N})^{3}}]\theta_{r}\Big)[D_{x}^{\gamma}, \frac{1}{(\tilde{\theta})^{2}\epsilon^{2}}](4(\theta^{N})^{3}\theta_{r}-\overline{f_{r}})\mathrm{d}\vec x\mathrm{d}s
\\\leq& C(\|\theta_{r}\|_{L_{t}^{2}L_{\vec x}^{2}}^{2}+\|D_{x}^{\gamma}\theta_{r}\|_{L_{t}^{2}L_{\vec x}^{2}}^{2}+\|f_{r}\|_{L_{t}^{2}L_{\vec x}^{2}L_{\vec w}^{2}}^{2}+\|D_{x}^{\gamma}f_{r}\|_{L_{t}^{2}L_{\vec x}^{2}L_{\vec w}^{2}}^{2})\\&+\frac{C}{\epsilon^{4}}(\|\theta_{r}\|_{L_{t}^{2}L_{\vec x}^{2}}^{2}+\|D_{x}^{\gamma-1}\theta_{r}\|_{L_{t}^{2}L_{\vec x}^{2}}^{2}+\|f_{r}\|_{L_{t}^{2}L_{\vec x}^{2}L_{\vec w}^{2}}^{2}+\|D_{x}^{\gamma-1}f_{r}\|_{L_{t}^{2}L_{\vec x}^{2}L_{\vec w}^{2}}^{2}),
\end{aligned}
\end{equation}
and $\tilde{L}_{2}=0$ when $\gamma=0$ by the properties of the communicator operator. Provided $\eta$ and $\epsilon>0$ are sufficiently small, one can choose positive constants $c_{1}$ and $c_{2}$ satisfying $c_{1}<4(\theta^{N})^{3}(\tilde{\theta})^{2}<c_{2}$. So, we can verify that
\begin{equation}\label{qingkuang1}
\begin{aligned}
&\int_{0}^{t}\int_{\mathbb{T}^{3}}\frac{1}{4(\theta^{N})^{3}(\tilde{\theta})^{2}\epsilon^{2}}D_{x}^{\gamma}(4(\theta^{N})^{3}\theta_{r})D_{x}^{\gamma}(4(\theta^{N})^{3}\theta_{r}-\overline{f_{r}})\mathrm{d}\vec x\mathrm{d}s\\\leq&\frac{1}{c_{1}}\int_{0}^{t}\int_{\mathbb{T}^{3}}\frac{1}{\epsilon^{2}}D_{x}^{\gamma}(4(\theta^{N})^{3}\theta_{r})D_{x}^{\gamma}(4(\theta^{N})^{3}\theta_{r}-\overline{f_{r}})\mathrm{d}\vec x\mathrm{d}s
\end{aligned}
\end{equation}
or
\begin{equation}\label{qingkuang2}
\begin{aligned}
&\int_{0}^{t}\int_{\mathbb{T}^{3}}\frac{1}{4(\theta^{N})^{3}(\tilde{\theta})^{2}\epsilon^{2}}D_{x}^{\gamma}(4L_{0}\theta_{r})D_{x}^{\gamma}(4(\theta^{N})^{3}\theta_{r}-\overline{f_{r}})\mathrm{d}\vec x\mathrm{d}s\\\leq&\frac{1}{c_{2}}\int_{0}^{t}\int_{\mathbb{T}^{3}}\frac{1}{\epsilon^{2}}D_{x}^{\gamma}(4(\theta^{N})^{3}\theta_{r})D_{x}^{\gamma}(4(\theta^{N})^{3}\theta_{r}-\overline{f_{r}})\mathrm{d}\vec x\mathrm{d}s.
\end{aligned}
\end{equation}
In view of \eqref{qingkuang1} and \eqref{qingkuang2}, it follows that
\begin{equation}\label{qingkuang3}
\begin{aligned}
&\int_{0}^{t}\int_{\mathbb{T}^{3}}\frac{1}{4(\theta^{N})^{3}(\tilde{\theta})^{2}\epsilon^{2}}D_{x}^{\gamma}(4(\theta^{N})^{3}\theta_{r})D_{x}^{\gamma}(4(\theta^{N})^{3}\theta_{r}-\overline{f_{r}})\mathrm{d}\vec x\mathrm{d}s\\\leq&\frac{1}{c}\int_{0}^{t}\int_{\mathbb{T}^{3}}\frac{1}{\epsilon^{2}}D_{x}^{\gamma}(4(\theta^{N})^{3}\theta_{r})D_{x}^{\gamma}(4(\theta^{N})^{3}\theta_{r}-\overline{f_{r}})\mathrm{d}\vec x\mathrm{d}s,
\end{aligned}
\end{equation}
where $c=c_{1}$ or $c=c_{2}$ and $c>0$.

For $J_{3}^{2}$, we have
\begin{equation}\nonumber
\begin{aligned}
J_{3}^{2}=&\int_{0}^{t}\int_{\mathbb{T}^{3}}E^{0}(\tilde V)[D_{x}^{\gamma}, (E^{0}(\tilde V))^{-1}]\vec H\cdot D_{x}^{\gamma}V_{r}\mathrm{d}\vec x\mathrm{d}s
\\\leq& \int_{0}^{t}\int_{\mathbb{T}^{3}}|D_{x}^{\gamma}V_{r}|^{2}\mathrm{d}\vec x\mathrm{d}s+C\int_{0}^{t}\int_{\mathbb{T}^{3}}(|\vec H|^{2}+|D_{x}^{\gamma-1}\vec H|^{2})\mathrm{d}\vec x\mathrm{d}s
\\\leq& C(\|D_{x}^{\gamma}V_{r}\|_{L_{t}^{2}L_{\vec x}^{2}}+\frac{C}{\epsilon^{4}}\|(R_{1}, R_{2}, R_{3}, R_{4}, R)\|_{L_{t}^{2}L_{\vec w}^{2}H_{\vec x}^{3}}^{2}+\frac{C}{\epsilon^{2}}(\|f_{r}-\overline{f_{r}}\|_{L_{t}^{2}L_{\vec w}^{2}L_{\vec x}^{2}}^{2}+\|D_{x}^{\gamma-1}f_{r}\\&-D_{x}^{\gamma-1}\overline{f_{r}}\|_{L_{t}^{2}L_{\vec w}^{2}L_{\vec x}^{2}}^{2})+\frac{C}{\epsilon^{4}}(\|f_{r}\|_{L_{t}^{2}L_{\vec w}^{2}L_{\vec x}^{2}}^{2}+\|D_{x}^{\gamma-1}f_{r}\|_{L_{t}^{2}L_{\vec w}^{2}L_{\vec x}^{2}}^{2}+\|\theta_{r}\|_{L_{t}^{2}L_{\vec w}^{2}}^{2}+\|D_{x}^{\gamma-1}\theta_{r}\|_{L_{t}^{2}L_{\vec x}^{2}}^{2}),
\end{aligned}
\end{equation}
where we note that $J_{32}=0$ when $\gamma=0$ by the commutator estimate.

Collecting \eqref{guji1}-\eqref{guji6} and respective estimates, we have
\begin{equation}\label{guji7}
\begin{aligned}
&\frac{\lambda}{2}\|D_{x}^{\gamma}V_{r}\|_{L_{\vec x}^{2}}^{2}\mathrm{d}\vec x\\\leq&\frac{1}{2}\int_{\mathbb{T}^{3}}(E^{0}(\tilde V)D_{x}^{\gamma}V_{r})\cdot D_{x}^{\gamma}V_{r}\mathrm{d}\vec x\\\leq&C\Big(\frac{1}{\epsilon^{4}}\|(R_{1}, R_{2}, R_{3}, R_{4}, R)\|_{L_{t}^{2}L_{\vec w}^{2}H_{\vec x}^{3}}^{2}+\frac{\delta}{\epsilon^{2}}(\|D_{x}^{\gamma}f_{r}-D_{x}^{\gamma}\overline{f_{r}}\|_{L_{t}^{2}L_{\vec x}^{2}}^{2}+\|f_{r}-\overline{f_{r}}\|_{L_{t}^{2}L_{\vec x}^{2}}^{2})\Big)
\\&+C(\|V_{r}\|_{L_{t}^{2}L_{\vec x}^{2}}+\|D_{x}^{\gamma}V_{r}\|_{L_{t}^{2}L_{\vec x}^{2}}+\|f_{r}\|_{L_{t}^{2}L_{\vec x}^{2}L_{\vec w}^{2}}^{2}+\|D_{x}^{\gamma}f_{r}\|_{L_{t}^{2}L_{\vec x}^{2}L_{\vec w}^{2}}^{2})+\frac{C}{\epsilon^{2}}(\|f_{r}-\overline{f_{r}}\|_{L_{t}^{2}L_{\vec w}^{2}L_{\vec x}^{2}}^{2}\\&+\|D_{x}^{\gamma-1}f_{r}-D_{x}^{\gamma-1}\overline{f_{r}}\|_{L_{t}^{2}L_{\vec w}^{2}L_{\vec x}^{2}}^{2})+\frac{C}{\epsilon^{4}}(\|f_{r}\|_{L_{t}^{2}L_{\vec w}^{2}L_{\vec x}^{2}}^{2}+\|D_{x}^{\gamma-1}f_{r}\|_{L_{t}^{2}L_{\vec w}^{2}L_{\vec x}^{2}}^{2}+\|\theta_{r}\|_{L_{t}^{2}L_{\vec w}^{2}}^{2}\\&+\|D_{x}^{\gamma-1}\theta_{r}\|_{L_{t}^{2}L_{\vec x}^{2}}^{2})
+C(\epsilon+\eta)(\|V_{r}\|_{L_{t}^{\infty}L_{\vec x}^{2}}^{2}+\|D_{x}^{\gamma}V_{r}\|_{L_{t}^{\infty}L_{\vec x}^{2}}^{2})\\&+\frac{1}{c\epsilon^{2}}\int_{0}^{t}\int_{\mathbb{T}^{3}}D_{x}^{\gamma}(4(\theta^{N})^{3}\theta_{r})D_{x}^{\gamma}(4(\theta^{N})^{3}\theta_{r}-\overline{f_{r}})\mathrm{d}\vec x\mathrm{d}s,
\end{aligned}
\end{equation}
specially, for $\gamma=0$,
\begin{equation}\label{guji8}
\begin{aligned}
\frac{\lambda}{2}\|V_{r}\|_{L_{\vec x}^{2}}^{2}\mathrm{d}\vec x\leq& C\Big(\frac{1}{\epsilon^{4}}\|(R_{1}, \vec{R}_{2}, R_{3}, R_{4}, R)\|_{L_{t}^{2}L_{\vec w}^{2}H_{\vec x}^{3}}^{2}+\frac{\delta}{\epsilon^{2}}\|f_{r}-\overline{f_{r}}\|_{L_{t}^{2}L_{\vec x}^{2}}^{2})\\&+C(\|V_{r}\|_{L_{t}^{2}L_{\vec x}^{2}}+\|f_{r}\|_{L_{t}^{2}L_{\vec x}^{2}L_{\vec w}^{2}}^{2})+\frac{1}{c\epsilon^{2}}\int_{0}^{t}\int_{\mathbb{T}^{3}}4(\theta^{N})^{3}\theta_{r}(4(\theta^{N})^{3}\theta_{r}-\overline{f_{r}})\mathrm{d}\vec x\mathrm{d}s
\\&+C(\epsilon+\eta)\|V_{r}\|_{L_{t}^{\infty}L_{\vec x}^{2}}^{2}.
\end{aligned}
\end{equation}

Upon applying $D_{x}^{\gamma}$, $0\leq\gamma\leq 3$ to both sides of \eqref{reeq2-jia-jia}, multiplying the resulting equation by $D_{x}^{\gamma}f_{r}$, and then integrating with respect to $s\in(0, t)$, $\vec x\in\mathbb{T}^{3}$ and $\vec w\in\mathbb{S}^{2}$, we arrive at
\begin{equation}\label{guji9}
\begin{aligned}
&\int_{\mathbb{T}^{3}}\int_{\mathbb{S}^{2}}|D_{x}^{\gamma}f_{r}|^{2}\mathrm{d}\vec w\mathrm{d}\vec x
+\Big(1+\frac{1}{\epsilon^{2}}\Big)\int_{0}^{t}\int_{\mathbb{T}^{3}}\int_{\mathbb{S}^{2}}|D_{x}^{\gamma}f_{r}-D_{x}^{\gamma}\overline{f_{r}}|^{2}
\mathrm{d}\vec w\mathrm{d}\vec x\mathrm{d}s\\=&\frac{1}{\epsilon^{2}}\int_{0}^{t}\int_{\mathbb{T}^{3}}\int_{\mathbb{S}^{2}}D_{x}^{\gamma}(R_{4}-R)D_{x}^{\gamma}f_{r}\mathrm{d}\vec w\mathrm{d}\vec x\mathrm{d}s+\frac{1}{\epsilon^{2}}\int_{0}^{t}\int_{\mathbb{T}^{3}}\int_{\mathbb{S}^{2}}D_{x}^{\gamma}(4(\theta^{N})^{3}\theta_{r}-\overline{f_{r}})D_{x}^{\gamma}f_{r}\mathrm{d}\vec w\mathrm{d}\vec x\mathrm{d}s
\\:=&G_{1}+G_{2},
\end{aligned}
\end{equation}
where
\begin{equation}\nonumber
\begin{aligned}
G_{1}=&\frac{1}{\epsilon^{2}}\int_{0}^{t}\int_{\mathbb{T}^{3}}\int_{\mathbb{S}^{2}}D_{x}^{\gamma}(R_{4}-R)D_{x}^{\gamma}f_{r}\mathrm{d}\vec w\mathrm{d}\vec x\mathrm{d}s
\\\leq& \int_{0}^{t}\int_{\mathbb{T}^{3}}\int_{\mathbb{S}^{2}}|D_{x}^{\gamma}f_{r}|^{2}\mathrm{d}\vec w\mathrm{d}\vec x\mathrm{d}s+\frac{C}{\epsilon^{4}}\int_{0}^{t}\int_{\mathbb{T}^{3}}\int_{\mathbb{S}^{2}}|D_{x}^{\gamma}(R_{4}-R)|^{2}\mathrm{d}\vec w\mathrm{d}\vec x\mathrm{d}s
\end{aligned}
\end{equation}
and
\begin{equation}\nonumber
\begin{aligned}
G_{2}=&\frac{1}{\epsilon^{2}}\int_{0}^{t}\int_{\mathbb{T}^{3}}\int_{\mathbb{S}^{2}}D_{x}^{\gamma}(4(\theta^{N})^{3}\theta_{r}-\overline{f_{r}})D_{x}^{\gamma}f_{r}\mathrm{d}\vec w\mathrm{d}\vec x\mathrm{d}s=\frac{4\pi}{\epsilon^{2}}\int_{0}^{t}\int_{\mathbb{T}^{3}}(D_{x}^{\gamma}(4(\theta^{N})^{3}\theta_{r}-\overline{f_{r}})D_{x}^{\gamma}\overline{f_{r}}\mathrm{d}\vec x\mathrm{d}s.
\end{aligned}
\end{equation}

Upon multiplying \eqref{guji7} by $c$ and \eqref{guji9} by $\frac{1}{4\pi}$, respectively, and summing the resulting estimates, we obtain
\begin{equation}\label{guji10}
\begin{aligned}
&\frac{\lambda c}{2}\|D_{x}^{\gamma}V_{r}\|_{L_{\vec x}^{2}}^{2}+\frac{1}{4\pi}\int_{\mathbb{T}^{3}}\int_{\mathbb{S}^{2}}|D_{x}^{\gamma}f_{r}|^{2}\mathrm{d}\vec w\mathrm{d}\vec x
+\frac{1}{4\pi}\Big(1+\frac{1}{\epsilon^{2}}\Big)\int_{0}^{t}\int_{\mathbb{T}^{3}}\int_{\mathbb{S}^{2}}|D_{x}^{\gamma}f_{r}-D_{x}^{\gamma}\overline{f_{r}}|^{2}
\mathrm{d}\vec w\mathrm{d}\vec x\mathrm{d}s\\&+\frac{1}{4\pi\epsilon^{2}}\int_{0}^{t}\int_{\mathbb{T}^{3}}|D_{x}^{\gamma}(4(\theta^{N})^{3}\theta_{r}-\overline{f_{r}})|^{2}\mathrm{d}\vec x\mathrm{d}s
\\\leq&Cc\Big(\frac{1}{\epsilon^{4}}\|(R_{1}, R_{2}, R_{3}, R_{4}, R)\|_{L_{t}^{2}L_{\vec w}^{2}H_{\vec x}^{3}}^{2}+\frac{\delta}{\epsilon^{2}}(\|D_{x}^{\gamma}f_{r}-D_{x}^{\gamma}\overline{f_{r}}\|_{L_{t}^{2}L_{\vec x}^{2}}^{2}+\|f_{r}-\overline{f_{r}}\|_{L_{t}^{2}L_{\vec x}^{2}}^{2})\Big)\\&+Cc(\|V_{r}\|_{L_{t}^{2}L_{\vec x}^{2}}+\|D_{x}^{\gamma}V_{r}\|_{L_{t}^{2}L_{\vec x}^{2}}+\|f_{r}\|_{L_{t}^{2}L_{\vec x}^{2}L_{\vec w}^{2}}^{2}+\|D_{x}^{\gamma}f_{r}\|_{L_{t}^{2}L_{\vec x}^{2}L_{\vec w}^{2}}^{2})+\frac{Cc}{\epsilon^{2}}(\|f_{r}-\overline{f_{r}}\|_{L_{t}^{2}L_{\vec w}^{2}L_{\vec x}^{2}}^{2}\\&+\|D_{x}^{\gamma-1}f_{r}-D_{x}^{\gamma-1}\overline{f_{r}}\|_{L_{t}^{2}L_{\vec w}^{2}L_{\vec x}^{2}}^{2})+\frac{Cc}{\epsilon^{4}}(\|f_{r}\|_{L_{t}^{2}L_{\vec w}^{2}L_{\vec x}^{2}}^{2}+\|D_{x}^{\gamma-1}f_{r}\|_{L_{t}^{2}L_{\vec w}^{2}L_{\vec x}^{2}}^{2}+\|\theta_{r}\|_{L_{t}^{2}L_{\vec w}^{2}}^{2}\\&+\|D_{x}^{\gamma-1}\theta_{r}\|_{L_{t}^{2}L_{\vec x}^{2}}^{2})
+Cc\eta(\|\theta_{r}\|_{L_{t}^{\infty}L_{\vec x}^{2}}^{2}+\|D_{x}^{\gamma}\theta_{r}\|_{L_{t}^{\infty}L_{\vec x}^{2}}^{2})+\frac{C}{\epsilon^{4}}\|(R_{2}, R)\|_{L_{t}^{2}L_{\vec w}^{2}H_{\vec x}^{3}}^{2}\\&+C\eta\|D_{x}^{\gamma}f_{r}\|_{L_{t}^{\infty}L_{\vec w}^{2}L_{\vec x}^{2}}^{2}+Cc(\epsilon+\eta)(\|V_{r}\|_{L_{t}^{\infty}L_{\vec x}^{2}}^{2}+\|D_{x}^{\gamma}V_{r}\|_{L_{t}^{\infty}L_{\vec x}^{2}}^{2}).
\end{aligned}
\end{equation}
Choose $\delta$ such that $Cc\delta=\frac{1}{8\pi}$. Then we have
\begin{equation}\label{guji11}
\begin{aligned}
&\frac{\lambda c}{2}\|D_{x}^{\gamma}V_{r}\|_{L_{\vec x}^{2}}^{2}+\frac{1}{8\pi}\int_{\mathbb{T}^{3}}\int_{\mathbb{S}^{2}}|D_{x}^{\gamma}f_{r}|^{2}\mathrm{d}\vec w\mathrm{d}\vec x
+\frac{1}{4\pi}\Big(1+\frac{1}{\epsilon^{2}}\Big)\int_{0}^{t}\int_{\mathbb{T}^{3}}\int_{\mathbb{S}^{2}}|D_{x}^{\gamma}f_{r}-D_{x}^{\gamma}\overline{f_{r}}|^{2}
\mathrm{d}\vec w\mathrm{d}\vec x\mathrm{d}s
\\&+\frac{1}{4\pi\epsilon^{2}}\int_{0}^{t}\int_{\mathbb{T}^{3}}|D_{x}^{\gamma}(4(\theta^{N})^{3}\theta_{r}-\overline{f_{r}})|^{2}\mathrm{d}\vec x\mathrm{d}s\\\leq&Cc\Big(\frac{1}{\epsilon^{4}}\|(R_{1}, R_{2}, R_{3}, R_{4}, R)\|_{L_{t}^{2}L_{\vec w}^{2}H_{\vec x}^{3}}^{2}+\frac{1}{\epsilon^{2}}\|f_{r}-\overline{f_{r}}\|_{L_{t}^{2}L_{\vec x}^{2}}^{2}\Big)\\&+Cc(\|V_{r}\|_{L_{t}^{2}L_{\vec x}^{2}}+\|D_{x}^{\gamma}V_{r}\|_{L_{t}^{2}L_{\vec x}^{2}}+\|f_{r}\|_{L_{t}^{2}L_{\vec x}^{2}L_{\vec w}^{2}}^{2}+\|D_{x}^{\gamma}f_{r}\|_{L_{t}^{2}L_{\vec x}^{2}L_{\vec w}^{2}}^{2})+\frac{Cc}{\epsilon^{2}}(\|f_{r}-\overline{f_{r}}\|_{L_{t}^{2}L_{\vec w}^{2}L_{\vec x}^{2}}^{2}\\&+\|D_{x}^{\gamma-1}f_{r}-D_{x}^{\gamma-1}\overline{f_{r}}\|_{L_{t}^{2}L_{\vec w}^{2}L_{\vec x}^{2}}^{2})+\frac{Cc}{\epsilon^{4}}(\|f_{r}\|_{L_{t}^{2}L_{\vec w}^{2}L_{\vec x}^{2}}^{2}+\|D_{x}^{\gamma-1}f_{r}\|_{L_{t}^{2}L_{\vec w}^{2}L_{\vec x}^{2}}^{2}+\|\theta_{r}\|_{L_{t}^{2}L_{\vec w}^{2}}^{2}\\&+\|D_{x}^{\gamma-1}\theta_{r}\|_{L_{t}^{2}L_{\vec x}^{2}}^{2})
+Cc\eta(\|\theta_{r}\|_{L_{t}^{\infty}L_{\vec x}^{2}}^{2}+\|D_{x}^{\gamma}\theta_{r}\|_{L_{t}^{\infty}L_{\vec x}^{2}}^{2})+\frac{C}{\epsilon^{4}}\|(R_{2}, R)\|_{L_{t}^{2}L_{\vec w}^{2}H_{\vec x}^{3}}^{2}\\&+C\eta\|D_{x}^{\gamma}f_{r}\|_{L_{t}^{\infty}L_{\vec w}^{2}L_{\vec x}^{2}}^{2}+Cc(\epsilon+\eta)(\|V_{r}\|_{L_{t}^{\infty}L_{\vec x}^{2}}^{2}+\|D_{x}^{\gamma}V_{r}\|_{L_{t}^{\infty}L_{\vec x}^{2}}^{2}),
\end{aligned}
\end{equation}
which can be further rewritten as
\begin{equation}\label{guji12}
\begin{aligned}
&\|D_{x}^{\gamma}V_{r}(t)\|_{L_{\vec x}^{2}}^{2}+\|D_{x}^{\gamma}f_{r}(t)\|_{L_{\vec w}^{2}L_{\vec x}^{2}}^{2}
+\frac{1}{\epsilon^{2}}\int_{0}^{t}\int_{\mathbb{T}^{3}}\int_{\mathbb{S}^{2}}|D_{x}^{\gamma}f_{r}-D_{x}^{\gamma}\overline{f_{r}}|^{2}
\mathrm{d}\vec w\mathrm{d}\vec x\mathrm{d}s\\&+\frac{1}{\epsilon^{2}}\int_{0}^{t}\int_{\mathbb{T}^{3}}|D_{x}^{\gamma}(4(\theta^{N})^{3}\theta_{r}-\overline{f_{r}})|^{2}\mathrm{d}\vec x\mathrm{d}s\\\leq&C\Big(\frac{1}{\epsilon^{4}}\|(R_{1}, R_{2}, R_{3}, R_{4}, R)\|_{L_{t}^{2}L_{\vec w}^{2}H_{\vec x}^{3}}^{2}+\frac{1}{\epsilon^{2}}\|f_{r}-\overline{f_{r}}\|_{L_{t}^{2}L_{\vec x}^{2}}^{2}\Big)\\&+C(\|V_{r}\|_{L_{t}^{2}L_{\vec x}^{2}}+\|D_{x}^{\gamma}V_{r}\|_{L_{t}^{2}L_{\vec x}^{2}}+\|f_{r}\|_{L_{t}^{2}L_{\vec x}^{2}L_{\vec w}^{2}}^{2}+\|D_{x}^{\gamma}f_{r}\|_{L_{t}^{2}L_{\vec x}^{2}L_{\vec w}^{2}}^{2})+\frac{C}{\epsilon^{2}}(\|f_{r}-\overline{f_{r}}\|_{L_{t}^{2}L_{\vec w}^{2}L_{\vec x}^{2}}^{2}
\\&+\|D_{x}^{\gamma-1}f_{r}-D_{x}^{\gamma-1}\overline{f_{r}}\|_{L_{t}^{2}L_{\vec w}^{2}L_{\vec x}^{2}}^{2})+\frac{C}{\epsilon^{4}}(\|f_{r}\|_{L_{t}^{2}L_{\vec w}^{2}L_{\vec x}^{2}}^{2}+\|D_{x}^{\gamma-1}f_{r}\|_{L_{t}^{2}L_{\vec w}^{2}L_{\vec x}^{2}}^{2}+\|\theta_{r}\|_{L_{t}^{2}L_{\vec w}^{2}}^{2}
\end{aligned}
\end{equation}
\begin{equation}\nonumber
\begin{aligned}
\\&+\|D_{x}^{\gamma-1}\theta_{r}\|_{L_{t}^{2}L_{\vec x}^{2}}^{2})
+C\eta(\|V_{r}\|_{L_{t}^{\infty}L_{\vec x}^{2}}^{2}+\|D_{x}^{\gamma}V_{r}\|_{L_{t}^{\infty}L_{\vec x}^{2}}^{2})+C\eta\|D_{x}^{\gamma}f_{r}\|_{L_{t}^{\infty}L_{\vec w}^{2}L_{\vec x}^{2}}^{2}.\hspace{2cm}
\end{aligned}
\end{equation}
As $t\in [0, T]$ was chosen arbitrarily, it follows that
\begin{equation}\label{guji12}
\begin{aligned}
&\|D_{x}^{\gamma}V_{r}\|_{L_{t}^{\infty}L_{\vec x}^{2}}^{2}+\|D_{x}^{\gamma}f_{r}\|_{L_{t}^{\infty}L_{\vec w}^{2}L_{\vec x}^{2}}^{2}
+\frac{1}{\epsilon^{2}}\int_{0}^{t}\int_{\mathbb{T}^{3}}\int_{\mathbb{S}^{2}}|D_{x}^{\gamma}f_{r}-D_{x}^{\gamma}\overline{f_{r}}|^{2}
\mathrm{d}\vec w\mathrm{d}\vec x\mathrm{d}s\\&+\frac{1}{\epsilon^{2}}\int_{0}^{t}\int_{\mathbb{T}^{3}}|D_{x}^{\gamma}(4(\theta^{N})^{3}\theta_{r}-\overline{f_{r}})|^{2}\mathrm{d}\vec x\mathrm{d}s\\\leq&C\Big(\frac{1}{\epsilon^{4}}\|(R_{1}, \vec{R}_{2}, R_{3}, R_{4}, R)\|_{L_{t}^{2}L_{\vec w}^{2}H_{\vec x}^{3}}^{2}+\frac{1}{\epsilon^{2}}\|f_{r}-\overline{f_{r}}\|_{L_{t}^{2}L_{\vec w}^{2}L_{\vec x}^{2}}^{2}\Big)\\&+C(\|V_{r}\|_{L_{t}^{2}L_{\vec x}^{2}}+\|D_{x}^{\gamma}V_{r}\|_{L_{t}^{2}L_{\vec x}^{2}}+\|f_{r}\|_{L_{t}^{2}L_{\vec x}^{2}L_{\vec w}^{2}}^{2}+\|D_{x}^{\gamma}f_{r}\|_{L_{t}^{2}L_{\vec w}^{2}L_{\vec x}^{2}}^{2})+\frac{C}{\epsilon^{2}}(\|f_{r}-\overline{f_{r}}\|_{L_{t}^{2}L_{\vec w}^{2}L_{\vec x}^{2}}^{2}\\&+\|D_{x}^{\gamma-1}f_{r}-D_{x}^{\gamma-1}\overline{f_{r}}\|_{L_{t}^{2}L_{\vec w}^{2}L_{\vec x}^{2}}^{2})+\frac{C}{\epsilon^{4}}(\|f_{r}\|_{L_{t}^{2}L_{\vec w}^{2}L_{\vec x}^{2}}^{2}+\|D_{x}^{\gamma-1}f_{r}\|_{L_{t}^{2}L_{\vec w}^{2}L_{\vec x}^{2}}^{2}+\|\theta_{r}\|_{L_{t}^{2}L_{\vec w}^{2}}^{2}\\&+\|D_{x}^{\gamma-1}\theta_{r}\|_{L_{t}^{2}L_{\vec x}^{2}}^{2})
+C(\eta+\epsilon)(\|V_{r}\|_{L_{t}^{\infty}L_{\vec x}^{2}}^{2}+\|D_{x}^{\gamma}V_{r}\|_{L_{t}^{\infty}L_{\vec x}^{2}}^{2})+C\eta\|D_{x}^{\gamma}f_{r}\|_{L_{t}^{\infty}L_{\vec w}^{2}L_{\vec x}^{2}}^{2}.
\end{aligned}
\end{equation}
Choose small $\eta$ and $\epsilon$ such that $C(\eta+\epsilon)=\frac{1}{2}$. Then we have
\begin{equation}\label{guji12}
\begin{aligned}
&\frac{1}{2}(\|D_{x}^{\gamma}V_{r}\|_{L_{t}^{\infty}L_{\vec x}^{2}}^{2}+\|D_{x}^{\gamma}f_{r}\|_{L_{t}^{\infty}L_{\vec w}^{2}L_{\vec x}^{2}}^{2})
+\frac{1}{\epsilon^{2}}\int_{0}^{t}\int_{\mathbb{T}^{3}}\int_{\mathbb{S}^{2}}|D_{x}^{\gamma}f_{r}-D_{x}^{\gamma}\overline{f_{r}}|^{2}
\mathrm{d}\vec w\mathrm{d}\vec x\mathrm{d}s
\\&+\frac{1}{\epsilon^{2}}\int_{0}^{t}\int_{\mathbb{T}^{3}}|D_{x}^{\gamma}(4(\theta^{N})^{3}\theta_{r}-\overline{f_{r}})|^{2}\mathrm{d}\vec x\mathrm{d}s\\\leq&C\Big(\frac{1}{\epsilon^{4}}\|(R_{1}, \vec{R}_{2}, R_{3}, R_{4}, R)\|_{L_{t}^{2}L_{\vec w}^{2}H_{\vec x}^{3}}^{2}+\frac{1}{\epsilon^{2}}\|f_{r}-\overline{f_{r}}\|_{L_{t}^{2}L_{\vec w}^{2}L_{\vec x}^{2}}^{2}\Big)+C(\|V_{r}\|_{L_{t}^{2}L_{\vec x}^{2}}+\|D_{x}^{\gamma}V_{r}\|_{L_{t}^{2}L_{\vec x}^{2}}\\&+\|f_{r}\|_{L_{t}^{2}L_{\vec w}^{2}L_{\vec x}^{2}}^{2}+\|D_{x}^{\gamma}f_{r}\|_{L_{t}^{2}L_{\vec w}^{2}L_{\vec x}^{2}}^{2})+\frac{C}{\epsilon^{2}}(\|f_{r}-\overline{f_{r}}\|_{L_{t}^{2}L_{\vec w}^{2}L_{\vec x}^{2}}^{2}+\|D_{x}^{\gamma-1}f_{r}-D_{x}^{\gamma-1}\overline{f_{r}}\|_{L_{t}^{2}L_{\vec w}^{2}L_{\vec x}^{2}}^{2})\\&+\frac{C}{\epsilon^{4}}(\|f_{r}\|_{L_{t}^{2}L_{\vec w}^{2}L_{\vec x}^{2}}^{2}+\|D_{x}^{\gamma-1}f_{r}\|_{L_{t}^{2}L_{\vec w}^{2}L_{\vec x}^{2}}^{2}+\|\theta_{r}\|_{L_{t}^{2}L_{\vec w}^{2}}^{2}+\|D_{x}^{\gamma-1}\theta_{r}\|_{L_{t}^{2}L_{\vec x}^{2}}^{2}).
\end{aligned}
\end{equation}
Furthermore, 
\begin{equation}\label{guji13}
\begin{aligned}
&\|D_{x}^{\gamma}V_{r}(t)\|_{L_{\vec x}^{2}}^{2}+\|D_{x}^{\gamma}f_{r}(t)\|_{L_{\vec w}^{2}L_{\vec x}^{2}}^{2}
+\frac{1}{\epsilon^{2}}\int_{0}^{t}\int_{\mathbb{T}^{3}}\int_{\mathbb{S}^{2}}|D_{x}^{\gamma}f_{r}-D_{x}^{\gamma}\overline{f_{r}}|^{2}
\mathrm{d}\vec w\mathrm{d}\vec x\mathrm{d}s\\&+\frac{1}{\epsilon^{2}}\int_{0}^{t}\int_{\mathbb{T}^{3}}|D_{x}^{\gamma}(4(\theta^{N})^{3}\theta_{r}-\overline{f_{r}})|^{2}\mathrm{d}\vec x\mathrm{d}s
\\\leq&C\Big(\frac{1}{\epsilon^{4}}\|(R_{1}, R_{2}, R_{3}, R_{4}, R)\|_{L_{t}^{2}L_{\vec w}^{2}H_{\vec x}^{3}}^{2}+\frac{1}{\epsilon^{2}}\|f_{r}-\overline{f_{r}}\|_{L_{t}^{2}L_{\vec w}^{2}L_{\vec x}^{2}}^{2}\Big)+C(\|V_{r}\|_{L_{t}^{2}L_{\vec x}^{2}}+\|D_{x}^{\gamma}V_{r}\|_{L_{t}^{2}L_{\vec x}^{2}}\\&+\|f_{r}\|_{L_{t}^{2}L_{\vec w}^{2}L_{\vec x}^{2}}^{2}+\|D_{x}^{\gamma}f_{r}\|_{L_{t}^{2}L_{\vec w}^{2}L_{\vec x}^{2}}^{2})+\frac{C}{\epsilon^{2}}(\|f_{r}-\overline{f_{r}}\|_{L_{t}^{2}L_{\vec w}^{2}L_{\vec x}^{2}}^{2}
+\|D_{x}^{\gamma-1}f_{r}-D_{x}^{\gamma-1}\overline{f_{r}}\|_{L_{t}^{2}L_{\vec w}^{2}L_{\vec x}^{2}}^{2})\\&+\frac{C}{\epsilon^{4}}(\|f_{r}\|_{L_{t}^{2}L_{\vec w}^{2}L_{\vec x}^{2}}^{2}+\|D_{x}^{\gamma-1}f_{r}\|_{L_{t}^{2}L_{\vec w}^{2}L_{\vec x}^{2}}^{2}+\|\theta_{r}\|_{L_{t}^{2}L_{\vec w}^{2}}^{2}+\|D_{x}^{\gamma-1}\theta_{r}\|_{L_{t}^{2}L_{\vec x}^{2}}^{2}).
\end{aligned}
\end{equation}
Specially, for $\gamma=0$, we can derive
\begin{equation}\label{guji14}
\begin{aligned}
&\|V_{r}(t)\|_{L_{\vec x}^{2}}^{2}+\|f_{r}(t)\|_{L_{\vec w}^{2}L_{\vec x}^{2}}^{2}
+\frac{1}{\epsilon^{2}}\int_{0}^{t}\int_{\mathbb{T}^{3}}\int_{\mathbb{S}^{2}}|f_{r}-\overline{f_{r}}|^{2}
\mathrm{d}\vec w\mathrm{d}\vec x\mathrm{d}s+\frac{1}{\epsilon^{2}}\int_{0}^{t}\int_{\mathbb{T}^{3}}|(4(\theta^{N})^{3}\theta_{r}-\overline{f_{r}})|^{2}\mathrm{d}\vec x\mathrm{d}s
\end{aligned}
\end{equation}
\begin{equation}\nonumber
\begin{aligned}
\\\leq&C\Big(\frac{1}{\epsilon^{4}}\|(R_{1}, R_{2}, R_{3}, R_{4}, R)\|_{L_{t}^{2}L_{\vec w}^{2}H_{\vec x}^{3}}^{2}\Big)+C(\|V_{r}\|_{L_{t}^{2}L_{\vec x}^{2}}+\|f_{r}\|_{L_{t}^{2}L_{\vec w}^{2}L_{\vec x}^{2}}^{2}).\hspace{4cm}
\end{aligned}
\end{equation}
Consequently, an application of Grönwall's inequality yields
\begin{equation}\label{guji15}
\begin{aligned}
&\|V_{r}\|_{L_{t}^{\infty}L_{\vec x}^{2}}^{2}+\|f_{r}\|_{L_{t}^{\infty}L_{\vec w}^{2}L_{\vec x}^{2}}^{2}
+\frac{1}{\epsilon^{2}}\int_{0}^{t}\int_{\mathbb{T}^{3}}\int_{\mathbb{S}^{2}}|f_{r}-\overline{f_{r}}|^{2}
\mathrm{d}\vec w\mathrm{d}\vec x\mathrm{d}s+\frac{1}{\epsilon^{2}}\int_{0}^{t}\int_{\mathbb{T}^{3}}|(4(\theta^{N})^{3}\theta_{r}-\overline{f_{r}})|^{2}\mathrm{d}\vec x\mathrm{d}s\\\leq&C(t)\Big(\frac{1}{\epsilon^{4}}\|(R_{1}, \vec{R}_{2}, R_{3}, R_{4}, R)\|_{L_{t}^{2}L_{\vec w}^{2}H_{\vec x}^{3}}^{2}\Big).
\end{aligned}
\end{equation}
From \eqref{guji15}, with $\gamma=1$, \eqref{guji13} can be written as
\begin{equation}\label{guji16}
\begin{aligned}
&\|D_{x}V_{r}(t)\|_{L_{\vec x}^{2}}^{2}+\|D_{x}f_{r}(t)\|_{L_{\vec w}^{2}L_{\vec x}^{2}}^{2}
+\frac{1}{\epsilon^{2}}\int_{0}^{t}\int_{\mathbb{T}^{3}}\int_{\mathbb{S}^{2}}|D_{x}f_{r}-D_{x}\overline{f_{r}}|^{2}
\mathrm{d}\vec w\mathrm{d}\vec x\mathrm{d}s\\&+\frac{1}{\epsilon^{2}}\int_{0}^{t}\int_{\mathbb{T}^{3}}|D_{x}(4(\theta^{N})^{3}\theta_{r}-\overline{f_{r}})|^{2}\mathrm{d}\vec x\mathrm{d}s\\\leq&C\Big(\frac{1}{\epsilon^{8}}\|(R_{1}, \vec{R}_{2}, R_{3}, R_{4}, R)\|_{L_{t}^{2}L_{\vec w}^{2}H_{\vec x}^{3}}^{2}\Big)+C(\|D_{x}V_{r}\|_{L_{t}^{2}L_{\vec x}^{2}}^{2}+\|D_{x}f_{r}\|_{L_{t}^{2}L_{\vec w}^{2}L_{\vec x}^{2}}^{2}).
\end{aligned}
\end{equation}
By the Gronwall's inequality, we have
\begin{equation}\label{guji17}
\begin{aligned}
&\|D_{x}V_{r}\|_{L_{t}^{\infty}L_{\vec x}^{2}}^{2}+\|D_{x}f_{r}\|_{L_{t}^{\infty}L_{\vec w}^{2}L_{\vec x}^{2}}^{2}
+\frac{1}{\epsilon^{2}}\int_{0}^{t}\int_{\mathbb{T}^{3}}\int_{\mathbb{S}^{2}}|D_{x}f_{r}-D_{x}\overline{f_{r}}|^{2}
\mathrm{d}\vec w\mathrm{d}\vec x\mathrm{d}s\\&+\frac{1}{\epsilon^{2}}\int_{0}^{t}\int_{\mathbb{T}^{3}}|D_{x}(4(\theta^{N})^{3}\theta_{r}-\overline{f_{r}})|^{2}\mathrm{d}\vec x\mathrm{d}s\\\leq&C(t)\Big(\frac{1}{\epsilon^{8}}\|(R_{1}, \vec{R}_{2}, R_{3}, R_{4}, R)\|_{L_{t}^{2}L_{\vec w}^{2}H_{\vec x}^{3}}^{2}\Big).
\end{aligned}
\end{equation}
Likewise, for $\gamma=2$ and $\gamma=3$, it follows that
\begin{equation}\label{guji18}
\begin{aligned}
&\|D_{x}^{2}V_{r}\|_{L_{t}^{\infty}L_{\vec x}^{2}}^{2}+\|D_{x}^{2}f_{r}\|_{L_{t}^{\infty}L_{\vec w}^{2}L_{\vec x}^{2}}^{2}
+\frac{1}{\epsilon^{2}}\int_{0}^{t}\int_{\mathbb{T}^{3}}\int_{\mathbb{S}^{2}}|D_{x}^{2}f_{r}-D_{x}^{2}\overline{f_{r}}|^{2}
\mathrm{d}\vec w\mathrm{d}\vec x\mathrm{d}s\\&+\frac{1}{\epsilon^{2}}\int_{0}^{t}\int_{\mathbb{T}^{3}}|D_{x}^{2}(4(\theta^{N})^{3}\theta_{r}-\overline{f_{r}})|^{2}\mathrm{d}\vec x\mathrm{d}s\leq C(t)\Big(\frac{1}{\epsilon^{12}}\|(R_{1}, \vec{R}_{2}, R_{3}, R_{4}, R)\|_{L_{t}^{2}L_{\vec w}^{2}H_{\vec x}^{3}}^{2}\Big)
\end{aligned}
\end{equation}
and
\begin{equation}\label{guji19}
\begin{aligned}
&\|D_{x}^{3}V_{r}\|_{L_{t}^{\infty}L_{\vec x}^{2}}^{2}+\|D_{x}^{3}f_{r}\|_{L_{t}^{\infty}L_{\vec w}^{2}L_{\vec x}^{2}}^{2}
+\frac{1}{\epsilon^{2}}\int_{0}^{t}\int_{\mathbb{T}^{3}}\int_{\mathbb{S}^{2}}|D_{x}^{3}f_{r}-D_{x}^{3}\overline{f_{r}}|^{2}
\mathrm{d}\vec w\mathrm{d}\vec x\mathrm{d}s\\&+\frac{1}{\epsilon^{2}}\int_{0}^{t}\int_{\mathbb{T}^{3}}|D_{x}^{3}(4(\theta^{N})^{3}\theta_{r}-\overline{f_{r}})|^{2}\mathrm{d}\vec x\mathrm{d}s\leq C(t)\Big(\frac{1}{\epsilon^{16}}\|(R_{1}, \vec{R}_{2}, R_{3}, R_{4}, R)\|_{L_{t}^{2}L_{\vec w}^{2}H_{\vec x}^{3}}^{2}\Big),
\end{aligned}
\end{equation}
respectively.

Collecting the estimates from $\gamma=0$ to $\gamma=3$, we get the following desired estimate
\begin{equation}\label{guji20}
\begin{aligned}
&\|V_{r}\|_{L_{t}^{\infty}H_{\vec x}^{3}}^{2}+\|f_{r}\|_{L_{t}^{\infty}L_{\vec w}^{2}H_{\vec x}^{3}}^{2}+\frac{1}{\epsilon^{2}}\|f_{r}-\overline{f_{r}}\|_{L_{t}^{2}L_{\vec w}^{2}H_{\vec x}^{3}}^{2}+\frac{1}{\epsilon^{2}}\|4(\theta^{N})^{3}\theta_{r}-\overline{f_{r}}\|_{L_{t}^{2}H_{\vec x}^{3}}^{2}
\\\leq& C(t)\Big(\frac{1}{\epsilon^{16}}\|(R_{1}, \vec{R}_{2}, R_{3}, R_{4}, R)\|_{L_{t}^{2}L_{\vec w}^{2}H_{\vec x}^{3}}^{2}\Big).
\end{aligned}
\end{equation}

\subsection{Proof of the main theorem}\label{sec4.2}

We now show the existence and uniqueness of solutions to system \eqref{research equations} around the constructed composite approximate solution $(\rho^{N}, \vec{u}^{N}, \theta^{N}, f^{N})$ and finish the proof of Theorem \ref{mainthm}. Due to the equivalence between system \eqref{reeq1}-\eqref{rein} and \eqref{research equations}, we only need to show the existence and uniqueness of solutions for \eqref{reeq1}-\eqref{rein} in the neighborhood of zero.

\noindent\textbf{Proof of Theorem \ref{mainthm}.} The proof of existence and uniqueness is obtained using the Banach fixed point theorem. We first construct a sequence of functions and then show the sequence satisfies the contraction principle. Finally we show the convergence of $(\rho_{r}, \vec{u}_{r}, \theta_{r}, f_{r})$ to zero as $\epsilon\rightarrow 0$.

\noindent\textbf{Step 1.}Construction of sequence of functions. Let $\{\rho_{r}^{0}, \vec{u}_{r}^{0}, \theta_{r}^{0}, f_{r}^{0}\}$ be zero functions
\begin{equation}\nonumber
\rho_{r}^{0}=0, \ \ \vec{u}_{r}^{0}=0, \ \ \theta_{r}^{0}=0,\ \ f_{r}^{0}=0,
\end{equation}
and for $k\geq 1,$ $\{\rho_{r}^{k+1}, \vec{u}_{r}^{k+1}, \theta_{r}^{k+1}, f_{r}^{k+1}\}$ are defined recursively by
\begin{equation}\label{reeq1-jiajia1}
\partial_{t}\rho_{r}^{k+1}+\vec{u}_{r}^{k+1}\cdot\nabla_{x}\rho^{N}+\tilde{\vec{u}}^{k}\cdot\nabla_{x}\rho_{r}^{k+1}+\rho_{r}^{k+1}\mathrm{div}\vec{u}^{N}
+\tilde{\rho}^{k}\mathrm{div}\vec{u}_{r}^{k+1}=-\mathcal{L}_{1}(\rho^{N}, \vec{u}^{N}),
\end{equation}
\begin{equation}\label{reeq2-jiajia1}
\begin{aligned}
&\epsilon^{2}\rho_{r}^{k+1}\partial_{t}\vec{u}^{N}+\epsilon^{2}\tilde{\rho}^{k}\partial_{t}\vec{u}_{r}^{k+1}+\epsilon^{2}(\tilde\rho^{k}\tilde{\vec{u}}^{k}\cdot\nabla_{x}\vec{u}_{r}^{k+1}
+\rho_{r}^{k+1}\tilde{\vec{u}}^{k}\cdot\nabla_{x}\vec{u}^{N}+\rho^{N}\vec{u}_{r}^{k+1}\cdot\nabla_{x}\vec{u}^{N})
\\&+\epsilon^{2}(\rho_{r}^{k+1}\nabla_{x}\theta^{N}+\tilde\rho^{k}\nabla_{x}\theta_{r}^{k+1})+\epsilon^{2}(\theta_{r}^{k+1}\nabla_{x}\rho^{N}+\tilde\theta^{k}\nabla_{x}\rho_{r}^{k+1})
-\langle(\epsilon+\epsilon^{3})\vec w(f_{r}^{k+1}-\overline{f_{r}^{k+1}})\rangle
\\=&-\mathcal{L}_{2}(\rho^{N}, \vec{u}^{N}, \theta^{N}, f^{N}),
\end{aligned}
\end{equation}
\begin{equation}\label{reeq3-jiajia1}
\begin{aligned}
&\epsilon^{2}(\tilde\rho^{k}\partial_{t}\theta_{r}^{k+1}+\rho_{r}^{k+1}\partial_{t}\theta^{N})+\epsilon^{2}(\tilde\rho^{k}\tilde{\vec{u}}^{k}\cdot\nabla_{x}\theta_{r}^{k+1}
+\rho_{r}^{k+1}\tilde{\vec{u}}^{k}\cdot\nabla_{x}\theta^{N}+\rho^{N}\vec{u}_{r}^{k+1}\cdot\nabla_{x}\theta^{N})
\\&+\epsilon^{2}(\tilde\rho^{k}\tilde\theta^{k}\mathrm{div}\vec {u}_{r}^{k+1}+\rho_{r}^{k+1}\theta^{N}\mathrm{div}\vec {u}^{N}+\rho^{N}\theta_{r}^{k+1}\mathrm{div}\vec {u}^{N})+4(\theta^{N})^{3}\theta_{r}^{k+1}-\overline{f_{r}^{k+1}}\\=&-\mathcal{L}_{3}(\rho^{N}, \vec{u}^{N}, \theta^{N}, f^{N})+(\theta_{r}^{k})^{4}+6(\theta_{r}^{k})^{2}(\theta^{N})^{2}+4(\theta_{r}^{k})^{3}(\theta^{N})
\end{aligned}
\end{equation}
and
\begin{equation}\label{reeq4-jiajia1}
\begin{aligned}
&\epsilon^{2}\partial_{t}f_{r}^{k+1}+\epsilon\vec{w}\cdot\nabla_{x}f_{r}^{k+1}+\epsilon^{2}(f_{r}^{k+1}-\overline{f_{r}^{k+1}})+f_{r}^{k+1}
-4(\theta^{N})^{3}\theta_{r}^{k+1}\\=&-\mathcal{L}_{4}(\rho^{N}, \theta^{N})-(\theta_{r}^{k})^{4}-6(\theta_{r}^{k})^{2}(\theta^{N})^{2}-4(\theta_{r}^{k})^{3}(\theta^{N}),
\end{aligned}
\end{equation}
with initial conditions
\begin{equation}\label{rein-jia1}
\rho_{r}^{k+1}(0, \vec x)=0,\ \ \ \vec{u}_{r}^{k+1}(0, \vec x)=0,\ \ \theta_{r}^{k+1}(0, \vec x)=0,\ \ f_{r}^{k+1}(0, \vec x, \vec w)=0,\ \ \mathrm{for}\ \ (\vec x, \vec w)\in\mathbb{T}^{3}\times\mathbb{S}^{2}.
\end{equation}
Define $V_{r}^{k}=(\rho_{r}^{k}, \vec{u}_{r}^{k}, \theta_{r}^{k})$, $\tilde V^{k}=(\tilde\rho^{k}, \tilde{\vec{u}}^{k}, \tilde\theta^{k})=(\rho^{N}+\rho_{r}^{k}, \vec{u}^{N}+\vec{u}_{r}^{k}, \theta^{N}+\theta_{r}^{k})$. Then, we can write \eqref{reeq1-jiajia1}-\eqref{reeq4-jiajia1} as follows
\begin{equation}\label{research equations-3-2}
E^{0}(\tilde{V}^{k})\frac{\partial {V}_{r}^{k+1}}{\partial t}+\sum_{j=1}^{3}E_{j}^{1}(\tilde{V^{k}})\frac{\partial {V}_{r}^{k+1}}{\partial x_{j}}+E^{2}(\tilde V^{k})V_{r}^{k+1}=\vec H^{k},
\end{equation}
and
\begin{equation}\label{reeq4-jiajia1-1}
\begin{aligned}
&\epsilon^{2}\partial_{t}f_{r}^{k+1}+\epsilon\vec{w}\cdot\nabla_{x}f_{r}^{k+1}+\epsilon^{2}(f_{r}^{k+1}-\overline{f_{r}^{k+1}})+f_{r}^{k+1}
-4(\theta^{N})^{3}\theta_{r}^{k+1}={R}_{4}(\rho^{N}, \theta^{N})-R,
\end{aligned}
\end{equation}
with initial conditions
\begin{equation}\label{rein-jia1}
V_{r}^{k+1}(0, \vec x)=0,\ \ f_{r}^{k+1}(0, \vec x, \vec w)=0,\ \ \mathrm{for}\ \ (\vec x, \vec w)\in\mathbb{T}^{3}\times\mathbb{S}^{2}.
\end{equation}
where $E^{0}(\tilde{V}^{k})$, $E_{j}^{1}(\tilde{V}^{k})$ and $E^{2}(\tilde V^{k})$ can be determined by $E^{0}(\tilde{V})$, $E_{j}^{1}(\tilde{V})$ and $E^{2}(\tilde V)$, respectively through substituting $\tilde{V}$ by $\tilde{V^{k}}$.

$\vec H^{k}$ has the similar structure of $\vec H$, defined as follows
\begin{equation}\nonumber
\vec H^{k}=(H_{1}^{k}, H_{2}^{k}, H_{3}^{k}, H_{4}^{k}, H_{5}^{k})^{t}
\end{equation} 
with
\begin{equation}\nonumber
H_{1}^{k}=\frac{1}{\tilde\rho^{k}}{R}_{1},\ \ H_{j}^{k}=\frac{1}{\tilde{\theta^{k}}\epsilon^{2}}\vec{R}_{2}^{j}+\frac{1}{\tilde{\theta}^{k}}\langle(\epsilon+\frac{1}{\epsilon})w_{j-1}(f_{r}^{k+1}-\overline{f_{r}^{k+1}})\rangle,\ \ j=2, 3, 4,
\end{equation}
\begin{equation}\nonumber
H_{5}^{k}=\frac{1}{(\tilde{\theta}^{k})^{2}\epsilon^{2}}({R}_{3}+\langle R\rangle)+\frac{1}{(\tilde{\theta}^{k})^{2}\epsilon^{2}}(4(\theta^{N})^{3}\theta_{r}^{k+1}-\overline{f_{r}^{k+1}}),
\end{equation}
where $R_{1}, \vec{R}_{2}, R_{3}, R_{4}$ are the same as before, and a little differently,
\begin{equation}\nonumber
R=(\theta_{r}^{k})^{4}+6(\theta_{r}^{k})^{2}(\theta^{N})^{2}+4(\theta_{r}^{k})^{3}(\theta^{N}).
\end{equation}

The above system defines a mapping $\mathcal{T}$ with $(V_{r}^{k+1}, f_{r}^{k+1})=\mathcal{T}((V_{r}^{k}, f_{r}^{k}))$.

\noindent\textbf{Step 2.} The contraction mapping. Define $V_{r}=(\rho_{r}, \vec{u}_{r}, \theta_{r})$. We consider the solution in the function space
\begin{equation}\nonumber
O_{q}:=\{(V_{r}, f_{r})\in L^{\infty}_{T}L^{2}_{\vec w}H^{3}_{\vec x}\times L^{\infty}_{T}H^{3}_{\vec x}: \|f_{r}\|_{L^{\infty}_{T}L^{2}_{\vec w}H^{3}_{\vec x}}+\|V_{r}\|_{L^{\infty}_{T}H^{3}_{\vec x}}\leq \epsilon^{q}, \|\partial_{t}\rho_{r}, \partial_{t}\theta_{r}\|_{L_{T}^{\infty}H_{\vec x}^{2}}\leq 1\},
\end{equation}
where $q>0$ is a constant to be chosen later.

First, we show $\mathcal{T}$ maps the space $O_{q}$ into itself. By \eqref{bd00} with $R_{1}=-\mathcal{L}_{1}(\rho^{N}, f^{N})$, $\vec{R}_{2}=-\mathcal{L}_{2}(\rho^{N}, \vec{u}^{N}, \theta^{N}, f^{N})$, ${R}_{3}=-\mathcal{L}_{3}(\rho^{N}, \vec{u}^{N}, \theta^{N}, f^{N})$, ${R}_{4}=-\mathcal{L}_{4}(\theta^{N}, f^{N})$ and
$R=-6(\theta^{N})^{2}(\theta_{r}^{k})^{2}-4\theta^{N}(\theta_{r}^{k})^{3}-(\theta_{r}^{k})^{4}$, the following estimate holds (assuming $q\geq 1$):
\begin{equation}\label{bd00-jia1}
\begin{aligned}
&\|f_{r}^{k+1}\|_{L_{T}^{\infty}H^{3}_{\vec x}L_{\vec w}^{2}}+\|V_{r}^{k+1}\|_{L_{T}^{\infty}H_{\vec x}^{3}}+\frac{1}{\epsilon^{2}}\|f_{r}^{k+1}-4(\theta^{N})^{3}\theta_{r}^{k+1}\|_{L_{T}^{2}L_{\vec w}^{2}H_{\vec x}^{3}}+\frac{1}{\epsilon}\|f_{r}^{k+1}-\overline{f_{r}^{k+1}}\|_{L_{T}^{2}L_{\vec w}^{2}H_{\vec x}^{3}}
\\\leq& \frac{C(T)}{\epsilon^{8}}\|(R_{1}, R_{2}, R_{3}, R_{4}, R)\|_{L_{T}^{2}L_{\vec w}^{2}H_{\vec x}^{3}}
\\\leq& \frac{C(T)}{\epsilon^{8}}\Big(\|6(\theta^{N})^{2}(\theta_{r}^{k})^{2}+4\theta^{N}(\theta_{r}^{k})^{3}
+(\theta_{r}^{k})^{4}\|_{L_{T}^{2}L_{\vec w}^{2}H_{\vec x}^{2}}+\|\mathcal{L}_{1}(\rho^{N}, \vec{u}^{N})\|_{L_{T}^{2}L_{\vec w}^{2}H_{\vec x}^{3}}\\&+\|\mathcal{L}_{2}(\rho^{N}, \vec{u}^{N}, \theta^{N}, f^{N})\|_{L_{T}^{2}L_{\vec w}^{2}H_{\vec x}^{3}}+\|\mathcal{L}_{3}(\rho^{N}, \vec{u}^{N}, \theta^{N}, f^{N})\|_{L_{T}^{2}L_{\vec w}^{2}H_{\vec x}^{3}}+\|\mathcal{L}_{4}(\theta^{N}, f^{N})\|_{L_{T}^{2}L_{\vec w}^{2}H_{\vec x}^{3}}\Big)
\\\leq& \frac{C(T)}{\epsilon^{8}}(\|\theta_{r}^{k}\|_{L_{T}^{\infty}H^{3}_{\vec x}}^{2}+\|\theta_{r}^{k}\|_{L_{T}^{\infty}H^{3}_{\vec x}}^{4}+\epsilon^{N+1})
\leq \frac{C(T)}{\epsilon^{8}}(\epsilon^{2q}+\epsilon^{N+1})
\leq C(T)(\epsilon^{2q-8}+\epsilon^{N-7}).
\end{aligned}
\end{equation}
Assuming $2q-8>q$ and $N-7>q$, i.e. $q>8$ and $N>7+q$, the above inequality implies
\begin{equation}\nonumber
\|f_{r}^{k+1}\|_{L_{T}^{\infty}L_{\vec w}^{2}H^{3}_{\vec x}}+\|V_{r}^{k+1}\|_{L_{T}^{\infty}H_{\vec x}^{3}}\leq C(T)(\epsilon^{2q-8}+\epsilon^{N-7})\leq \epsilon^{q},
\end{equation}
for sufficiently small $\epsilon$. For sufficiently small $\epsilon$, we can also verify that $\|\partial_{t}\rho_{r}^{k+1}, \partial_{t}\theta_{r}^{k+1}\|_{L_{T}^{\infty}H_{\vec x}^{2}}\\\leq C(\epsilon^{q}+\epsilon^{2q-2}+\epsilon^{N-1}+\epsilon^{q-2})\leq 1$. Thus we obtain that $(f_{r}^{k+1}, \theta_{r}^{k+1})\in O_{q}$ and therefore $\mathcal{T}$ maps $O_{q}$ into itself.

Next, we show the map $\mathcal{T}$ is a contraction mapping. Let $\varphi^{k+1}=f_{r}^{k+1}-f_{r}^{k}$, $h^{k+1}=V_{r}^{k+1}-V_{r}^{k}$, then they satisfy
\begin{equation}\label{reeq1-jia-new}
\begin{aligned}
&\epsilon^{2}\partial_{t}\varphi^{k+1}+\epsilon\vec{w}\cdot\nabla_{x}\varphi^{k+1}+\epsilon^{2}(\varphi^{k+1}
-\overline{\varphi^{k+1}})+\varphi^{k+1}
-4(\theta^{N})^{3}h^{k+1}\\=&6(\theta^{N})^{2}(\theta_{r}^{k}+\theta_{r}^{k-1})h^{k}
+4\theta^{N}((\theta_{r}^{k})^{2}+\theta_{r}^{k}\theta_{r}^{k-1}+(\theta_{r}^{k-1})^{2})h^{k}
+((\theta_{r}^{k})^{2}+(\theta_{r}^{k-1})^{2})(\theta_{r}^{k}+\theta_{r}^{k-1})h^{k},
\end{aligned}
\end{equation}
\begin{equation}\label{reeq2-jia-new}
\begin{aligned}
E^{0}(\tilde{V}^{k})\frac{\partial h^{k+1}}{\partial t}+\sum_{j=1}^{3}E_{j}^{1}(\tilde{V}^{k})\frac{\partial h^{k+1}}{\partial x_{j}}+E^{2}(\tilde V^{k})h^{k+1}=\vec H^{k}-\vec H^{k-1}+L,
\end{aligned}
\end{equation}
where
\begin{equation}\nonumber
L=-(E^{0}(\tilde{V}^{k})-E^{0}(\tilde{V}^{k-1}))\frac{\partial {V}_{r}^{k+1}}{\partial t}-(\sum_{j=1}^{3}E_{j}^{1}(\tilde{V}^{k})-E_{j}^{1}(\tilde{V}^{k-1}))\frac{\partial {V}_{r}^{k+1}}{\partial x_{j}}-(E^{2}(\tilde V^{k})-E^{2}(\tilde V^{k-1}))V_{r}^{k+1}
\end{equation}
with initial conditions
\begin{equation}\label{rein-jia-new}
\varphi^{k+1}(0, \vec x, \vec w)=0,\ \ h^{k+1}(0, \vec x)=0,\ \ \mathrm{for}\ \ (\vec x, \vec w)\in\mathbb{T}^{3}\times\mathbb{S}^{2}.
\end{equation}
From Sobolev's embedding theorem and Taylor's expansion theorem, we deduce that
\begin{equation}\nonumber
\|L\|_{L^{2}_{T}L^{2}_{\vec x}}\leq C(\epsilon^{N-1}+\epsilon^{2q-2})\|h^{k}\|_{L^{\infty}_{T}L^{2}_{\vec x}}
\end{equation}
and
\begin{equation}\nonumber
\|\vec H^{k}-\vec H^{k-1}\|_{L^{2}_{T}L^{2}_{\vec x}}\leq C(\epsilon^{N-1}+\epsilon^{2q-2})(\|\varphi^{k}\|_{L^{\infty}_{T}L^{2}_{\vec w}L^{2}_{\vec x}}+\|h^{k}\|_{L^{\infty}_{T}L^{2}_{\vec x}}).
\end{equation}
Multiplying \eqref{reeq1-jia-new} and \eqref{reeq2-jia-new} by $\varphi^{k+1}$ and $h^{k+1}$, respectively and integrating by parts in $L^{2}((0, T)\times\mathbb{T}^{3}\times\mathbb{S}^{2})$ and $L^{2}((0, T)\times\mathbb{T}^{3})$, we can derive
\begin{equation}\nonumber
\begin{aligned}
\|\varphi^{k+1}\|_{L_{T}^{\infty}L_{\vec w}^{2}L_{\vec x}^{2}}+\|h^{k+1}\|_{L_{T}^{\infty}L_{\vec x}^{2}}\leq C(\epsilon^{N-1}+\epsilon^{2q-2})(\|\varphi^{k}\|_{L^{\infty}_{T}L^{2}_{\vec w}L^{2}_{\vec x}}+\|h^{k}\|_{L^{\infty}_{T}L^{2}_{\vec x}}).
\end{aligned}
\end{equation}
Choosing small $\epsilon$ such that $0<C(\epsilon^{N-1}+\epsilon^{2q-2})<1$, we can derive that $\{f_{r}^{k}\}_{k=0}^{\infty}, \{V_{r}^{k}\}_{k=0}^{\infty}$ are contraction sequences. By the Banach fixed point theorem, there exist a unique fixed point $(f_{r}, V_{r})$ such that $(f_{r}, V_{r})=\mathcal{T}((f_{r}, V_{r}))$. Therefore, there exists a unique solution to \eqref{reeq1}-\eqref{rein} in $O_{q}$.

Taking $q=9$ and $N=17$, we can conclude that
\begin{equation}\nonumber
\|f_{r}\|_{L_{T}^{\infty}L_{\vec w}^{2}H^{3}_{\vec x}}+\|V_{r}\|_{L_{T}^{\infty}H^{3}_{\vec x}}\leq C\epsilon^{9}.
\end{equation}
In the following, we consider 
\begin{equation}\label{remainder equation-2}\left\{
\begin{split}
&\epsilon^{2}\partial_{t}f_{r}+\epsilon\vec{w}\cdot\nabla_{x}f_{r}+\epsilon^{2}(f_{r}-\overline{f_{r}})+f_{r}
=(\theta^{N}+\theta_{r})^{4}-(\theta^{N})^{4}-\mathcal{L}_{4}(f^{N}, \theta^{N})\ \ \mathrm{in} \ \ (0, T]\times \mathbb{T}^{3}\times \mathbb{S}^{2},\\
&f_{r}(0, \vec{x}, \vec{w})=0\ \ \mathrm{in}\ \ \mathbb{T}^{3}\times \mathbb{S}^{2}.\\
\end{split}\right.
\end{equation} 
According to Lemma \ref{main result}, we have
\begin{equation}\nonumber
\begin{aligned}
\|f_{r}\|_{L_{T}^{\infty}L_{\vec w}^{\infty}L^{\infty}_{\vec x}}\leq& C\|(\theta^{N}+\theta_{r})^{4}-(\theta^{N})^{4}-\mathcal{L}_{4}(f^{N}, \theta^{N})\|_{L_{T}^{\infty}L_{\vec w}^{\infty}L^{\infty}_{\vec x}}
\\\leq& C(\|\theta_{r}\|_{L_{T}^{\infty}L^{\infty}_{\vec x}}+\epsilon^{N+1})
\leq C(\|\theta_{r}\|_{L_{T}^{\infty}H^{3}_{\vec x}}+\epsilon^{N+1})
\leq C\epsilon^{9}.
\end{aligned}
\end{equation}
So, 
\begin{equation}\nonumber
\|f^{\epsilon}-\sum_{k=0}^{17}\epsilon^{k}(f_{k}+f_{I, k})\|_{L_{T}^{\infty}L_{\vec w}^{\infty}L^{\infty}_{\vec x}}
\\\leq C\epsilon^{9}
\end{equation}
and
\begin{equation}\nonumber
\|\theta^{\epsilon}-\sum_{k=0}^{17}\epsilon^{k}(\theta_{k}+\theta_{I, k})\|_{L_{T}^{\infty}H^{3}_{\vec x}}
\\\leq C\epsilon^{9}.
\end{equation}
Therefore, we derive
\begin{equation}\nonumber
\|f^{\epsilon}-\theta_{0}^{4}-f_{I, 0}\|_{L_{T}^{\infty}L_{\vec w}^{\infty}L^{\infty}_{\vec x}}
\\\leq C\epsilon
\end{equation}
and
\begin{equation}\nonumber
\|\theta^{\epsilon}-\theta_{0}-\theta_{I, 0}\|_{L_{T}^{\infty}H^{3}_{\vec x}}
\\\leq C\epsilon,
\end{equation}
combined with the fact $f_{0}=\overline{f_{0}}=\theta_{0}^{4}$.

We can also have 
\begin{equation}\nonumber
\|(\rho^{\epsilon}-\rho_{0}, \vec{u}^{\epsilon}-\vec{u}_{0})\|_{L_{T}^{\infty}H^{3}_{\vec x}}
\leq C\epsilon.
\end{equation}

The linear hyperbolic theory in \cite{cauchy-problem} implies $(\rho_{r}, \vec{u}_{r}, \theta_{r})\in C([0, T]; H^{3}(\mathbb{T}^{3}))$, then, we can derive \eqref{jielunbudengshi 1}, \eqref{jielunbudengshi 2} and \eqref{jielunbudengshi 3}.
\hfill$\Box$

\vskip 5mm

 \noindent{\bf Acknowledgments.} {Ju was partially supported by NSFC grants 12071044 and 12131007. Li was partially supported by the Scientic Research Project of Fuyang Normal University (No. 2024KYQD0106) and the Young Scientists Fund of Tianyuan Fund for Mathematics, NSFC grant 12526539. Zhang was partially supported by NSFC grants 12071044, 12271423 and 12671248.}

\end{document}